\def\rr{{\mathbb R}}
\def\rn{{{\rr}^n}}
\def\nn{{\mathbb N}}
\def\fz{\infty}
\def\az{\alpha}
\def\dist{{\mathop\mathrm{\,dist\,}}}
\def\loc{{\mathop\mathrm{\,loc\,}}}
\def\lip{{\mathop\mathrm{\,Lip}}}
\def\lz{\lambda}
\def\dz{\delta}
\def\ez{\epsilon}
\def\gz{{\gamma}}
\def\boz{{\Omega}}
\def\wz{\widetilde}
\def\ls{\lesssim}
\def\gs{\gtrsim}
\def\bint{{\ifinner\rlap{\bf\kern.35em--}
\int\else\rlap{\bf\kern.45em--}\int\fi}\ignorespaces}
\def\bbint{{\ifinner\rlap{\bf\kern.35em--}
\hspace{0.078cm}\int\else\rlap{\bf\kern.45em--}\int\fi}\ignorespaces}
\def\esssup{{\rm \,esssup\,}}
\def\esup{\mathop\mathrm{\,esssup\,}}
\def\r{\right}
\def\lf{\left}
\newtheorem{thm}{Theorem}[section]
\newtheorem{lem}[thm]{Lemma}
\newtheorem{prop}[thm]{Proposition}
\newtheorem{rem}[thm]{Remark}
\newtheorem{cor}[thm]{Corollary}
\numberwithin{equation}{section}
\begin{document}

\arraycolsep=1pt

\title{\Large\bf
Intrinsic Geometry and Analysis of Diffusion Processes and $L^\fz$-Variational Problems
\footnotetext{\hspace{-0.35cm}
\noindent  {\it Key words and phases:}  Dirichlet form, Diffusion process, intrinsic distance,
 differential structure, $L^\fz$-variational problem, absolute minimizer
\endgraf Pekka Koskela was supported by the Academy of Finland
grant 131477. Nageswari Shanmugalingam was partially supported by grant \#200474 from
the Simons Foundation and by NSF grant \# DMS-1200915.
Yuan Zhou was supported by
Program for New Century Excellent Talents in University of Ministry of Education of China
and National Natural Science Foundation of China (Grant No. 11201015).
}}
\author{Pekka Koskela, Nageswari Shanmugalingam
and Yuan Zhou
}
\date{ }
\maketitle

\begin{center}
\begin{minipage}{13.5cm}\small
{\noindent{\bf Abstract}\quad
The aim of this paper is two-fold:

\quad
First, we obtain a better understanding of the intrinsic distance of
diffusion processes.  Precisely,
(i) for all $n\ge1$,
  the diffusion matrix $A$ is weak upper semicontinuous on $\boz$
if and only if the intrinsic differential and the local intrinsic distance
structures coincide;
(ii) if $n=1$, or if $n\ge2$ and $A$ is weak upper semicontinuous on $\boz$,
 the intrinsic distance and differential structures always coincide;
 (iii) if $n\ge2$ and $A$ fails to be weak upper semicontinuous on $\boz$,
the (non-) coincidence  of  the intrinsic distance and differential structures
depend on the geometry of the non-weak-upper-semicontinuity set of $A$.

\quad Second, for an arbitrary diffusion matrix $A$,
we show that the intrinsic distance completely  determines  the absolute
minimizer of the corresponding $L^\fz$-variational problem, and then
 obtain the  existence and uniqueness for given boundary data.
  We also give an example of a diffusion matrix $A$ for which there is an
absolute minimizer
that is not of class $C^1$.
When $A$ is  continuous, we also obtain the  linear approximation property  of
the absolute minimizer.
}
\end{minipage}
\end{center}

%
%
%
%
%
%
%

\tableofcontents
\contentsline{section}{\numberline{ } References}{36}

\section{Introduction\label{s1}}

Let $\boz\subset\rr^n$ be a domain (connected open subset).
Denote by $\mathscr A(\boz)$ the collection of all matrix-valued measurable
maps
$A=(a_{ij})_{1\le i,\,j\le n}:\boz\to\rr^{n\times n}$,
which are {\it elliptic}, that is,
for each $A\in\mathscr A(\boz)$, there exists a continuous function
$\lz:\boz\to[1,\,\fz)$ such that
\begin{equation}\label{e1.1}
\frac1{\lz(x)}|\xi|^2\le \langle A(x)\xi,\,\xi\rangle\le \lz(x)|\xi|^2
\end{equation}
for almost all $x\in\boz$ and all $\xi\in\rr^n$, where
$$\langle A(x)\xi,\,\xi\rangle=\sum_{i,\,j=1}^n\xi_ia_{ij}\xi_j.$$
An {\it Hamiltonian} associated to $A$  is given by
$H(x,\,\xi)=\langle A(x)\xi,\,\xi\rangle$.

Associated to each diffusion matrix $A\in\mathscr A(\boz)$,
there is a  ``Riemannian metric" (differential structure) on $\boz$:
for all $ x\in\boz$ and for each vector $\xi\in T_x\boz$,
the length of $\xi$ is given by $\sqrt{H(x,\,\xi)}$.
The corresponding differential operator $L_Au={\rm div}(A\nabla u)$ generates a
regular, strongly local bilinear Dirichlet energy form $\mathscr E_A $ with
domain
$\mathscr D(\mathscr E_A)$ in $L^2(\boz)$.
Notice that $C^\fz_c(\boz)$ is a core of $\mathscr E_A $,
$\mathscr D_\loc(\mathscr E_A)=W^{1,\,2}_\loc(\boz)$, and for all
$f,h\in \mathscr D(\mathscr E_A)$,
\[
\mathscr E_A(f,h)=\int_\boz\langle A(x)\nabla f(x),\nabla h(x)\rangle\, dx.
\]
For the details see for example \cite{fot}.
Moreover, the {\it intrinsic distance $d_A$} associated   to $A$ is defined by
$$d_A(x,\,y)=\sup\{u(x)-u(y)\}$$
for all $x,\,y\in \boz$,
where the supremum is taken over all $u\in  C(\boz)\cap W_\loc^{1,\,2}(\boz)$
such that
$H(x,\,\nabla u(x))\le 1$  almost everywhere. The ellipticity implies that
$d_A$ is locally comparable to the Euclidean distance.
We define the {\it pointwise Lipschitz constant} by setting
$$\lip_{d_A} u(x)=\limsup_{y\to x}\frac{|u(y)-u(x)|}{d_A(x,\,y)},$$
and its {\it local variant} $$|Du|_{d_A}(x)= \lim_{r\to0} \lip_{d_A}(u,\,B(x,\,r)),$$  where and in what follows
$$\lip_{d_A}(u,\,K)=\sup_{x,\,y\in K, y\ne x}\frac{|u(y)-u(x)|}{d_A(x,\,y)}.$$
Then $\lip_{d_A}(K)$ denotes the collection of all $u$ with
$\lip_{d_A}(u,\,K)<\fz$.
When $A=I_n$, $d_A$ is the Euclidean distance, and it  is always omitted in
the above notation.

Motivated by the work of Norris \cite{n97}, who showed
that the intrinsic distance  determines the small time asymptotics
of heat kernel, Sturm \cite{s97} asked the following question:
Is a diffusion process determined by the intrinsic distance?
In other words, do the  intrinsic differential and distance structures
coincide in the sense that for
$u\in \lip_{d_A}(\boz)$, $H(x,\,\nabla u(x))=(\lip_{d_A}u(x))^2$
almost everywhere?

The answer to this question is not always in the positive
as shown by Sturm's construction  \cite[Theorem 2]{s97}:
for each $A\in\mathscr A(\boz)$, there exists a $\wz A\in \mathscr A(\boz)$
with $d_{\wz A}=d_A$ but
$$\langle \wz A(x)\xi,\,\xi\rangle<\langle A(x)\xi,\,\xi\rangle$$
for all $\xi\in\rn\setminus\{0\}$;
see also \cite{kz} for a different example.
On the other hand, with the additional assumption that $A$ is continuous,
Sturm \cite[Proposition 4]{s97} proved that
the intrinsic differential and distance structures coincide.

The first aim of this paper is to obtain a better understanding on the
properties of $A$
that determine  the (non-)coincidence of intrinsic differential and distance
structures.
It turns out that weak upper semicontinuity  plays a critical role.
A function $u$ is said to be {\it weak upper semicontinuous}  at $x\in\boz$ if
there exists a set $E$ with $|E|=0$ such that
$$u(x)\ge\limsup_{(\boz\setminus E)\ni y\to x}u(y),$$
and is said to be weak upper semicontinuous  on $\boz$ if it is weak upper
semicontinuous at almost all
$x\in\boz$.
A diffusion matrix  $A$  is said to be \emph{weak upper semicontinuous} at
$x\in\boz$ (resp. on $\boz$)
if for every $\xi\in S^{n-1}$,
$\langle A(\cdot)\xi,\xi\rangle$ is weak upper semicontinuous at $x$
(resp. at almost all  $x\in\boz$).
Denote by ${\mathscr A}_{\rm wusc}(\boz)$
the collection of all $A\in\mathscr A(\boz)$
that are  weak upper semicontinuous on $\boz$.
We prove the following results.

\begin{enumerate}
\vspace{-0.3cm}
\item[(i)] For all $n\ge1$,
  the diffusion matrix $A$ belongs to $\mathscr A_{\rm wusc}(\boz)$
if and only if the intrinsic differential and the local intrinsic distance
structures coincide in the sense that
for all $u\in C^1_\loc(\boz)$, $|Du|^2_{d_A}(x)= H(x,\,\nabla u(x)) $ almost
everywhere; see Theorem \ref{t2.3}.

\vspace{-0.3cm}

\item[(ii)] If $n=1$, or if $n\ge2$ and $A\in\mathscr A_{\rm wusc}(\boz)$, then
 the intrinsic distance and differential structures always coincide, that is,
 for all $u\in\lip(\boz)$,
 $(\lip_{d_A}u(x))^2= H(x,\,\nabla u(x)) $ almost everywhere; see Theorems
\ref{t2.1} and \ref{t2.2}.

\vspace{-0.3cm}
\item[(iii)] If $n\ge2$ and $A\notin\mathscr A_{\rm wusc}(\boz)$,
the (non-) coincidence  of  the intrinsic distance and differential structures
depend on the geometry of the non-weak-upper-semicontinuity set of $A$.
Indeed, we construct two examples via a large Cantor set and a large
Sierpinski carpet to show that both coincidence and noncoincidence  may happen;
see, respectively,  Theorem \ref{t3.1} and Proposition \ref{p3.1}.
\end{enumerate}

The proofs of Theorems \ref{t2.2} and \ref{t2.3} rely on the (key)
Lemmas \ref{l2.1} and
 \ref{l2.2}.
The proof of Theorem \ref{t3.1} is more intricate;
we use an approximation of the distance by Norris \cite{n97} to derive
some careful estimates on a good set of the distance function based
on geometric properties of our Sierpinski carpet.
Proposition \ref{p3.1} uses the geometry of the complement of our
large Cantor set.

We also consider the $L^\fz$-variational problem associated with an
arbitrary matrix-valued map $A\in\mathscr A(\boz)$:
the goal is to study the local minimizers of the functional
$$F(u;U)=\esssup_{x\in U} H(x,\,\nabla u(x))$$
over the class of Lipschitz functions on $U\Subset\boz$ with a given boundary
data.
This study was initiated by Aronsson \cite{a1,a2,a3,a4} in the case
$H(x,\,\xi)=|\xi|^2$, that is, $A=I_n$.
He introduced the idea of  absolute minimizer, that is, minimize  $F$ on all
open subset of $U$.
To be precise, let $U$ be an open subset such that $\overline U\subset \boz$.
A function $u\in \lip(U)$ is said to be an
{\it absolute  minimizer  for $H$} on $U$ if for every open subset
$V\Subset U$  and $v\in \lip(V)\cap C(\overline V)$ with
  $u|_{\partial V}= v|_{\partial V}$, we have
$$\esssup_{x\in V} H(x, \nabla u(x)) \le {\esssup}_{x\in V} H(x, \nabla v(x)).$$
Moreover, given a function $f\in \lip (\partial U)$,  $u\in \lip(U)$ is said
to be an
 {\it absolutely minimizing Lipschitz extension } of $f$ if $u$ is an
absolute  minimizer  for $H$
and $u|_{\partial U}=f$.
In recent years, the study of the $L^\fz$-variational problem, even
for more general Hamiltonians but with some smoothness,
has advanced significantly; see \cite{acj} for a survey and \cite{ceg,j93} for
some seminal works.
The $L^\fz$-variational problem is still interesting even if the Hamiltonian
is not smooth or even continuous.
See for example \cite{acj,gpp,cp,cpp} and the reference therein.
In this case, one cannot always derive an Aronsson equation from the
$L^\fz$-variational problem.

Our results concerning absolute minimizer are as follows:
\begin{enumerate}
\vspace{-0.3cm}
\item[(iv)]
  For arbitrary $A\in\mathscr A(\boz)$ and the Hamiltonian
$H(x,\,\xi)=\langle A(x)\xi,\,\xi\rangle$,
we show that the absolute minimizer is completely determined by the intrinsic
distance, and then obtain the existence and uniqueness of the absolute
minimizer  given a boundary data;
see  Theorem \ref{t4.1}.
Consequently, if $A,\,\wz A\in\mathscr A(\boz)$ and $d_{A}=d_{\wz A}$,
then  given the boundary data, the absolute minimizers associated to $A$ and
$\wz A$ coincide.
\vspace{-0.3cm}
\item[(v)]
Associated to the diffusion matrix $A\notin\mathscr A_{\rm wusc} (\rn)$
given in Subsection 3.2, we show in Proposition \ref{c4.3} that there is an
absolute minimizer $u$ on $(0,\,1)^n$ which fails to be $C^1$.
 This example indicates that perhaps weak upper semicontinuity of $A$ is
needed in order for the corresponding absolute minimizers to be of class $C^1$.

\vspace{-0.3cm}
\item[(vi)]
{We obtain  in Theorem \ref{t5.1} the linear approximation property of the
absolute minimizer
at all points of continuity of $A$, and hence  at  all points when $A$ is
continuous on $\boz$.}
\end{enumerate}

The proof of Theorem \ref{t4.1} relies on the (crurial) Lemma \ref{l2.1} and
Lemma \ref{l4.6},
which allows us to describe the absolute minimzer via the pointwise Lipschitz
constant.
Then the existence of the absolute minimizer follows from \cite{jn},
while the uniqueness will be proved following the idea of \cite{as}
(see \cite{pssw} for an earlier proof via the tug of war).
 Proposition \ref{c4.3} follows from Theorem \ref{t3.1} and
properties of absolute minimizers.
The proof of Theorem \ref{t5.1} borrows the blow-up ideas of \cite{ceg}, but
due to the change of distance in the blow-up process,
a detailed study is necessary.

The $C^1$-regularity of the absolute minimizer is still open except for
the case $n=2$ and $A=I_n$.
Precisely, if $A=I_n$, Savin \cite{s05} obtained the $C^1$-regularity  of the
absolute minimizer when $n=2$  (see also
\cite{wy} for a homogeneous norm and \cite{es})
while Evans-Smart \cite{es12} obtained the everywhere differentiability
when $n\ge 3$.
All the proofs in \cite{s05,es,es12,wy} rely on the linear approximation
property;
indeed, controlling the convergence of different  sequences appearing in
the linear approximation.
For an arbitrary continuous or even $C^1$-continuous  $A$,
we do not know if it is possible to obtain the
everywhere differentiability
by controlling the linear approximation process provided in Theorem \ref{t5.1}
as done in \cite{s05,es,es12,wy}.

Finally, we state some {\it conventions}. Throughout the paper,
we denote by $C$ a {\it positive
constant} which is independent
of the main parameters, but which may vary from line to line.
Constants with subscripts, such as $C_0$, do not change
in different occurrences. The {\it notation} $A\ls B$ or $B\gs A$
means that $A\le CB$. If $A\ls B$ and $B\ls A$, we then
write $A\sim B$.
Denote by  $\nn$ the {\it set of positive integers}.
If $V$ is   a bounded  open set with $\overline V\subset U$, we simply write $V\Subset U$.
We use $C(\boz)$ to denote the continuous function on $\boz$
while $C^1(\boz)$  the  function with continuous gradient on $\boz$.
For any locally integrable function $f$,
we denote by $\bbint_E f\,d\mu$ the {\it average
of $f$ on $E$}, namely, $\bbint_E f\,d\mu\equiv\frac 1{\mu(E)}\int_E f\,d\mu$.

\section{Case $n=1$ or $A\in\mathscr A_{\rm wusc}(\boz)$:  $H(\cdot,\,\nabla u )=(\lip_{d_A}u )^2$}\label{s2}

We first show that if $n=1$, or if $n\ge2$ and $A\in\mathscr A_{\rm wusc}(\boz)$,
then the intrinsic distance and differential structures always coincide in
the sense that
for all   $u\in\lip(\boz)$, $(\lip _{d_A}u(x))^2=H(x,\,\nabla u(x))$ almost
everywhere; see Theorems \ref{t2.1} and \ref{t2.2}.
Then, for all $n\ge1$, we prove that $A\in\mathscr A_{\rm wusc}(\boz)$
if and only if the intrinsic differential and the local intrinsic distance
structures coincide in the sense that
for all   $u\in C^1_\loc(\boz)$, $|Du|^2_{d_A}(x)=H(x,\,\nabla u(x))$ almost
everywhere; see Theorem \ref{t2.3}.

\begin{thm}\label{t2.1}
If $n=1$, then for all $u\in\lip_{d_A}(\boz)$,
$\lip_{d_A}u=\sqrt A|u'|$ almost everywhere.
\end{thm}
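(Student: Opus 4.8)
The plan is to exploit the one-dimensional change of variables that ``straightens out'' the metric. Fix a base point $x_0\in\boz$ and set $\Phi(t)=\int_{x_0}^t A(s)^{-1/2}\,ds$ for $t\in\boz$. When $n=1$ the ellipticity condition \eqref{e1.1} reads $1/\lz(x)\le A(x)\le\lz(x)$ with $\lz$ continuous and $\ge 1$, so $1/\sqrt{\lz}\le A^{-1/2}\le\sqrt{\lz}$ almost everywhere; in particular $A^{-1/2}$ is locally bounded and locally bounded away from zero. Consequently $\Phi$ is continuous, strictly increasing and locally bi-Lipschitz on $\boz$, and differentiable almost everywhere with $\Phi'=A^{-1/2}$ by the Lebesgue differentiation theorem.

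The first step is to compute the intrinsic distance explicitly: I would show that $d_A(x,y)=|\Phi(x)-\Phi(y)|$ for all $x,y\in\boz$. For ``$\ge$'', note that both $\Phi$ and $-\Phi$ lie in $C(\boz)\cap W^{1,\,2}_\loc(\boz)$ and satisfy $H(\cdot,\Phi')=A(\cdot)(\Phi')^2=1$ a.e., so $\pm\Phi$ is admissible in the supremum defining $d_A$, whence $\pm(\Phi(x)-\Phi(y))\le d_A(x,y)$. For ``$\le$'', any admissible competitor $v$ satisfies $|v'|\le A^{-1/2}$ a.e.\ and, being continuous and in $W^{1,\,2}_\loc(\boz)$, is absolutely continuous on the closed subinterval of $\boz$ joining $x$ and $y$; hence $|v(x)-v(y)|\le\int|v'|\le|\Phi(x)-\Phi(y)|$, and taking the supremum over all such $v$ gives the reverse inequality.

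With the formula for $d_A$ in hand I would conclude by differentiation. Fix $u\in\lip_{d_A}(\boz)$. Since $\Phi$ is locally bi-Lipschitz and $d_A(x,y)=|\Phi(x)-\Phi(y)|$, the function $u$ is locally Lipschitz on $\boz$ in the Euclidean sense, hence differentiable almost everywhere by Rademacher's theorem. Let $E$ be the set of full measure consisting of those $x\in\boz$ at which $u$ is differentiable and $\Phi$ is differentiable with $\Phi'(x)=A(x)^{-1/2}\in(0,\fz)$. For $x\in E$ and $y\to x$, writing $u(y)-u(x)=u'(x)(y-x)+o(|y-x|)$ and $\Phi(y)-\Phi(x)=\Phi'(x)(y-x)+o(|y-x|)$ with $\Phi'(x)>0$, one obtains
\[
\frac{|u(y)-u(x)|}{d_A(x,y)}=\frac{|u(y)-u(x)|}{|\Phi(y)-\Phi(x)|}\longrightarrow\frac{|u'(x)|}{\Phi'(x)}=\sqrt{A(x)}\,|u'(x)|,
\]
so $\lip_{d_A}u(x)=\sqrt{A(x)}\,|u'(x)|$ for a.e.\ $x\in\boz$, which is precisely the claim.

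This statement is the easy base case and I do not expect a genuine obstacle; the only points requiring care are (a) verifying both inequalities in the distance formula, in particular that a continuous $W^{1,\,2}_\loc$ competitor is genuinely absolutely continuous on compact subintervals so that the fundamental theorem of calculus applies, and (b) the bookkeeping ensuring that $u$ and $\Phi$ are differentiable off a common Lebesgue-null set, which is immediate from $\Phi$ being locally bi-Lipschitz, since such a map carries and pulls back null sets to null sets.
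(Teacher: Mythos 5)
Your proof is correct. Both you and the paper ultimately hinge on the same auxiliary function (your $\Phi(t)=\int_{x_0}^t A^{-1/2}$, the paper's $w(z)=\int_x^z A^{-1/2}$), but you organize the argument differently and, in my view, more transparently. The paper never writes down an exact formula for $d_A$: it proves the two inequalities $\lip_{d_A}u\le\sqrt A\,|u'|$ and $\lip_{d_A}u\ge\sqrt A\,|u'|$ separately, each via an asymptotic $\limsup$ estimate on $|x-y|/d_A(x,y)$ at Lebesgue points of $A^{-1/2}$, with the attendant $\varepsilon$-bookkeeping and a product-of-limsups step. You instead prove the global identity $d_A(x,y)=|\Phi(x)-\Phi(y)|$ outright, valid for \emph{all} $x,y$ with no Lebesgue-point restriction; this exhibits $(\Omega,d_A)$ as isometric to an interval under $\Phi$ and reduces the theorem to a single differentiation of a quotient at points where $u$ and $\Phi$ are simultaneously differentiable. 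The two nontrivial ingredients you flag — that a continuous $W^{1,2}_{\loc}$ competitor on an interval is absolutely continuous on compacts (so the fundamental theorem of calculus applies), and that $\Phi$, being locally bi-Lipschitz with $\Phi'=A^{-1/2}$ a.e., pulls $\lip_{d_A}$ back to the Euclidean Lipschitz constant while preserving null sets — are exactly the right points to check, and both hold. Net effect: your route proves a slightly stronger intermediate fact (the exact distance formula) and avoids the paper's double $\limsup$ machinery; the paper's route is a touch more self-contained at each step but less structurally revealing.
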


\begin{proof}
By the continuity of $\lambda$ associated with the ellipticity condition of $A$,
we have $\text{Lip}_{d_A}(U)=\text{Lip}(U)$ for $U\Subset\Omega$.
To prove  that $\lip_{d_A}u(x)\le\sqrt{A(x)}|u'(x)|$
for almost all $x\in\Omega$, notice that
\begin{eqnarray*}
\lip_{d_A}u(x)&&=\limsup_{y\to x}\frac{|u(y)-u(x)|}{d_A(x,\,y)}\\
&&\le  \limsup_{y\to x}\frac{|u(y)-u(x)|}{|x-y|}\limsup_{y\to x}\frac{|x-y|} {d_A(x,\,y)}\\
&&=|u'(x)|\limsup_{y\to x}\frac{|x-y|} {d_A(x,\,y)}.
\end{eqnarray*}
Here we used the Rademacher theorem, according to which locally Lipschitz
continuous
functions on $\mathbb{R}$ are
differentiable almost everywhere.
Thus it suffices to check that for almost all $x\in\boz$,
\begin{equation}\label{e3.x2}
\limsup_{y\to x}\frac{|x-y|} {d_A(x,\,y)}\le \sqrt{A(x)}.
\end{equation}
This is reduced to showing that, for any $\ez>0$,
there exists a constant $\dz>0$ and a Lipschitz
continuous function $w$ such that $A(x)|w'(x)|^2\le1$
and for all $y\in(x-\dz,\,x+\dz)$,
\begin{equation}\label{e3.x3}
|y-x|\le (1+\ez)\sqrt{A(x)}|w(y)-w(x)|.
\end{equation}
Indeed, from this and the definition of $d_A$, we know that
\[
\sqrt{A(x)}\,d_A(x,\,y)\ge \sqrt{A(x)}\,|w(y)-w(x)|\ge \frac{1}{1+\ez}|y-x|,
\]
which implies  \eqref{e3.x2} by the arbitrariness  of $\ez$. Towards
\eqref{e3.x3}, take $$w(z)= \int_{x}^z\frac{1}{\sqrt{A(s)}}\,ds$$
for $z\in\boz$.
Notice that the lower bound of $A$ guarantees that
$\frac{1}{\sqrt{A}}\in L^1_\loc(\boz)$, and so
$w^\prime(z)=A(z)^{-1/2}$ for almost all $z\in\mathbb{R}$.
Let $I_{x,\,y}=[x,\,y]$ if $x<y$ ($[y,\,x]$ if $x>y$).
By Lebesgue's differentiation theorem, for almost all $x\in\boz$, we can
find $\dz>0$ such that
  whenever $y\in(x-\dz,\,x+\dz)$,
 $$ \frac{|w(y)-w(x)|}{|x-y|}=\bint_{I_{x,\,y}}\frac{1}{\sqrt{A(s)}}\,ds\ge\frac{1}{(1+\ez)\sqrt{A(x)}},$$
which implies \eqref{e3.x3}.

On the other hand, to prove $\sqrt{A(x)}|u'(x)|\le \lip_{d_A}u(x)$ for almost
all $x\in\boz$,
observe that at points $x$ of differentiability of $u$ (by the classical
Rademacher's theorem,
almost every $x$ is such a point),
\begin{eqnarray*}
|u'(x)| &&=\lim_{y\to x}\frac{|u(y)-u(x)|}{|x-y|}\\
&&\le  \limsup_{y\to x}\frac{|u(y)-u(x)|}{d_A(x,\,y)}\limsup_{y\to x}\frac{d_A(x,\,y)}{|x-y|}\\
&&=\lip_{d_A}u(x)\limsup_{y\to x}\frac{d_A(x,\,y)}{|x-y|}.
\end{eqnarray*}
By the definition of $d_A$, for any $\ez>0$ and any fixed $y$, there exists a
function $v$ such that $A(z)|v'(z)|^2\le1$ for almost all $z\in\boz$ and
$d_A(x,\,y)\le(1+\ez)|v(x)-v(y)|$,
which implies that
$$ \frac{d_A(x,\,y)}{|x-y|}\le(1+\ez) \frac{|v(x)-v(y)|}{|x-y|}\le
\frac{(1+\ez)}{|x-y|} \int_{I_{x,\,y}}|v'(s)|\,ds \le \frac{(1+\ez)}{|x-y|} \int_{I_{x,\,y}}\frac1{\sqrt{A(s)}}\,ds
.$$
If $x$ is a Lebesgue point of $\frac1{\sqrt{A}}$,
there exists a $\dz>0$ such that whenever $y\in(x-\dz,\,x+\dz)$,
$$\frac{d_A(x,\,y)}{|x-y|}\le (1+\ez) ^2\frac1{\sqrt{A(x)}},$$
which is as desired.
\end{proof}

 \begin{thm}\label{t2.2}
If $n\ge2$ and   $A\in\mathscr A_{\rm wusc}(\boz)$,
then  the intrinsic distance and differential structures coincide. That is,
given Lipschitz function $u$ on $\boz$ {\rm(}with respect to the
Euclidean metric\rm{)},
for almost every $x\in\boz$
we have
\[
    (\text{Lip}_{d_A} u(x))^2=\langle A(x)\nabla u(x),\nabla u(x)\rangle.
\]
 \end{thm}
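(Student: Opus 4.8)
The plan is to prove the two inequalities separately, as in the one-dimensional case, but now the geometry of $\rn$ forces us to work with curves rather than intervals. First I would reduce to a statement about the asymptotic comparison of $d_A$ with the ``infinitesimal Riemannian'' distance. Since $A$ is elliptic with continuous $\lambda$, we have $\lip_{d_A}(U)=\lip(U)$ for $U\Subset\boz$, so $u$ is $d_A$-Lipschitz, and by Rademacher's theorem $u$ is differentiable at a.e.\ $x\in\boz$; fix such an $x$ that is moreover a Lebesgue point of $A$ (and of $\langle A(\cdot)\xi,\xi\rangle$ for $\xi$ in a countable dense set). I expect the key technical tool to be supplied by Lemmas \ref{l2.1} and \ref{l2.2} mentioned in the introduction: these should give, at a.e.\ point $x$, and for each $\ez>0$, a $\delta>0$ such that for $y\in B(x,\delta)$ one has a two-sided comparison of $d_A(x,y)$ with $\inf_\gamma\int_\gamma\sqrt{H(z,\dot\gamma)}$ over Euclidean-short curves $\gamma$ from $x$ to $y$, up to the factor $(1+\ez)$. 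The weak upper semicontinuity of $A$ is exactly what makes this comparison hold from \emph{above} as well as below (a lower bound being essentially automatic from ellipticity and Fatou-type arguments).

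For the inequality $(\lip_{d_A}u(x))^2\le\langle A(x)\nabla u(x),\nabla u(x)\rangle$, I would argue as follows. Write $p=\nabla u(x)$. Using the differentiability of $u$ at $x$, along any curve $\gamma$ from $x$ to $y$ we have $|u(y)-u(x)|\le|\la p,y-x\ra|+o(|y-x|)$, but more usefully $|u(y)-u(x)|$ is controlled by $\int_\gamma |p|\,$ only crudely; instead one wants the sharp bound, which comes from choosing $\gamma$ to be a $d_A$-geodesic-like curve. The honest route: by definition of $d_A$ and the existence of competitors $w$ with $H(z,\nabla w(z))\le1$ a.e., for each $\ez$ and each $y$ near $x$ pick $w$ with $d_A(x,y)\le(1+\ez)|w(x)-w(y)|$. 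Then I would combine the infinitesimal structure of $d_A$ at $x$ — which by weak upper semicontinuity is governed by the \emph{dual} norm $\sqrt{\la A(x)^{-1}\eta,\eta\ra}$ via Lemma \ref{l2.1}/\ref{l2.2} — with the elementary Cauchy–Schwarz-type identity $|\la p,v\ra|\le\sqrt{\la A(x)p,p\ra}\,\sqrt{\la A(x)^{-1}v,v\ra}$ to conclude $|u(y)-u(x)|\le(1+C\ez)\sqrt{\la A(x)p,p\ra}\,d_A(x,y)+o(|y-x|)$. Dividing by $d_A(x,y)$ and letting $y\to x$ then $\ez\to0$ gives the bound.

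For the reverse inequality $\la A(x)\nabla u(x),\nabla u(x)\ra\le(\lip_{d_A}u(x))^2$, I would exploit the differentiability of $u$ at $x$ in the direction $\xi$ that (almost) realizes $\sqrt{\la A(x)p,p\ra}$. Concretely, choose $v$ with $\la p,v\ra=\sqrt{\la A(x)p,p\ra}$ and $\la A(x)^{-1}v,v\ra=1$. Take $y=x+tv$ with $t\downarrow0$: then $u(y)-u(x)=t\la p,v\ra+o(t)=t\sqrt{\la A(x)p,p\ra}+o(t)$, while the upper estimate on $d_A$ near $x$ (the part of the distance comparison that \emph{uses} weak upper semicontinuity, via the explicit competitor built in Lemma \ref{l2.2}) gives $d_A(x,x+tv)\le(1+\ez)t\sqrt{\la A(x)^{-1}v,v\ra}=(1+\ez)t$ for $t$ small. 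Hence $\dfrac{|u(y)-u(x)|}{d_A(x,y)}\ge\dfrac{\sqrt{\la A(x)p,p\ra}+o(1)}{1+\ez}$, and taking $t\to0$ and $\ez\to0$ finishes. The main obstacle, and the place where all the real content sits, is justifying this one-sided ``upper'' comparison $d_A(x,x+tv)\le(1+\ez)t\sqrt{\la A(x)^{-1}v,v\ra}$ at a.e.\ $x$ — equivalently, producing a single $d_A$-admissible function $w$ near $x$ whose increments along the segment $x+tv$ are nearly maximal; this is precisely where weak upper semicontinuity (rather than mere measurability) of $\langle A(\cdot)\xi,\xi\rangle$ is indispensable, and it is handled by the key Lemmas \ref{l2.1} and \ref{l2.2}. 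Everything else is Rademacher, Lebesgue points, Cauchy–Schwarz, and bookkeeping of the $\ez$'s.
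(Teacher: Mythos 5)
Your outline of the hard direction, $(\lip_{d_A}u(x))^2\le H(x,\nabla u(x))$, is essentially the paper's argument: differentiability of $u$ at $x$, Cauchy--Schwarz for the quadratic form, and a lower bound $d_A(x,y)\ge(1-C\ez)\sqrt{\langle A(x)^{-1}(y-x),y-x\rangle}$ near $x$, with the lower bound coming from weak upper semicontinuity (wusc gives $A(z)\le(1+\ez)A(x)$ a.e.\ near $x$, hence $d_{\wz A}\le d_A$ for the constant comparison matrix $\wz A=(1+\ez)A(x)$, and Lemma~\ref{l2.2} computes $d_{\wz A}$ on linear functions). This is the right idea, though you should cite Lemma~\ref{l2.2} rather than Lemma~\ref{l2.1} here: Lemma~\ref{l2.1} gives the \emph{opposite} inequality $H\le(\lip_{d_A})^2$ and is not a $d_A$-lower-bound tool.

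The genuine gap is in your treatment of the reverse inequality $H(x,\nabla u(x))\le(\lip_{d_A}u(x))^2$. You assert that the required \emph{upper} bound $d_A(x,x+tv)\le(1+\ez)t\sqrt{\langle A(x)^{-1}v,v\rangle}$ is ``precisely where weak upper semicontinuity\dots is indispensable,'' produced by ``a single $d_A$-admissible function $w$ near $x$ whose increments along the segment are nearly maximal.'' Both claims run in the wrong direction. Producing an admissible $w$ with large increments gives a \emph{lower} bound on $d_A$, since $d_A(x,y)\ge w(x)-w(y)$; an upper bound requires controlling the increments of \emph{every} admissible competitor, for which you would need something like $A(z)\ge(1-\ez)A(x)$ near $x$ — a weak \emph{lower} semicontinuity statement, which wusc does not provide. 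Indeed, wusc enlarges the admissible class near $x$ (more functions satisfy $H(\cdot,\nabla w)\le1$), making $d_A$ \emph{larger}, never smaller. In the paper this inequality does not use wusc at all: it is Lemma~\ref{l2.1}, quoted from \cite{kz}, and holds for arbitrary $A\in\mathscr A(\boz)$ with no upper semicontinuity hypothesis. If you insist on a self-contained directional argument, the sharp upper bound on $d_A(x,x+tv)$ would have to come from a Lebesgue-point/Fubini argument (as in Lemma~\ref{comp-dist}, but with the sharp anisotropic constant), not from wusc; that step is nontrivial in $n\ge2$ and is exactly what the citation to \cite{kz} replaces.
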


 To prove Theorem \ref{t2.2}, we first notice that
 the distance $d_A$ is locally comparable to the Euclidean distance,
 and hence $ (\boz,\,d_A,\,dx)$ satisfies the local doubling property in the
sense that if $U$ is open and $U\Subset \boz$, there exists a constant
depending on $U$ and $A$
 such that for each  $x\in U$  and  $0<r<\min\{\text{diam}(U), d_A(x,\,\boz^\complement )\}/4$,
  $$\mu(B_{d_A}(x,\,2r))\le C(A,\,U)\mu(B_{d_A}(x,\, r)).$$
Here and in what follows, $d_A(x,\,K )=\inf_{x\in K}d_A(x,\,z)$
and if $d_A$ is the Euclidean distance, we use the notation $d_{\rn}(x,\,K )$.

Therefore, applying \cite[Theorem 2.1]{kz} and its remark, we conclude with the
following.

  \begin{lem}\label{l2.1}
For each $u\in\lip_{d_A}(\boz)$,
$H(x,\,\nabla u(x))\le (\lip_{d_A}u(x))^2$ for almost all $x\in\boz$.
  \end{lem}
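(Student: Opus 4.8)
The plan is to reduce the inequality $H(x,\nabla u(x))\le(\lip_{d_A}u(x))^2$ for a.e.\ $x$ to a known statement about the relation between a differential (upper gradient) structure and a distance-based Lipschitz constant on a metric measure space, namely \cite[Theorem 2.1]{kz}. First I would fix an open set $U\Subset\boz$; by the ellipticity \eqref{e1.1} with its continuous $\lambda$, the intrinsic distance $d_A$ is bi-Lipschitz to the Euclidean distance on $U$, so $(\,U,d_A,dx)$ is a locally doubling metric measure space (as noted just before the lemma), and the notions $\lip_{d_A}(U)=\lip(U)=W^{1,\infty}(U)$ all coincide as sets. In particular $u\in\lip_{d_A}(\boz)$ restricts to a $d_A$-Lipschitz function on each such $U$, with $\lip_{d_A}(u,U)<\infty$.

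The key point is that $\lip_{d_A}u$ is an \emph{upper gradient} (indeed, the minimal weak upper gradient up to a.e.\ equality) of $u$ in the metric space $(\boz,d_A,dx)$: this is exactly the content invoked from \cite[Theorem 2.1]{kz} and its remark, which identify the pointwise Lipschitz constant with the minimal generalized upper gradient in the locally doubling, locally Poincar\'e setting that $(\boz,d_A,dx)$ enjoys. On the other hand, one computes the length of a curve $\gamma$ in the metric $d_A$: since $d_A$ is the intrinsic (Carnot--Carath\'eodory-type) distance associated to the form $\mathscr E_A$, for a.e.\ $x$ and every unit Euclidean vector $\xi$ the $d_A$-speed of the straight segment through $x$ in direction $\xi$ is $\sqrt{H(x,\xi)}$ (this is the ``easy" inequality $d_A(x,y)\ge$ (something)/$\sqrt{H}$ built into the definition of $d_A$ via test functions $w$ with $H(x,\nabla w)\le1$, precisely as carried out in the one-dimensional computation in the proof of Theorem \ref{t2.1}). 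Consequently, along a.e.\ line segment, $u$ is absolutely continuous with respect to arclength measured in $d_A$, and the fundamental theorem of calculus gives $|\nabla u(x)\cdot\xi|\le \lip_{d_A}u(x)\cdot\sqrt{H(x,\xi)}$ for a.e.\ $x$ and a.e.\ direction $\xi$, hence for all $\xi$ by continuity in $\xi$ of both sides for fixed $x$ (a.e.\ $x$). Optimizing over $\xi$ — choosing $\xi=A(x)\nabla u(x)/\sqrt{H(x,\nabla u(x))}$, so that $\nabla u(x)\cdot\xi=\sqrt{H(x,\nabla u(x))}$ — yields $\sqrt{H(x,\nabla u(x))}\le\lip_{d_A}u(x)$, which is the claim.

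Alternatively, and perhaps cleanly since the lemma explicitly cites \cite[Theorem 2.1]{kz}: that theorem states (in the generality of locally doubling spaces supporting a local Poincar\'e inequality, which $(\boz,d_A,dx)$ satisfies because $A$ is elliptic) that the pointwise Lipschitz constant $\lip_{d_A}u$ coincides a.e.\ with the minimal $2$-weak upper gradient $g_u$ of $u$ computed in the metric measure space, while the remark identifies this minimal weak upper gradient with the ``differential" object $\sqrt{H(x,\nabla u(x))}$ whenever the Dirichlet form's \emph{carr\'e du champ} is $\langle A\nabla u,\nabla u\rangle$ — giving directly $g_u(x)^2=H(x,\nabla u(x))$ as an intrinsic quantity, and hence $H(x,\nabla u(x))=g_u(x)^2\le(\lip_{d_A}u(x))^2$ (one only needs the inequality here). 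The main obstacle I anticipate is matching conventions: one must check that the local doubling and local Poincar\'e hypotheses of \cite[Theorem 2.1]{kz} are genuinely satisfied in our setting — the doubling is already recorded above, and the $(1,2)$-Poincar\'e inequality on $(U,d_A,dx)$ follows from the Euclidean one by the bi-Lipschitz comparison with constants depending on $U$ and $\lambda$ — and that the ``minimal weak upper gradient equals $\sqrt{H}$" identification does not secretly require continuity of $A$ (it does not, since it is a statement about the Dirichlet form $\mathscr E_A$ and its energy measure, which is $\langle A\nabla u,\nabla u\rangle\,dx$ for $u\in W^{1,2}_{\loc}$ regardless of regularity of $A$). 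Everything else is a routine localization argument.
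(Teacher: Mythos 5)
Your second route --- invoking \cite[Theorem 2.1]{kz} and its remark once the local doubling of $(\boz,d_A,dx)$ is recorded (together with the local Poincar\'e inequality that you correctly deduce from the bi-Lipschitz comparison with the Euclidean metric) --- is exactly the paper's proof; the short paragraph preceding the lemma is the entire argument given there.

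The direct segment argument that you lead with, however, contains an error in the exponent of $A$ that makes it invalid as written. The $d_A$-speed of a Euclidean line segment through $x$ in a unit direction $\xi$ is controlled by $|A^{-1/2}(x)\xi|$, not by $\sqrt{H(x,\xi)}=|A^{1/2}(x)\xi|$: any competitor $w$ in the definition of $d_A$ satisfies $|A^{1/2}\nabla w|\le 1$ a.e., so $|\nabla w\cdot\xi|=|\langle A^{1/2}\nabla w,\,A^{-1/2}\xi\rangle|\le |A^{-1/2}\xi|$, and integrating along the segment (plus a Fubini/Lebesgue-point argument) gives $d_A(x,x+t\xi)\le |t|\,|A^{-1/2}(x)\xi|+o(|t|)$. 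Your own appeal to the one-dimensional computation in the proof of Theorem~\ref{t2.1} in fact confirms this: there one shows $d_A(x,y)/|x-y|\to 1/\sqrt{A(x)}$ at Lebesgue points, which is $|A^{-1/2}(x)|$, not $\sqrt{A(x)}$. The corrected inequality is therefore $|\nabla u(x)\cdot\xi|\le\lip_{d_A}u(x)\,|A^{-1/2}(x)\xi|$, and optimizing over $\xi$ (the maximizing direction is $\xi\parallel A(x)\nabla u(x)$, the same direction you chose) gives $\sup_\xi |\nabla u(x)\cdot\xi|/|A^{-1/2}(x)\xi| = |A^{1/2}(x)\nabla u(x)|=\sqrt{H(x,\nabla u(x))}\le\lip_{d_A}u(x)$, which is the claim. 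With the normalization you wrote, $\sqrt{H(x,\xi)}$ is not $\le 1$ (already for $A(x)=\lambda I_n$ with $\lambda>1$ it equals $\lambda$), so the concluding step ``hence $\sqrt{H(x,\nabla u(x))}\le\lip_{d_A}u(x)$'' does not follow from the inequality $|\nabla u(x)\cdot\xi|\le\lip_{d_A}u(x)\sqrt{H(x,\xi)}$ that you state.
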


To obtain the reverse relation,
 we need the following result.

\begin{lem}\label{l2.2}
Assume that $n\ge2$.
Let $x_0\in\boz$ and $0<r< d_\rn(x_0,\, \boz^\complement)$.
If the diffusion matrix $\wz A$
is a constant positive definite symmetric matrix $A$ on the Euclidean ball
${B(x_0,\,r)}$,
then, for the function $u(y)=|A^{1/2}\xi|^{-1}\langle \xi, y\rangle$ with $\xi\in S^{n-1}$,
we have  $ \lip_{d_{\wz A}}u \le 1$ on $B(x_0,\,r)$.
\end{lem}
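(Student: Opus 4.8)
The plan is to reduce the claim to an explicit computation of the intrinsic distance $d_{\wz A}$ inside the ball $B(x_0,r)$. Since $\wz A$ equals the constant matrix $A$ throughout $B(x_0,r)$, on this ball the Dirichlet form associated with $\wz A$ is just the constant-coefficient form $v\mapsto\int \langle A\nabla v,\nabla v\rangle$, and so the intrinsic distance restricted to $B(x_0,r)$ should coincide with the Riemannian distance of the constant metric given by $A$; concretely, one expects $d_{\wz A}(y,z)\ge |A^{-1/2}(y-z)|$ for $y,z\in B(x_0,r)$ (and in fact equality for points joined by a segment inside the ball). To establish the lower bound I would fix $y,z\in B(x_0,r)$, let $v(w)=\langle A^{-1}(y-z),w\rangle/|A^{-1/2}(y-z)|$ be the affine function whose $A$-gradient has unit length, check that $H(w,\nabla v(w))=\langle A\nabla v,\nabla v\rangle=1$ a.e.\ on $B(x_0,r)$ (here using that $\nabla v$ is a constant vector and $\langle A A^{-1}\eta,A^{-1}\eta\rangle=\langle A^{-1}\eta,\eta\rangle$), truncate $v$ to make it a legitimate globally-defined competitor in the definition of $d_{\wz A}$ — i.e.\ multiply by a cutoff supported in a slightly larger ball still inside $\boz$, which does not affect values near $y,z$ — and then read off from the definition of $d_{\wz A}$ that $d_{\wz A}(y,z)\ge v(y)-v(z)=|A^{-1/2}(y-z)|$.

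Given this lower bound for $d_{\wz A}$, the proof of the lemma is immediate: for $u(w)=|A^{1/2}\xi|^{-1}\langle\xi,w\rangle$ and any $y,z\in B(x_0,r)$ we have
\[
|u(y)-u(z)|=\frac{|\langle\xi,y-z\rangle|}{|A^{1/2}\xi|}
=\frac{|\langle A^{1/2}\xi,A^{-1/2}(y-z)\rangle|}{|A^{1/2}\xi|}
\le |A^{-1/2}(y-z)|\le d_{\wz A}(y,z)
\]
by Cauchy--Schwarz and the lower bound, so $\lip_{d_{\wz A}}(u,B(x_0,r))\le 1$, which in particular gives $\lip_{d_{\wz A}}u\le 1$ pointwise on $B(x_0,r)$.

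The one point that needs a little care — and which I expect to be the main (minor) obstacle — is the truncation step: the function $v$ above is defined on all of $\rn$, but to be an admissible competitor in the supremum defining $d_{\wz A}$ it must lie in $C(\boz)\cap W^{1,2}_{\loc}(\boz)$ with $H(\cdot,\nabla v)\le 1$ a.e.\ on all of $\boz$, whereas we only control $H$ on $B(x_0,r)$. One fixes this by choosing a radius $r<\rho<d_\rn(x_0,\boz^\complement)$ and a cutoff $\varphi\in C^\infty_c(B(x_0,\rho))$ with $\varphi\equiv1$ on $B(x_0,r)$, and replacing $v$ by a suitably rescaled version of $\varphi\cdot v$; since the $A$-gradient norm of $v$ is a fixed constant, one can absorb the extra contribution of $\nabla\varphi$ on the annulus by scaling down $v$ by a factor close to $1$, at the cost of only a harmless $(1+\ez)$ factor, and then let $\ez\to0$. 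This is exactly the same cutoff device already used in the proof of Theorem \ref{t2.1} (and it is the standard way to handle $d_A$ locally), so it introduces no real difficulty; everything else is the elementary linear algebra identity $\langle A\eta,\eta\rangle$ versus $\langle A^{-1}\eta,\eta\rangle$ under the substitution $\eta=A^{-1}(\cdot)$.
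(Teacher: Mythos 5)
The target inequality you identify, $d_{\wz A}(y,z)\ge|A^{-1/2}(y-z)|$ for nearby $y,z\in B(x_0,r)$, is indeed the heart of the matter, and your linear-algebra computation $|u(y)-u(z)|=|\langle A^{1/2}\xi,A^{-1/2}(y-z)\rangle|/|A^{1/2}\xi|\le|A^{-1/2}(y-z)|$ is the same one the paper uses. But your truncation step, which you flag as the ``minor obstacle,'' is in fact a genuine gap: multiplying the affine function $v$ by a cutoff $\varphi\in C^\infty_c(B(x_0,\rho))$ produces $\nabla(\varphi v)=\varphi\nabla v+v\nabla\varphi$, and on the annulus $B(x_0,\rho)\setminus B(x_0,r)$ the second term has Euclidean size of order $|v|\cdot(\rho-r)^{-1}\sim\rho/(\rho-r)$ (after subtracting a constant to minimize $|v|$), which is bounded below by $1$ no matter how $\rho$ is chosen and cannot be made small by scaling $v$ down by $(1+\ez)^{-1}$. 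Worse, on that annulus $\wz A$ is an arbitrary elliptic matrix --- not $A$ --- so the constraint $H(\cdot,\nabla(\varphi v))\le1$ is not something you can control just by knowing the $A$-gradient of $v$ on the inner ball. Incidentally, the cutoff you attribute to the proof of Theorem~\ref{t2.1} is not there: in the $n=1$ case the competitor $w(z)=\int_x^z A(s)^{-1/2}\,ds$ already satisfies $A|w'|^2\le1$ almost everywhere on all of $\boz$, with no truncation.

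The paper's fix is to replace the affine function by a ``cone'' $\wz u_{x,y}$ that grows linearly in the ellipsoidal distance $|A^{-1/2}(\cdot-x)|$ until it reaches the value $u(y)$ and is then frozen at $u(y)$; its gradient therefore vanishes outside the ellipsoid $\{z:|A^{-1/2}(z-x)|\le|A^{-1/2}(y-x)|\}$, so the unknown behaviour of $\wz A$ outside $B(x_0,r)$ never enters. The price is that this ellipsoid must lie inside $B(x_0,r)$, which forces $y$ to be close to $x$, so the argument yields only the \emph{pointwise} Lipschitz bound $\lip_{d_{\wz A}}u(x)\le1$ --- which is exactly what the lemma asserts, whereas your proposal aims for the stronger global bound $\lip_{d_{\wz A}}(u,B(x_0,r))\le1$, which neither the lemma claims nor your cutoff argument establishes.
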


\begin{proof}
It suffices to show that, for every $x\in B(x_0,\,r)$,
there exists $\dz\in(0,\,r-|x-x_0|)$
such that for each fixed $y\in B(x,\,\dz)$, we can find
a  function $\wz u_{x,\,y}$  on $\boz$ satisfying

(i) $\wz u_{x,\,y}(z)=u(z)$  for all $z$ in the line segment joining $x$ and $y$,

(ii) $ \langle \wz A(z)\nabla\wz u_{x,\,y}(z),\,\nabla\wz u_{x,\,y}(z)\rangle\le1 $ for almost all $z\in\boz$.

\noindent Indeed, from the definition of the intrinsic distance, the existence
of such a function will lead to
$|\wz u_{x,\,y}(z)-\wz u_{x,\,y}(w)|\le d_{\wz A}(z,\,w)$ for all $z,\,w\in\boz$,
which gives the desired inequality
$|u(x)-  u (y)|\le d_{\wz A}(x,\,y)$ when choosing $z,\,w$ as  $x,\,y$. Hence we have $\lip_{d_A}u(x)\le 1$ for all $x\in B(x_0,\,r)$.

To this end, we first consider  the case $A=\lz I_n$. Set
 \begin{equation}\label{e3.1}
 \wz u_{x,\,y}(z)=\lf\{\begin{array}{ll}
 [u(y)-u(x)]\frac{|z-x|}{|y-x|}+u(x)&\quad if \quad |z-x|\le |y-x|\\
u(y)&\quad if \quad |z-x|\ge |y-x|.
 \end{array}\r.
 \end{equation}
Obviously, $\wz u_{x,\,y}$ satisfies (i). To see (ii),
for $x\ne z\in B(x,\,|x-y|)\subset B(x,\,\dz)\subset B(x_0,\,r)$, we have
$$|A^{1/2}\nabla \wz u_{x,\,y}(z)|=  \frac{|u(y)-u(x)|}{|y-x|}\frac{|A^{1/2}(z-x)|}{|z-x|}
= \frac{|\langle \xi, y-x\rangle|}{ |A^{1/2}\xi|} \frac{|A^{1/2}(z-x)|}{|y-x||z-x|}\le1;
$$
while when  $z\in \boz\setminus \overline{B}(x,\,|x-y|)$ we have
$|\wz A^{1/2}\nabla \wz u_{x,\,y}(z)|= 0\le 1$.

For a more general constant positive definite symmetric matrix  $A$, we
modify the above construction as follows,
following the idea given above.
Notice that  there exist  $\dz_1,\dz\in(0,\,r)$ such that
$A^{1/2} B(0,\,\dz_1)\subset B(0,\,r )$ and $A^{-1/2}B(0,\,\dz)\subset B(0,\,\dz_1)$.
Thus, for every $y\in B(x,\,\dz)$, we have  $|A^{-1/2}(y-x)|<\dz_1$  and  hence
$$\{z\in \rn:\ |A^{-1/2}(z-x)|\le |A^{-1/2}(y-x)|\}\subset A^{1/2}B(0,\,\dz_1)+\{x\}\subset B(x_0,\,r).$$
For a given pair $x,\,y$, set
 \begin{equation}\label{e3.2}
 \wz u_{x,\,y}(z)=\lf\{\begin{array}{ll}
 [u(y)-u(x)]\frac{|A^{-1/2}(z-x)|}{|A^{-1/2}(y-x)|}+u(x)&\quad if \quad |A^{-1/2}(z-x)|\le |A^{-1/2}(y-x)|,\\
u(y)&\quad if \quad |A^{-1/2}(z-x)|\ge |A^{-1/2}(y-z)|.
 \end{array}\r.
 \end{equation}
 We still need to check that if  $|A^{-1/2}(z-x)|\le |A^{-1/2}(y-x)|$, then
 $|A^{1/2}\nabla \wz u_{x,y}(z)|\le1$.
 Indeed, notice that for $y\in\mathbb{R}^n$,
 \[
   \langle A^{1/2}\xi, A^{-1/2}y\rangle = \langle A^{1/2}\xi,A^{1/2}A^{-1}y\rangle
       =\langle\xi, A\, A^{-1}y\rangle=\langle\xi,y\rangle,
 \]
 and so
 $\langle \xi, y\rangle=\langle A^{1/2}\xi, A^{-1/2}y\rangle$. Furthermore, for $z\in S^{n-1}$,
 $$\nabla |A^{-1/2}z|=\frac{A^{-1}z}{|A^{-1/2}z|},$$
 and so by the Cauchy-Schwarz inequality, we have
 $$|A^{1/2}\nabla \wz u_{x,y}(z)|=
\frac{|\langle \xi, y-x\rangle|}{ |A^{1/2}\xi|} \frac{|A^{1/2}A^{-1}(z-x)|}{|A^{-1/2}(y-x)||A^{-1/2}(z-x)|}
\le1,$$
as desired.
\end{proof}

\begin{lem}\label{comp-dist}
Given $A\in\mathscr{A}(\Omega)$ and $x\in\Omega$, we have
\[
   \liminf_{x\ne y\to x}\frac{d_A(y,x)}{|y-x|}\ge \frac{1}{\sqrt{\lambda(x)}}.
\]
Furthermore,
\[
  \limsup  _{x\ne y\to x}\frac{d_A(y,x)}{|y-x|}\le \sqrt{\lambda(x)}.
\]
\end{lem}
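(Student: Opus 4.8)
The plan is to prove the two inequalities separately. Since the admissible class in the definition of $d_A$ is invariant under $u\mapsto -u$, the distance $d_A$ is symmetric, so it suffices to study $d_A(x,y)=\sup\{u(x)-u(y)\}$ as $y\to x$; in both cases I would localize near $x$ and exploit the continuity of the ellipticity function $\lz$ to replace $\lz(z)$ by $\lz(x)+\ez$ on a small ball $B(x,\dz)$.

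For the lower bound, fix $\ez>0$ and, using continuity of $\lz$, pick $\dz\in(0,\,d_\rn(x,\boz^\complement))$ with $\lz(z)\le\lz(x)+\ez$ for all $z\in B(x,\dz)$. I would test the definition of $d_A$ against a single competitor, namely the truncated cone $u:=(\lz(x)+\ez)^{-1/2}w$, where $w(z)=\max\{0,\dz-|z-x|\}$. This $w$ is globally $1$-Lipschitz on $\rn$, hence lies in $C(\boz)\cap W^{1,2}_\loc(\boz)$, with $|\nabla w|=1$ a.e.\ on $B(x,\dz)$ and $\nabla w=0$ a.e.\ outside $\overline{B(x,\dz)}$; therefore $H(z,\nabla u(z))=(\lz(x)+\ez)^{-1}\langle A(z)\nabla w(z),\nabla w(z)\rangle\le(\lz(x)+\ez)^{-1}\lz(z)|\nabla w(z)|^2\le 1$ for a.e.\ $z\in\boz$, so $u$ is admissible. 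Since $w(x)-w(y)=|y-x|$ whenever $|y-x|\le\dz$, we get $d_A(x,y)\ge u(x)-u(y)=(\lz(x)+\ez)^{-1/2}|y-x|$ for such $y$; letting $y\to x$ and then $\ez\to 0$ gives $\liminf_{x\ne y\to x}d_A(y,x)/|y-x|\ge\lz(x)^{-1/2}$.

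For the upper bound, fix $\ez>0$ and pick $\dz>0$ with $B(x,\dz)\Subset\boz$ and $\lz(z)\le\lz(x)+\ez$ on $B(x,\dz)$. Let $u\in C(\boz)\cap W^{1,2}_\loc(\boz)$ be any competitor, so $H(\cdot,\nabla u)\le 1$ a.e. The left inequality in \eqref{e1.1} gives $|\nabla u(z)|^2\le\lz(z)\,\langle A(z)\nabla u(z),\nabla u(z)\rangle\le\lz(z)\le\lz(x)+\ez$ for a.e.\ $z\in B(x,\dz)$. Since $B(x,\dz)$ is convex and $u$ is continuous with essentially bounded gradient there, $u$ is $(\lz(x)+\ez)^{1/2}$-Lipschitz on $B(x,\dz)$ (mollify $u$ and integrate its gradient along line segments, which stay in the ball by convexity, then pass to the limit using that $u_\eta\to u$ uniformly on compacts); hence $u(x)-u(y)\le(\lz(x)+\ez)^{1/2}|x-y|$ for all $y\in B(x,\dz)$. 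Taking the supremum over admissible $u$ yields $d_A(x,y)\le(\lz(x)+\ez)^{1/2}|x-y|$ on $B(x,\dz)$, and letting $y\to x$ and $\ez\to 0$ finishes the proof.

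The arguments are short; the two points that call for a little care are (a) that the truncated cone $w$ really is a global competitor in the definition of $d_A$ — which is exactly why $w$ is truncated at level $0$, so that $\nabla w$ vanishes outside $B(x,\dz)$ and the constraint $H(\cdot,\nabla u)\le 1$ holds on all of $\boz$ rather than only on the small ball — and (b) the standard fact that a continuous $W^{1,2}_\loc$ function with essentially bounded gradient on a convex open set is Lipschitz there with the same constant. I do not expect a genuine obstacle here: the continuity of $\lz$ does the essential work by upgrading the pointwise two-sided bound \eqref{e1.1} to a uniform bound on a small ball, after which the cone competitor and the ellipticity gradient estimate give, respectively, the two one-sided inequalities.
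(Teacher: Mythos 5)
Your proof is correct. The lower bound is essentially identical to the paper's: both use a truncated cone $(\lambda_x)^{-1/2}(r-|z-x|)_+$ (the paper uses $\lambda_x(r)=\sup_{B(x,r)}\lambda$, you use $\lambda(x)+\ez$; the two choices are interchangeable by continuity of $\lambda$), and the cone is admissible precisely because it vanishes outside a small ball, which is exactly the point you flag.

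The upper bound takes a genuinely different but equally valid route. The paper fixes $y$, picks a near-optimizer $w$ with $d_A(x,y)\le |w(y)-w(x)|+\ez$, and then shows $w$ is Lipschitz near $x$ by a Fubini argument: one finds a line segment $[x_\eta,y_\eta]$, close to $[x,y]$, whose intersection with the null set where $H(\cdot,\nabla w)\le 1$ fails has $\mathcal H^1$-measure zero, integrates $|\nabla w|$ along it, and lets $\eta\to 0$ using continuity of $w$. You instead bound $u(x)-u(y)$ uniformly over all admissible $u$ at once, by invoking the standard fact that a function in $W^{1,2}_{\rm loc}$ with essentially bounded gradient on a convex set is Lipschitz with the same constant (proved by mollification). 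Both arguments have the same mathematical content — admissible competitors are locally $\sqrt{\lambda(x)+\ez}$-Lipschitz near $x$ — but yours is arguably cleaner, since it replaces the ad hoc good-segment selection with a named lemma and avoids having to first pass to a near-optimizer. The paper's Fubini version is a little more elementary (it does not need the mollification lemma as a black box) but requires more care about choosing the ACL representative of $w$. Either route gives the claim.
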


\begin{proof}
   We fix $x\in\Omega$ and $r>0$ such that $\overline{B}(x,r)\subset\Omega$. Set
\[
   \lambda_x(r)=\sup_{z\in B(x,r)}\lambda(z).
\]
Notice that, by the continuity of $\lambda,$ we have $\lim_{r\to 0}\lambda_x(r)=\lambda(x)$.
For the function $u_x$ given by
\[
   u_x(z)=\frac{1}{\sqrt{\lambda_x(r)}}(r-|z-x|)_+,
\]
we have that $\nabla u_x=0$ on $\Omega\setminus\overline{B}(x,r)$ and so $H(z,\nabla u_x(z))=0\le 1$
when $z\in\Omega\setminus\overline{B}(x,r)$. When $x\ne z\in B(x,r)$, we have
$\nabla u_x(z)=\lambda_x(r)^{-1/2} |z-x|^{-1}(z-x)$. Therefore, by the
ellipticity condition of $A$,
$H(z,\nabla u_x(z))=\langle A(z)\nabla u_x(z),\nabla u_x(z)\rangle\le 1$.
It follows that, by
the definition of $d_A$, when $x\ne y\in B(x,r)$,
\[
  d_A(x,y)\ge |u_x(y)-u_x(x)|=\frac{1}{\sqrt{\lambda_x(r)}}\, |y-x|,
\]
from which the first part of the claim follows.

For the second part, notice that if $x,y\in\Omega$, then there is a function
$w$ on $\Omega$
with $d_A(x,y)\le |w(y)-w(x)|+\epsilon$ and $H(z,\nabla w(z))\le 1$ for
almost every $z\in\Omega$.
Let $E$ be the set of points at which this inequality fails.
Then the Lebesgue measure of $E$ is zero.
By the ellipticity property of $A,$ it follows that for almost every
$z\in\Omega\setminus E$,
$|\nabla w(z)|^2\le \lambda(z)$. By an argument using Fubini's theorem, for
each $\eta>0$
there is a point $y_\eta\in B(y,\eta)$ and a point $x_\eta\in B(x,\eta)$
such that the intersection of the Euclidean line segment $[x_\eta,y_\eta]$
connecting $x_\eta$ to
$y_\eta$ with $E$ has $1$-dimensional Hausdorff measure zero. Thus
\[
|w(y_\eta)-w(x_\eta)|\le\int_{[x_\eta,y_\eta]}|\nabla w|\, ds \le \left[\sup_{p\in B(x,2{d_A}(x,y))}\sqrt{\lambda(p)}\right]
             |x_\eta-y_\eta|.
\]
Letting $\eta\to 0$ and using the fact that $w$ is continuous, we obtain
\[
   d_A(x,y)\le  \left[\sup_{p\in B(x,2{d_A}(x,y))}\sqrt{\lambda(p)}\right]\, |x-y|\ + \epsilon.
\]
Letting $\epsilon\to 0$ we obtain
\[
   d_A(x,y)\le \left[\sup_{p\in B(x,2{d_A}(x,y))}\sqrt{\lambda(p)}\right]\, |x-y|,
\]
from which the second part of the claim follows.
\end{proof}

\begin{proof}[Proof of Theorem \ref{t2.2}.]
Let $u\in\lip_{d_A}(\boz).$ Then, $u$ is also locally Lipschitz continuous
with respect to the Euclidean metric on $\boz$.
Notice that by Lemma \ref{l2.1},   we always have
$H(x,\,\nabla u(x))\le (\lip_{d_A}u(x))^2$ for almost all $x\in\boz$.
Now  we will show that $(\lip_{d_A}u(x))^2\le H(x,\,\nabla u(x))$
for every $x\in\boz$ at which $A$ is weak upper semicontinuous and  $u$ is
differentiable.
Fix such an $x\in\boz$.
If $\nabla u(x)= 0$, then $|u(x)-u(y)|=o(|x-y|)$,
which implies by Lemma~\ref{comp-dist} that
\[
   \lip_{d_A} u(x)\le \lip\, u(x)\, \sqrt{\lambda(x)}=0=H(x,\nabla u(x)).
\]
If  $\nabla u(x)\ne0$, take $\xi=\frac{\nabla u(x)}{|A^{1/2}(x)\nabla u(x)|}$.
Then, by Lemma~\ref{comp-dist} again, together with the fact that $u$ is
differentiable at $x$,
\begin{eqnarray}\label{e2.5}
\lip_{d_A}u(x)&&=\limsup_{y\to x}\frac{|u(y)-u(x)|}{d_A(x,y)}
\\
&&\le\limsup_{y\to x}\frac{|u(y)-u(x)-\langle \nabla u(x),\,y-x\rangle|}{d_A(x,y)}+
\limsup_{y\to x}\frac{|\langle \nabla u(x) ,\,y-x\rangle|}{d_A(x,y)}\nonumber\\
\le&& \sqrt{\lambda(x)}\, \lim_{y\to x}\frac{|u(y)-u(x)-\langle \nabla u(x),\,y-x\rangle|}{|x-y|}+
\limsup_{y\to x}\frac{|\langle \nabla u(x) ,\,y-x\rangle|}{d_A(x,y)}\nonumber\\
&&\le 0+| A^{1/2}(x)\nabla u(x)|\limsup_{y\to x}\frac{|\langle \xi,\,y-x\rangle|}{d_A(x,y)}.\nonumber
\end{eqnarray}
Observe that $$| A^{1/2}(x)\nabla u(x)|^2=\langle A^{1/2}(x)\nabla u(x),\,A^{1/2}(x)\nabla u(x) \rangle
=H(x,\, \nabla u(x) ).$$
Let $w(y)=\langle \xi,\,y\rangle$. It suffices to prove that
\begin{equation}\label{e3.x1}
 \limsup_{y\to x}\frac{|w(y)-w(x)|}{d_A(x,y)}\le 1.
 \end{equation}
To this end, notice that $\nabla w(y)=\xi$, and hence
$H(y,\,\nabla w(y))=H(y,\, \xi )$ for all $y\in\boz$.
By the weak upper semicontinuity of $A$ at $x$, there exists $\dz\in(0,\,r)$
such that
for all $y\in B(x,\,\dz)$,
$$ H(y,\, \xi ) \le (1+\ez)H(x,\, \xi )=(1+\ez).$$
Set $\wz A(z)=(1+\ez)A(x)$ for $z\in B(x,\,\dz)$ and $\wz A(z)=A(z)$ for $z \notin B(x,\,\dz)$.
It can be directly seen that $d_{\wz A}\le d_A$.
We consider the function $v(y)=\frac{1}{\sqrt{1+\ez}}w(y)$ and $\eta=|\xi|^{-1}\xi\in S^{n-1}$; to this choice
we apply Lemma \ref{l2.2} to obtain
$$\limsup_{y\to z}\frac{|w(y)-w(z)|}{d_A(z,y)}\le  \limsup_{y\to z}\frac{|w(y)-w(z)|}{d_{\wz A}(z,y)}\le \sqrt{1+\ez}.$$
The arbitrariness  of $\ez>0$ leads to \eqref{e3.x1}.
\end{proof}

\begin{thm}\label{t2.3}
A diffusion matrix $A$ belongs to $\mathscr A_{\rm wusc}(\boz)$  if and only if
$|Du|^2_{d_A} =H(\cdot,\,\nabla u )$ almost everywhere
for all $u\in C^1(\boz)$.
\end{thm}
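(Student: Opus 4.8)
The plan is to establish both implications, using the machinery already built. For the forward direction (\(A\in\mathscr A_{\rm wusc}(\boz)\) implies \(|Du|^2_{d_A}=H(\cdot,\nabla u)\) a.e.\ for all \(u\in C^1(\boz)\)), I would deduce this more or less directly from Theorem \ref{t2.2}. Indeed, a \(C^1\) function on \(\boz\) is locally Lipschitz with respect to the Euclidean metric, hence locally Lipschitz with respect to \(d_A\) (the two are locally comparable by \eqref{e1.1}), so Theorem \ref{t2.2} gives \((\lip_{d_A}u(x))^2 = H(x,\nabla u(x))\) for a.e.\ \(x\). It then remains to pass from \(\lip_{d_A}u\) to its local variant \(|Du|_{d_A}\). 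The key observation is that for \(C^1\) functions the blow-up is affine: at a point \(x\) of weak upper semicontinuity of \(A\), on a small ball \(B(x,r)\) the Hamiltonian \(H(\cdot,\nabla u(x))\) is essentially dominated by \((1+\ez)H(x,\nabla u(x))\), and \(\nabla u\) is genuinely continuous, so \(\lip_{d_A}(u,B(x,r))\) can be estimated by comparison against the linear map \(y\mapsto\langle\nabla u(x),y\rangle\) using Lemma \ref{l2.2} with the frozen matrix \((1+\ez)A(x)\), exactly as in the proof of \eqref{e3.x1}. Letting \(r\to0\) and \(\ez\to0\) yields \(|Du|_{d_A}(x)\le\sqrt{H(x,\nabla u(x))}\); the reverse inequality \(|Du|_{d_A}(x)\ge\lip_{d_A}u(x)\ge\sqrt{H(x,\nabla u(x))}\) is immediate from the definitions and Lemma \ref{l2.1} (since \(|Du|_{d_A}\ge\lip_{d_A}u\) always). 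So the two quantities agree a.e.

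For the converse direction (if \(|Du|^2_{d_A}=H(\cdot,\nabla u)\) a.e.\ for all \(u\in C^1(\boz)\) then \(A\in\mathscr A_{\rm wusc}(\boz)\)), I would argue by contradiction, or rather contrapositively: suppose \(A\) fails to be weak upper semicontinuous on \(\boz\). Then there is \(\xi\in S^{n-1}\) and a set of positive measure of points \(x\) at which \(\langle A(\cdot)\xi,\xi\rangle\) fails to be weak upper semicontinuous, i.e.\ there is a density point \(x_0\) and a sequence \(y_k\to x_0\) with \(y_k\) lying off a set of measure zero such that \(\langle A(y_k)\xi,\xi\rangle\) stays bounded above, away from \(\langle A(x_0)\xi,\xi\rangle\), by some definite amount — more precisely, \(\limsup\) over the complement of a null set of \(\langle A(\cdot)\xi,\xi\rangle\) at \(x_0\) is strictly larger than \(\langle A(x_0)\xi,\xi\rangle\). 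The idea is that for the linear test function \(u(y)=\langle\xi,y\rangle\in C^1(\boz)\), one has \(H(\cdot,\nabla u)=\langle A(\cdot)\xi,\xi\rangle\), whose essential supremum near \(x_0\) is governed by this \(\limsup\); but \(|Du|_{d_A}(x_0)=\lim_{r\to0}\lip_{d_A}(u,B(x_0,r))\) only sees the intrinsic distance, which for linear functions is controlled by the "effective" metric. The resulting inequality \(|Du|^2_{d_A}(x_0)<\limsup\langle A(\cdot)\xi,\xi\rangle = \esssup\) near \(x_0\) of \(H(\cdot,\nabla u)\), combined with the fact that the a.e.\ identity \(|Du|^2_{d_A}=H(\cdot,\nabla u)\) together with weak upper semicontinuity properties of \(|Du|_{d_A}\) would force \(H(\cdot,\nabla u)\) to be weak upper semicontinuous at \(x_0\), gives the contradiction. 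The cleanest route is probably to show that \(x\mapsto|Du|^2_{d_A}(x)\) is always weak upper semicontinuous (since \(\lip_{d_A}(u,B(x,r))\) is monotone in \(r\), so \(|Du|_{d_A}\) is a decreasing limit of upper semicontinuous functions of \(x\)), whence the hypothesized identity would force \(H(\cdot,\nabla u)\), and in particular \(\langle A(\cdot)\xi,\xi\rangle\), to be weak upper semicontinuous for every \(\xi\in S^{n-1}\) — which is precisely \(A\in\mathscr A_{\rm wusc}(\boz)\).

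Concretely I would: (1) verify the elementary fact that \(|Du|_{d_A}\) is weak upper semicontinuous on \(\boz\) for any \(u\), using that \(r\mapsto\lip_{d_A}(u,B(x,r))\) decreases as \(r\downarrow0\) and that each \(x\mapsto\lip_{d_A}(u,B(x,r))\) is lower semicontinuous, so some care is needed — in fact one wants \(\lip_{d_A}(u,B(x,r))\) to be upper semicontinuous in \(x\) for fixed small \(r\), which holds because enlarging the ball slightly only increases the sup; thus \(|Du|_{d_A}=\inf_r \lip_{d_A}(u,\overline B(x,r))\) is an infimum of upper semicontinuous functions, hence upper semicontinuous, hence certainly weak upper semicontinuous; (2) for the forward implication, combine Theorem \ref{t2.2} with the linear-blow-up comparison via Lemma \ref{l2.2} as sketched; (3) for the converse, apply the identity to all linear functions \(u(y)=\langle\xi,y\rangle\) and read off that \(\langle A(\cdot)\xi,\xi\rangle = |Du|^2_{d_A}\) a.e.\ is weak upper semicontinuous. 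The main obstacle I anticipate is step (2): passing from the pointwise Lipschitz constant \(\lip_{d_A}u\) (handled by Theorem \ref{t2.2}) to the local variant \(|Du|_{d_A}\) for \(C^1\) functions requires uniform control of the blow-up over a whole ball rather than along sequences to a single point, and one must handle the variation of \(\nabla u\) over \(B(x,r)\) simultaneously with the freezing of \(A\); this is exactly where weak upper semicontinuity of \(A\) is used in an essential, non-removable way, and getting the quantifiers on \(r\), \(\ez\), and the radius in Lemma \ref{l2.2} to line up correctly is the delicate bookkeeping. A secondary subtlety is making sure, in the converse direction, that "a.e.\ equality of an arbitrary measurable function with a weak upper semicontinuous function" genuinely forces the former to be weak upper semicontinuous — this is true essentially by definition of weak upper semicontinuity (it only constrains behavior off null sets), but it should be stated carefully.
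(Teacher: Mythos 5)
Your proposal is correct and follows essentially the same route as the paper: the ``only if'' direction is proved by testing with linear functions $u(y)=\langle\xi,y\rangle$ and using that $|Du|_{d_A}$ is upper semicontinuous; the ``if'' direction uses the lower bound from Lemma \ref{l2.1}, and for the matching upper bound subtracts off the affine approximation $\langle\nabla u(x),\cdot\rangle$ at $x$, handles the remainder via the continuity of $\nabla u$ (so its gradient is uniformly small near $x$, whence its $|D\cdot|_{d_A}$ vanishes), and handles the linear part via the frozen-matrix comparison $\wz A=(1+\ez)A(x)\mathbf{1}_{B(x,r)}+A\mathbf{1}_{\boz\setminus B(x,r)}$ together with Lemma \ref{l2.2}. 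The paper merely packages the ``if'' direction as an explicit three-step argument (linear case, vanishing-gradient case, decomposition), which is exactly the bookkeeping you anticipate as the delicate point.
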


\begin{proof}
Suppose that whenever $u\in C^1(\boz)$ we have
$|Du|_{d_A}^2(x)=H(x,\,\nabla u(x))$ almost everywhere.
For each $\xi\in S^{n-1}$,
taking $u(x)=\langle \xi,\,x\rangle$, we know that
$H(x,\,\xi)=|Du|_{d_A}^2(x)$ almost everywhere,
 and hence is weak upper semicontinuous on $\boz$ because $x\mapsto |Du|_{d_A}(x)$
 is upper semicontinuous.

Suppose now that $A\in\mathscr A_{\rm wusc}(\boz)$. Then by Theorem~\ref{t2.1} and Theorem~\ref{t2.2},
for all $u\in \lip_{d_A}(\boz)$ and
almost all $x\in\boz$ we have
$$H(x,\,\nabla u(x))\le (\lip_{d_A}u(x))^2\le |Du|^2_{d_A}(x).$$
To see that $ |Du|^2_{d_A}(x)\le H(x,\,\nabla u(x))$ almost everywhere for $u\in C^1(\boz)$, we give a 3-step argument.

{\it Step 1. }
Let $\xi\in S^{n-1}$ and consider the function $u(y)=\langle\xi,\,y\rangle$.
Then $\nabla u(x)=\xi$.
To prove that
$|Du|^2_{d_A}(x)\le  H(x,\,\xi)$, it suffices to check that  for almost every $x\in\boz$,
\begin{equation}\label{e2.x2}
|Du|^2_{d_A}(x)\le (1+\ez) H(x,\,\xi)
\end{equation} for any $\ez>0$.
The following argument is similar to that of Theorem \ref{t2.2}. Let $x$ be a point of weak upper
semicontinuity of $A$.
For each fixed $\ez>0$, we know that there exists $r>0$ such that for almost all
$y\in B(x,\,r)$,  $H(y,\,\xi)\le (1+\ez)H(x,\,\xi)$.
Let $$\wz A(y)=(1+\ez)A(x)1_{B(x,\,r)}(y) I_n+A(y)1_{\boz\setminus \overline {B(x,\,r)}}.$$
The corresponding intrinsic distance $d_{\wz A}$ is no more than $d_A$, and hence
$|Du|_{d_A}\le |Du|_{d_{\wz A}}$ everywhere.  By Lemma \ref{l2.2}, for any $y,\,z\in B(x,\,r/4)$, we have
$$|u(y)-u(z)|\le d_{\wz A}(z,\,y)$$ and hence,  $|Du|^2_{d_{\wz A}}(x)\le  {\langle \wz A(x)\xi,\,\xi\rangle}$,
which implies  \eqref{e2.x2} as desired.

{\it Step 2. } If $u\in C^1(\boz)$  and $\nabla u(x)=0$,
then for any $\ez>0$, by the continuity of $\nabla u$,
there exists a ball $B(x,\,r)$
such that $\|\nabla u\|_{L^\fz(B(x,\,r))}\le \ez$.
With the aid of Lemma~\ref{comp-dist}, we obtain
$$|u(y)-u(z)|\le \ez|y-z|\ls \ez d_A(y,\,z)$$
which implies that $|Du|_{d_A}(x)\ls \ez$, and
hence $|Du|_{d_A}(x)=0$ due to the arbitrariness  of $\ez>0$.

{\it Step 3. } If $u\in C_\loc^1(\boz)$ with $\nabla u(x)\ne 0$, then
let $$\wz u(y)=u(y)-\langle\nabla u(x),\,y\rangle$$ for $y\in\boz$.
Then $\wz u$ is of class $C^1(\boz)$ with $\nabla\wz u(x)=0$, which implies that $|D\wz u|_{d_A}(x)=0$ by Step 2.
Moreover, since $\langle A(y)\nabla u(x),\nabla u(x)\rangle$ is weak upper semicontinuous at $y=x$,
by Step 1 we have
$$|D  u|_{d_A}(x)\le  |D\wz u|_{d_A}(x)+|D(\langle \nabla u(x),\cdot\rangle)|_{d_A}(x)
\le\sqrt{H(x,\,\nabla u(x))} $$
as desired.
\end{proof}

\begin{rem}\label{r2.1} \rm
(i) We cannot replace the function class $C^1(\boz)$ by
$\lip_\loc(\boz)$ in the above Theorem \ref{t2.3}.
Indeed, there exists a function
$u\in\lip(\boz)$ such that $|\nabla u|^2$ is not weak upper semicontinuous on $\boz$, hence the above theorem fails
for $A=I_n$.
For example $$u(x_1,\,x_2)=\int_0^{x_1} 1_{C_{\bf a}}(z_1)\,dz_1,$$
where $C_{{\bf a}}\subset\mathbb{R}$ is a Cantor set of positive $1$-dimensional Lebesgue measure, and
$1_{C_{\bf a}}:\mathbb{R}\to\mathbb{R}$ is the characteristic function of $C_{\bf a}$.
For a construction of such a Cantor set $C_{\bf a}$ see Section~3.

 (ii) Generally, for every open set $U$ with $\overline U \subset \boz$ and $u\in \lip(\boz)$, we have
$|Du|_{d_A}\le C_U\lip_{d_A}u$ almost everywhere on $U$,
 where $C_U\ge 1$ is a constant. The proof of this is not trivial.
\end{rem}

Finally, we point out a relation between weak upper  semicontinuity
and the Eikonal equation. The Eikonal equation is necessary to obtain the
coincidence of
the intrinsic differential and distance structures. The Eikonal equation states
that
$$\langle A(\cdot)\nabla d_{A;\,x_0}(\cdot),\, \nabla d_{A;\,x_0} (\cdot)\rangle=1$$  almost everywhere for each $x_0\in\boz$,
where  $d_{A;\,x_0}(x)=d_A(x,\,x_0)$. When $A=I_n$ the statement of the
Eikonal equation is that whenever $x_0\in\mathbb{R}^n$, the function
${d_A}(\cdot,x_0)$
satisfies $|\nabla {d_A}(\cdot,x_0)|=1$ almost everywhere in $\mathbb{R}^n$ (indeed, everywhere in
$\mathbb{R}^n\setminus\{x_0\}$).

\begin{prop}\label{p2.1}
For each $x_0\in\boz$,
$H(x,\, \nabla d_{A;\,x_0}(x))=1$ almost everywhere
 if and only if $H(\cdot,\, \nabla d_{A;\,x_0} )$ is weak upper semicontinuous on $\boz$.
\end{prop}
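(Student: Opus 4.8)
The plan is to base everything on two elementary facts about the function $g:=d_{A;\,x_0}$. Since $d_A$ is locally comparable to the Euclidean distance, $g$ is locally Lipschitz, so $g\in C(\boz)\cap W^{1,2}_\loc(\boz)$ and $g$ is differentiable almost everywhere; moreover, by the triangle inequality $|g(x)-g(y)|\le d_A(x,\,y)$, so $\lip_{d_A}g\le1$ everywhere and hence, by Lemma \ref{l2.1}, $H(x,\,\nabla g(x))\le1$ for almost every $x$. Thus the content of the Eikonal equation is only the reverse inequality on a set of full measure. The second fact is that $g$ is the pointwise largest admissible function normalized at $x_0$: if $u\in C(\boz)\cap W^{1,2}_\loc(\boz)$, $H(\cdot,\,\nabla u)\le1$ a.e., and $u(x_0)=0$, then $u(x)=u(x)-u(x_0)\le d_A(x,\,x_0)=g(x)$ for all $x$.

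With these, the forward implication is immediate: if $H(\cdot,\,\nabla g)=1$ almost everywhere, then this function agrees a.e.\ with the constant $1$, and a function equal almost everywhere to a constant is weak upper semicontinuous on $\boz$ (take the exceptional null set to contain both the non-differentiability set of $g$ and the set where the function differs from $1$).

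For the converse I would argue by contradiction. If $H(x,\,\nabla g(x))=1$ fails on a set of positive measure, then, using the a.e.\ bound $H(x,\,\nabla g(x))\le1$, the set $G$ of points $x$ at which $g$ is differentiable, $H(\cdot,\,\nabla g)$ is weak upper semicontinuous, and $H(x,\,\nabla g(x))<1$ has positive measure; fix $x_1\in G$ with $x_1\ne x_0$. The key step is that weak upper semicontinuity upgrades pointwise smallness to smallness on a Euclidean ball: writing $c^2:=H(x_1,\,\nabla g(x_1))<1$, weak upper semicontinuity at $x_1$ provides a null set $E$ and an $r>0$ with $\overline{B(x_1,\,r)}\subset\boz$, $x_0\notin\overline{B(x_1,\,r)}$, and $H(y,\,\nabla g(y))\le c_0:=(1+c^2)/2<1$ for every $y\in B(x_1,\,r)\setminus E$, hence for almost every $y\in B:=B(x_1,\,r)$. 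One then enlarges a sublevel set of $g$ inside $B$: set $\psi(y):=\kappa\,(r/2-|y-x_1|)_+$ with $\kappa:=(1-\sqrt{c_0})/(2\sqrt{\Lambda})$ and $\Lambda:=\max_{\overline B}\lambda$, and let $u:=g+\psi$. Then $u\in C(\boz)\cap W^{1,2}_\loc(\boz)$ with $u(x_0)=g(x_0)=0$; outside $B(x_1,\,r/2)$ we have $\nabla u=\nabla g$, while inside, since $\xi\mapsto\sqrt{H(y,\,\xi)}$ is a norm, for a.e.\ $y$ we get $\sqrt{H(y,\,\nabla u(y))}\le\sqrt{H(y,\,\nabla g(y))}+\sqrt{H(y,\,\nabla\psi(y))}\le\sqrt{c_0}+\sqrt{\Lambda}\,\kappa<1$, using $H(y,\,\nabla\psi(y))\le\lambda(y)|\nabla\psi(y)|^2\le\Lambda\kappa^2$. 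Hence $H(\cdot,\,\nabla u)\le1$ a.e.\ on $\boz$, so by maximality $u\le g$, contradicting $u(x_1)=g(x_1)+\kappa r/2>g(x_1)$. Therefore $H(x,\,\nabla g(x))\ge1$ a.e., which with the reverse inequality gives $H(x,\,\nabla g(x))=1$ a.e.

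The main obstacle is this converse direction, and within it the passage from the pointwise hypothesis to smallness of $H(\cdot,\,\nabla g)$ on an entire Euclidean ball (up to a null set) --- this is precisely where weak upper semicontinuity is used, and it is what makes the ``enlarge a sublevel set'' construction possible. The remaining verification that $g+\psi$ stays an admissible competitor is routine, using only the triangle inequality for the norm $\sqrt{H(y,\,\cdot)}$ (equivalently, $\langle A(y)\cdot,\cdot\rangle=\langle\tfrac12(A(y)+A(y)^T)\cdot,\cdot\rangle$ with the symmetric part positive definite) and the ellipticity bound $H(y,\,\xi)\le\lambda(y)|\xi|^2$ together with continuity of $\lambda$; a minor point is to choose $x_1$, hence $B$ and $\operatorname{supp}\psi$, away from $x_0$, which is possible since $|G|>0$.
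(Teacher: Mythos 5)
Your proof is correct, and for the converse direction it takes a genuinely different route from the paper's. The structural facts you lean on --- that $g:=d_{A;x_0}$ is locally Lipschitz with $\lip_{d_A}g\le1$, hence $H(\cdot,\nabla g)\le1$ a.e.\ by Lemma~\ref{l2.1}, and that $g$ is the pointwise largest admissible function vanishing at $x_0$ --- are exactly what the paper implicitly uses, and the forward implication is the same trivial observation. For the converse, the paper first reduces to the claim that $\lf\|H(\cdot,\nabla g)\r\|_{L^\fz(B_{d_A}(x,r))}=1$ for every $x$ and all small $r$, a statement that does \emph{not} itself invoke weak upper semicontinuity; it establishes this claim by a rescaling contradiction, replacing $\min\{d_A(x_0,\cdot),r_k\}$ with its multiple by $(1-\ez_k)^{-1/2}$ to manufacture an admissible competitor that inflates the distance. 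Weak upper semicontinuity is then applied only at the very end, to pass from the essential supremum over shrinking $d_A$-balls to the pointwise value $H(x,\nabla g(x))$. You instead fold weak upper semicontinuity directly into the contradiction: at a point $x_1$ where $H(x_1,\nabla g(x_1))<1$, weak upper semicontinuity upgrades the pointwise bound to $H(\cdot,\nabla g)\le c_0<1$ a.e.\ on a Euclidean ball about $x_1$, and then, instead of rescaling, you \emph{add} a small cone $\psi$ supported there (with $x_0$ kept outside); the triangle inequality for the norm $\sqrt{H(y,\cdot)}$ shows $g+\psi$ stays admissible, while $(g+\psi)(x_1)>g(x_1)=d_A(x_1,x_0)$ contradicts maximality. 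Your modification is localized at an arbitrary point of failure $x_1$ rather than at the base point $x_0$ --- the paper's rescaling competitor is tailored to a ball centered at $x_0$, and extending its $L^\fz$ claim to balls around general $x$ is not spelled out there --- so the bump construction is both a different mechanism (additive perturbation rather than multiplicative rescaling) and a somewhat more robust one. Both proofs share the core idea of producing a strictly larger admissible competitor to violate the extremality of $g$.
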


\begin{proof}
If $H(x,\, \nabla d_{A;\,x_0}(x))=1$ almost everywhere,
then obviously it is weak upper semicontinuous.
Conversely, assume that $H(\cdot,\, \nabla d_{A;\,x_0} )$ is weak upper semicontinuous on $\boz$.
It suffices to show that for each point $x\in \boz$ and all sufficient small $r>0$,
\begin{equation}\label{e2.x1}
 \lf\| H(\cdot,\, \nabla d_{A;\,x_0} )\r\|_{L^\fz(B_{d_A}(x,\,r))}=1.
\end{equation}
Indeed, if this is true, then for almost all $x$, the weak upper semicontinuity
leads to
$$1\ge H(x,\, \nabla d_{A;\,x_0}(x) )\ge
\limsup_{r\to 0}\lf\|H(\cdot,\, \nabla d_{A;\,x_0})\r\|_{L^\fz(B_{d_A}(x,\,r))}=1.$$

We prove \eqref{e2.x1} by contradiction.
Assume that \eqref{e2.x1}  fails for some $x_0\in\boz$ and some decreasing
sequence  $\{r_k\}$ which converges to $0$ as $k\to\fz$.
By Lemma~\ref{comp-dist}  and its proof, $d_A$ is comparable to the Euclidean distance.
Hence for sufficiently large $k$ we have $B_{d_A}(x_0,\, r_k)\subset \boz$.
Moreover, since we already know from Lemma~\ref{l2.1} applied to the function ${d_A}(\cdot,x_0)$ that
$ \lf\|H(\cdot,\, \nabla d_{A;\,x_0}  )  \r\|_{L^\fz(B_{d_A}(x,\,r_k))}\le1$,
 by our assumption there must be a positive number $\ez_k<1$ such that
  $$\lf\| H(\cdot,\, \nabla d_{A;\,x_0}  )\r\|_{L^\fz(B_{d_A}(x,\,r_k))}\le 1-\ez_k.$$
Taking $$u_k(x)=\frac 1{\sqrt{1-\ez_k}}\min\{{d_A}(x_0,\,x),\,r_k\},$$  we have $u_k\in \lip(\boz)$
with
 $\lf\| H(\cdot,\,\nabla u_k)\r\|_{L^\fz( B_{d_A}(x_0,\,r_k) )}\le1 $  and
  $ H(z,\,\nabla u_k(z))=0 $ for $ z\in \boz\setminus B_{d_A}(x_0,\,r_k)$.
Hence  $u_k$ satisfies the conditions in the definition of $d_A(x,x_0)$, and so
$$d_A(x_0,\,x)\ge u_k(x)-u_k(x_0)=\frac 1{\sqrt{1-\ez_k}}d_A(x_0,\,x),$$
which is a contradiction.
\end{proof}

\begin{rem}\label{r2.2}\rm As shown in \cite[Section 7]{kz},
the Eikonal equation determines the asymptotic behavior of the gradient of
heat kernel for a regular, strongly local Dirichlet form
on a compact underlying space.
We do not know if it is possible to deduce the coincidence of the
intrinsic differential and distance structures from the Eikonal equation.
\end{rem}

\section{Case $n\ge2$ and $A\notin\mathscr A_{\rm wusc}(\boz)$: $H(\cdot,\,\nabla u )=(\ne)(\lip_{d_A}u )^2$ }\label{s3}

In this section, we always assume that $n\ge 2$ and $\boz=\rn$.
From Theorem~\ref{t2.2} we know that when $A\in\mathscr{A}_{\rm wusc}(\boz)$,
for locally Lipschitz functions $u$ on $\boz$ we have $(\text{Lip}_{d_A}u)^2=H(x,\nabla u)$.
In this section we show that when $A\notin\mathscr A_{\rm wusc}(\boz)$,
the (non-)coincidence  of  the above intrinsic distance and differential
structures depends on the geometry of the set where $A$ fails to be
weak-upper semicontinuous.
Indeed, we construct two examples based on a Cantor set and a Sierpinski carpet
to show that both coincidence and noncoincidence  may happen;
see, respectively,  Theorem \ref{t3.1} and Proposition \ref{p3.1}.
We consider the simple $A\notin\mathscr A_{\rm wusc}(\boz)$ defined by
 $$A_{E,\,\dz}(x)= (1-\dz 1_{E})I_n,$$
 where $\dz\in(0,\,1)$ and
 $E$ is a closed subset of $\rn$ with positive measure and empty interior.
Obviously,  $A_{E,\,\dz}$ fails to be weak upper semicontinuous at  each
$x\in E$,
and hence $A\notin\mathscr A_{\rm wusc}(\rn)$.
If $E$ is a suitable large Cantor set, then the  intrinsic distance and
differential structures never coincide.
If $E$ is a suitable large Sierpinski carpet, then  the intrinsic distance and
differential structures do coincide.
Recall that  for $\dz\in(-\fz,\,0]$,  $A_{E,\,\dz}= (1-\dz 1_{E})I_n\in\mathscr A_{\rm wusc}(\rn)$,
 and hence the associated intrinsic distance and differential structures always coincide by Theorem \ref{t2.2}.

\subsection{The large Cantor set $C_{\bf a}$}
Let ${\bf a}=\{a_j\}$ with $0<a_j<1$.
Then  the associated Cantor set $C_{\bf a}$ is constructed as follows:
 $I_{i}$,  $i=1,2$,
are the two closed intervals obtained by removing the middle open interval with length $a_1$ from $I=[0,\,1]$
and are ordered from left to right;
 when $m\ge2$,
the subintervals $I_{i_1\cdots i_m}$, $i_m=1,2$,
are the two closed intervals  obtained by removing
the middle open interval with length $a_m|I_{ i_1\cdots i_{m-1}}|$
from $I_{ i_1\cdots i_{m-1}}$,
and  are ordered from left to right; finally set
$$C_{\bf a} \equiv\bigcap_{m\in\nn}\ \bigcup_{i_1,\,\cdots,\,i_m\in\{1,2\}}I_{i_1\cdots i_m}.$$
and  $C^{(n)}_{\bf a }\equiv C_{\bf a }\times\cdots\times C_{\bf a }$.

Notice that $C^{(n)}_{\bf a}$ is closed and has empty interior,
and that $C^{(n)}_{\bf a}$ has positive $n$-dimensional Lebesgue measure
if and only if $C_{\bf a}$ has positive $1$-dimensional Lebesgue measure.
Moreover,
 $$|C_{\bf a}|=\lim_{m\to\fz}(1-a_1)\cdots (1-a_m),$$ and
by taking logarithms,  $|C_{\bf a}|>0$ if and only if ${\bf a}\in\ell^1$.
Thus,
the $n$-dimensional Lebesgue measure of $C^{(n)}_{\bf a}$ is positive  if and
only if ${\bf a}\in\ell^1$.

\begin{prop}\label{p3.1} Assume that ${\bf a}\in \ell^1$ with $0<a_j<1$ for
all $j\in\nn$.
If $\dz\in(-\fz,\,0]$, then the associated intrinsic length and differential
structure of
$A_{C^{(n)}_{\bf a},\,\dz}$ do coincide;
while if $\dz\in(0,\,1)$,  then the associated intrinsic distance and
differential structures never coincide.
\end{prop}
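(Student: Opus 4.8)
\emph{The case $\dz\le0$.} Here $A_{C^{(n)}_{\bf a},\,\dz}=(1-\dz 1_{C^{(n)}_{\bf a}})I_n=(1+|\dz|1_{C^{(n)}_{\bf a}})I_n$. Since $C^{(n)}_{\bf a}$ is closed, $1_{C^{(n)}_{\bf a}}$ is upper semicontinuous, so for every $\xi\in S^{n-1}$ the map $z\mapsto\langle A_{C^{(n)}_{\bf a},\,\dz}(z)\xi,\xi\rangle=(1+|\dz|1_{C^{(n)}_{\bf a}}(z))|\xi|^2$ is upper semicontinuous, in particular weak upper semicontinuous; thus $A_{C^{(n)}_{\bf a},\,\dz}\in\mathscr A_{\rm wusc}(\rn)$ and, since $n\ge2$, Theorem \ref{t2.2} gives the coincidence of the intrinsic distance and differential structures. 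Nothing more is needed in this case.

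\emph{The case $\dz\in(0,1)$: reduction.} Write $E:=C^{(n)}_{\bf a}$ and $M:=(1-\dz)^{-1/2}>1$; then the admissibility condition $H(z,\xi)\le1$ in the definition of $d_{A_{E,\,\dz}}$ reads $|\xi|\le1$ for $z\notin E$ and $|\xi|\le M$ for $z\in E$. For $\dz>0$ the linear functions $\ell_v(z)=\langle v,z\rangle$, $|v|=1$, satisfy $H(z,\nabla\ell_v)=1-\dz 1_E(z)\le1$ and are therefore admissible, which yields $d_{A_{E,\,\dz}}(x,y)\ge|x-y|$; conversely every admissible $u$ is $M$-Lipschitz, so $d_{A_{E,\,\dz}}(x,y)\le M|x-y|$, and in particular $u(z)=z_1$ lies in $\lip_{d_{A_{E,\,\dz}}}(\rn)$. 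The whole argument hinges on the distance identity
\[
   d_{A_{E,\,\dz}}(x,\,x+te_j)=|t|\qquad\text{for every }x\in\rn,\ j\in\{1,\dots,n\},\ t\in\rr .
\]
Granting it: $d_{A_{E,\,\dz}}\ge|\cdot-\cdot|$ gives $\lip_{d_{A_{E,\,\dz}}}u(x)\le\limsup_{y\to x}|y_1-x_1|/|y-x|\le1$, while the identity with $j=1$ gives $\lip_{d_{A_{E,\,\dz}}}u(x)\ge\limsup_{t\to0}|t|/d_{A_{E,\,\dz}}(x,x+te_1)=1$, so $\lip_{d_{A_{E,\,\dz}}}u\equiv1$ on $\rn$. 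On the other hand $H(x,\nabla u(x))=1-\dz 1_E(x)=1-\dz$ for $x\in E$, and ${\bf a}\in\ell^1$ forces $|E|=|C_{\bf a}|^n>0$. Hence $(\lip_{d_{A_{E,\,\dz}}}u)^2=1\ne1-\dz=H(\cdot,\nabla u)$ on a set of positive measure, i.e. the two structures do not coincide.

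\emph{Proof of the identity, and the main obstacle.} The inequality ``$\ge$'' was noted above. For ``$\le$'', fix $x,j,t$. Since $C_{\bf a}$ is nowhere dense, for some $k\ne j$ there are arbitrarily small $\ez>0$ with $x_k+\ez\notin C_{\bf a}$; for such an $\ez$ let $\gamma_\ez$ be the polygonal arc from $x$ to $x+te_j$ through the points $x+\ez e_k$ and $x+te_j+\ez e_k$. Every point on the middle segment has $k$-th coordinate $x_k+\ez\notin C_{\bf a}$, hence lies off $E$, and the two end segments have length $\ez$; thus $\gamma_\ez$ has length $|t|+2\ez$ and $\mathcal H^1(E\cap\gamma_\ez)\le2\ez$. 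It remains to prove the general bound
\[
   d_{A_{E,\,\dz}}(p,q)\ \le\ \operatorname{length}(\gamma)+(M-1)\,\mathcal H^1(E\cap\gamma)\qquad\text{for any rectifiable arc }\gamma\text{ from }p\text{ to }q ,
\]
since applying it to $\gamma_\ez$ and letting $\ez\downarrow0$ gives $d_{A_{E,\,\dz}}(x,x+te_j)\le|t|$. For this bound, take an admissible $u$, mollify it to $u_\rho=u*\phi_\rho$, observe $|\nabla u_\rho|\le|\nabla u|*\phi_\rho\le1+(M-1)(1_E*\phi_\rho)$ pointwise, and integrate along an arc-length parametrization of $\gamma$ to get $u_\rho(q)-u_\rho(p)\le\operatorname{length}(\gamma)+(M-1)\int_\gamma(1_E*\phi_\rho)\,d\mathcal H^1$; then send $\rho\to0$, using continuity of $u$ and closedness of $E$ (which yields $\limsup_{\rho\to0}\int_\gamma(1_E*\phi_\rho)\,d\mathcal H^1\le\mathcal H^1(E\cap\gamma)$ by upper semicontinuity of $1_E$), and finally take the supremum over $u$. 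I expect this last passage to the limit to be the delicate point: unlike in dimension one, the straight segment $[p,q]$ typically meets $E=C^{(n)}_{\bf a}$ in a set of \emph{positive} length (because $C_{\bf a}$ has positive measure), so a crude length estimate is worthless — the role of $n\ge2$ is precisely to furnish the transversal coordinate $e_k$ needed to replace $[p,q]$ by the nearly straight arc $\gamma_\ez$ that genuinely avoids $E$ up to a negligible set, and it is the closedness of $E$ that makes the convolution limit go the favourable way.
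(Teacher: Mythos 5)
Your proof is correct, and the case $\dz\le0$ coincides with the paper's remark that $1_{C^{(n)}_{\bf a}}$ is upper semicontinuous for a closed set, so $A_{C^{(n)}_{\bf a},\,\dz}\in\mathscr A_{\rm wusc}(\rn)$ and Theorem \ref{t2.2} applies. For $\dz\in(0,1)$ the overall strategy also agrees with the paper (test with $u(z)=z_1$; show $d_{A_{E,\dz}}(x,x+te_1)=|t|$ so $\lip_{d_{A_{E,\dz}}}u\equiv 1$ while $H(\cdot,\nabla u)=1-\dz$ on the positive-measure set $E$), but your proof of the distance identity is a genuinely different route. The paper proves its Lemma \ref{l3.1} by citing \cite[Proposition~3.1]{kz} for a crude estimate $d_{C^{(n)}_{\bf a},\,\dz}(x,y)\le C|x-y|$ (via curves in the complement of the Cantor set) and then translating the endpoints transversally by a vanishing amount so that the straight segment between the translated points lies in the complement, concluding by the triangle inequality. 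You instead construct an explicit polygonal detour $\gz_\ez$ and establish the general estimate
\[
   d_{A_{E,\,\dz}}(p,q)\ \le\ \operatorname{length}(\gz)\ +\ (M-1)\,\mathcal H^1(E\cap\gz)
\]
for every rectifiable arc $\gz$ from $p$ to $q$, by mollifying the admissible functions, using $|\nabla u_\rho|\le 1+(M-1)(1_E*\phi_\rho)$, integrating along $\gz$, and passing $\rho\to 0$ with Fatou/dominated convergence together with the upper semicontinuity of $1_E$. Your limit step $\limsup_{\rho\to0}\int_\gz(1_E*\phi_\rho)\,d\mathcal H^1\le\mathcal H^1(E\cap\gz)$ is correct: on $\gz\cap E^c$ the integrand vanishes for small $\rho$ because $E$ is closed, and on $\gz\cap E$ it is bounded by $1$. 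This approach buys two things: it is self-contained (no appeal to the connectivity lemma of \cite{kz} and to a constant $C_{C^{(n)}_{\bf a}}$ independent of $\dz$), and it yields a slightly stronger statement — the distance identity for \emph{every} $x\in\rn$, not just $x\in C^{(n)}_{\bf a}$ as in the paper's Lemma \ref{l3.1}. Both proofs exploit the same geometric fact that for $n\ge2$ one can step off $E$ in a transversal coordinate direction at arbitrarily small cost.
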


To prove Proposition \ref{p3.1}, we need the following (geometric) property
of $C^{(n)}_{\bf a}$ that holds even if ${\bf a}\not\in\ell^1$. To simplify
our notation, we set $d_{C^{(n)}_{\bf a},\,\dz}(x,\,y):
=d_{A_{C^{(n)}_{\bf a}},\,\dz}(x,\,y).$

\begin{lem}\label{l3.1}
Let $\dz\in(0,\,1)$.
Then for  any $x\in C^{(n)}_{\bf a}$ and any $y\in x+\rr{\bf e_1}$, we have
$d_{C^{(n)}_{\bf a},\,\dz}(x,\,y)=|x-y|$.
\end{lem}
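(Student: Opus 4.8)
The plan is to establish $d_{C^{(n)}_{\mathbf a},\delta}(x,y)\ge|x-y|$ and $d_{C^{(n)}_{\mathbf a},\delta}(x,y)\le|x-y|$ separately. Write $E=C^{(n)}_{\mathbf a}$ and $A=A_{E,\delta}=(1-\delta 1_E)I_n$, so that $H(z,\xi)=(1-\delta 1_E(z))|\xi|^2$. The lower bound is immediate: since $\delta\in(0,1)$ we have $1-\delta 1_E\le 1$ pointwise, so every $u\in C(\rn)\cap W^{1,2}_{\loc}(\rn)$ with $|\nabla u|\le1$ a.e.\ satisfies $H(\cdot,\nabla u)\le1$ a.e.\ and is thus admissible in the definition of $d_A$; hence $d_A(x,y)\ge d_{\rn}(x,y)=|x-y|$.

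For the reverse inequality it suffices to show that $u(x)-u(y)\le|x-y|$ for every admissible $u$ (replacing $u$ by $-u$ then upgrades this to $|u(x)-u(y)|\le|x-y|$, and taking the supremum gives the claim). The key point I would use is that, although the Euclidean segment $[x,y]$ — which is parallel to $\mathbf e_1$ — typically meets $E$ in a set of positive length, it can be slid entirely off $E$: since $E=C_{\mathbf a}\times\dots\times C_{\mathbf a}$, a point of $\rn$ lies in $E$ only if each of its coordinates lies in the closed nowhere dense set $C_{\mathbf a}$, so fixing one transverse coordinate to a value outside $C_{\mathbf a}$ produces an open slab disjoint from $E$. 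This is where $n\ge2$ enters (there is a coordinate besides the first to move), as does the density of $\rr\setminus C_{\mathbf a}$, which lets this value be chosen arbitrarily close to the original one.

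Concretely, write $x=(x_1,x_2,\dots,x_n)$ and $y=(y_1,x_2,\dots,x_n)$ (the two points share all but the first coordinate, since $y-x\in\rr\mathbf e_1$). Given $\eta>0$ I would choose $x_2^\eta\notin C_{\mathbf a}$ with $|x_2^\eta-x_2|<\eta$ and $\rho>0$ with $(x_2^\eta-\rho,x_2^\eta+\rho)\cap C_{\mathbf a}=\emptyset$, and set $W_\eta=\{z\in\rn:\ |z_2-x_2^\eta|<\rho\}$. Then $W_\eta$ is convex and $W_\eta\cap E=\emptyset$, so for admissible $u$ one has $|\nabla u|^2=H(\cdot,\nabla u)\le1$ a.e.\ on $W_\eta$; since $u$ is continuous and $W_\eta$ is convex, $u$ is $1$-Lipschitz on $W_\eta$ for the Euclidean distance (the standard fact that a continuous function on a convex open set whose weak gradient is bounded by $1$ is $1$-Lipschitz there; alternatively, use the ACL property of $u$ with Fubini along lines in $W_\eta$ parallel to $\mathbf e_1$, approximating the line through the points below by such good lines). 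The points $x^\eta=(x_1,x_2^\eta,x_3,\dots,x_n)$, $y^\eta=(y_1,x_2^\eta,x_3,\dots,x_n)$ and the segment joining them all lie in $W_\eta$, whence $|u(x^\eta)-u(y^\eta)|\le|x^\eta-y^\eta|=|x_1-y_1|=|x-y|$. Letting $\eta\to0$ and using the continuity of $u$ yields $|u(x)-u(y)|\le|x-y|$, completing the proof. (The argument in fact uses only $y-x\in\rr\mathbf e_1$.)

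I expect the only genuine obstacle to be spotting this perturbation idea — sliding the segment onto a nearby line that avoids $E$ altogether, rather than attempting a direct estimate along $[x,y]$, which really does spend positive length inside $E$ and so cannot be controlled by the constraint $|\nabla u|\le1$ that is available only off $E$. The remaining ingredients (the existence of the slab from the nowhere-density of $C_{\mathbf a}$, the Sobolev-to-Lipschitz fact on the convex slab, and the passage $\eta\to0$ via continuity of $u$) are routine.
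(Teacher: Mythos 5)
Your proof is correct, and it takes a genuinely different route from the paper's. Both arguments share the same geometric idea -- you cannot estimate along the original segment $[x,y]$, which spends positive length inside $E=C^{(n)}_{\bf a}$, so you slide to a nearby parallel segment that misses $E$ entirely, exploiting that $n\ge2$ gives a transverse coordinate to perturb and that $C_{\bf a}$ is nowhere dense. But the way the two arguments close the gap between the original segment and the perturbed one differs. The paper estimates $d_{C^{(n)}_{\bf a},\dz}(x,x_k)$ by citing \cite[Proposition 3.1]{kz} (quasi-convexity of $\rn\setminus C^{(n)}_{\bf a}$, giving a comparability constant $C$ independent of $\dz$), obtains $d_{C^{(n)}_{\bf a},\dz}(x_k,y_k)=|x_k-y_k|$ from the fact that the shifted line misses $E$, and then applies the triangle inequality for $d_{C^{(n)}_{\bf a},\dz}$, letting $k\to\fz$. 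You instead work directly at the level of admissible functions: on the slab $W_\eta$ (convex, disjoint from $E$) every admissible $u$ satisfies $|\nabla u|\le 1$ a.e.\ and is therefore $1$-Lipschitz, giving $|u(x^\eta)-u(y^\eta)|\le|x-y|$, and you close the gap using only the \emph{continuity of $u$} (which is built into the admissible class) rather than any local estimate on the distance $d_{C^{(n)}_{\bf a},\dz}$ itself. This is a real simplification: you dispense with the appeal to \cite{kz} entirely, and you never need the intermediate fact that the shifted segment realizes the distance. The only thing to be careful about -- which you note -- is the Sobolev-to-Lipschitz step on the convex slab; the standard ACL/Fubini argument or mollification handles it. A minor observation: the paper's invocation of \cite{kz} is somewhat stronger than what its own proof actually requires (local Lipschitz comparability of $d_{C^{(n)}_{\bf a},\dz}$ to the Euclidean distance, which already follows from ellipticity), so your version also clarifies that no global quasi-convexity of the complement of $E$ is really needed for this lemma.
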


\begin{proof}
Notice that, from the definition, we have
$d_{C_{\bf a}^{(n)},\,\dz}(x ,\,y )\ge |x-y|$
for each pair $x,y\in\mathbb{R}^n$.
Let $x\in C_{\bf a}^{(n)}$ and $y= x+r{\bf e_1}$ for some $r>0$.
It suffices to show that $d_{C_{\bf a}^{(n)},\,\dz}(x ,\,y )\le |x-y|$.

Recall that if $n=2$, it is already proved in \cite[Proposition 3.1]{kz} that
for every pair $x,\,y\in \rn$, there exists a curve $\gz$ such that except for
its endpoints,
$\gz\subset\rn\setminus C_{\bf a}^{(2)}$, and
with Euclidean length  $\ell_{\rr^2}(\gz)\le C_{C^{(2)}_{\bf a}}|x-y|$.
Hence $d_{C_{\bf a}^{(2)},\,\dz}(x,\,y)\le C_{C^{(2)}_{\bf a}}|x-y|$,
 where $C_{C_{\bf a}^{(2)}}$ is a constant determined by $C_{\bf a}^{(2)}$ and
independent of $\dz$.
 If $n\ge2$, similar arguments still apply  and hence
 for every pair
$x,\,y\in \rn$, $d_{C_{\bf a}^{(n)},\,\dz}(x,\,y)\le C_{C^{(n)}_{\bf a}}|x-y|$.
 By the construction of $C_{\bf a}$, for each $k\in\nn$ we can find
$x_k\in\rn\setminus C^{(n)}_{\bf a}$ such that
$\langle x-x_k,{\bf e_1}\rangle=0$ and
$$d_{C_{\bf a}^{(n)},\,\dz}(x,\,x_k)\le C_{C_{\bf a}^{(n)}}|x-x_k|\le \frac{1}{2k}|x-y|.$$
Take $y_k= y-(x-x_k)=x_k+r{\bf e_1}$. Then
$y_k \in \rn\setminus C^{(n)}_{\bf a}$.
Moreover, the line segment $x_k+[0,\,r]{\bf e_1}$ is contained in
$\rn\setminus C^{(n)}_{\bf a}$,
which implies that
$$d_{C_{\bf a}^{(n)},\,\dz}(x_k,\,y_k)=|x_k-y_k|=|x-y|.$$
Therefore,
$$d_{C_{\bf a}^{(n)},\,\dz}(x ,\,y )\le d_{C_{\bf a}^{(n)},\,\dz}(x_k,\,y_k)+d_{C_{\bf a}^{(n)},\,\dz}(x,\,x_k)
+d_{C_{\bf a}^{(n)},\,\dz}(y,\,y_k)\le (1+\frac1k)|x-y|,$$
which implies that
$d_{C_{\bf a}^{(n)},\,\dz}(x ,\,y )\le|x-y|$ by letting $k\to\fz$.
\end{proof}

Now we are ready to prove Proposition \ref{p3.1}.

\begin{proof}[Proof of Proposition \ref{p3.1}.]
Take $u(z)=\langle{\bf e_1}, z\rangle$ for $z\in\rn$.
For each $\dz\in(0,\,1)$ and each $x\in C^{(n)}_{\bf a}$,
by Lemma \ref{l3.1}, we have that
$$\lip_{d_{C^{(n)}_{\bf a},\,\dz}}u(x)\ge \limsup_{y\in x+\rr\bf e_1}\frac{|u(x)-u(y)|}{d_{C^{(n)}_{\bf a},\,\dz}(x,\,y)}
=
\limsup_{y\in x+\rr\bf e_1}\frac{|u(x)-u(y)|}{|x-y|}=1> {\sqrt{1-\dz}}|\nabla u(x)|,$$
which is as desired because the Cantor set $C_{\bf a}^{(n)}$ has positive
measure.
\end{proof}

From the above proof, we also conclude the following corollary.

\begin{cor}\label{c3.1}
Let $E$ be a closed subset of $\rn$. Assume that $E$ has positive measure and
empty interior.

 {\rm(}i{\rm)}  If there is a constant $C_E$ such that for each pair
$x,\,y\in \rn$
we can find a curve $\gz$ such that all of the curve except for perhaps
 countably many points lies in $\mathbb{R}^n\setminus E$ and $gz$ satisfies
$\ell_\rn(\gz)\le C_E|x-y|$,
then the intrinsic distance and differential sctructures associated to
$A_{E,\,\dz}=(1-\dz1_E)I_n$ with $\dz\in(\frac1{C_E},\,1)$,
do not coincide.

{\rm(}ii{\rm)} If there exists $\xi\in S^{n-1}$ such that for all $x\in E$,
we can find a sequence of $y\in x+\rr\xi$ satisfying $d_{E,\,\dz}(x,\,y)=|x-y|$,
then the intrinsic distance and differential structures associated to
$A_{E,\,\dz}=(1-\dz1_E)I_n$ with   $\dz\in(0,\,1)$,
do not coincide.
\end{cor}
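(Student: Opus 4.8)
The plan is to imitate the proof of Proposition~\ref{p3.1}: in each part I will produce a single \emph{linear} test function $u(z)=\langle\xi,z\rangle$ and show it violates coincidence on all of $E$. Write $A_{E,\dz}=(1-\dz 1_E)I_n$, so that $H(z,\xi)=(1-\dz 1_E(z))|\xi|^2$; since $A_{E,\dz}\le I_n$ we have $d_{E,\dz}\ge d_\rn$, whence every Euclidean-Lipschitz function belongs to $\lip_{d_{E,\dz}}(\rn)$, and moreover ellipticity makes $d_{E,\dz}$ locally comparable to the Euclidean distance. For $\xi\in S^{n-1}$ and $x\in E$ one has $H(x,\xi)=1-\dz$, while Lemma~\ref{l2.1} already gives $(\lip_{d_{E,\dz}}u)^2\ge H(\cdot,\nabla u)$ almost everywhere. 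Thus it suffices, in each part, to exhibit a fixed $\xi\in S^{n-1}$ with $(\lip_{d_{E,\dz}}\langle\xi,\cdot\rangle(x))^2>1-\dz$ for every $x\in E$; as $|E|>0$, this is the asserted failure of the identity $(\lip_{d_{E,\dz}}u)^2=H(\cdot,\nabla u)$, i.e.\ the non-coincidence of the two structures.

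For part~(ii), take $\xi$ as in the hypothesis and $u(z)=\langle\xi,z\rangle$. Fix $x\in E$; by assumption there are points $x\ne y_k\to x$ with $y_k-x\in\rr\xi$ and $d_{E,\dz}(x,y_k)=|x-y_k|$, so
\[
  \lip_{d_{E,\dz}}u(x)\ \ge\ \limsup_{k\to\fz}\frac{|\langle\xi,y_k-x\rangle|}{d_{E,\dz}(x,y_k)}\ =\ \limsup_{k\to\fz}\frac{|x-y_k|}{|x-y_k|}\ =\ 1\ >\ \sqrt{1-\dz}\ =\ \sqrt{H(x,\nabla u(x))}.
\]
Hence $(\lip_{d_{E,\dz}}u)^2>H(\cdot,\nabla u)$ on $E$, which proves part~(ii).

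For part~(i), fix the unit vector ${\bf e_1}$ and set $u(z)=\langle{\bf e_1},z\rangle$. The first step is to upgrade the curve hypothesis to the metric estimate $d_{E,\dz}(x,y)\le C_E|x-y|$ for all $x,y\in\rn$: along a curve $\gz$ joining $x$ to $y$ that meets $E$ in at most countably many points one has $H(z,\cdot)=|\cdot|^2$ for $\mathcal H^1$-almost every $z\in\gz$, so any admissible competitor $v$ for $d_{E,\dz}$ satisfies $|v(x)-v(y)|\le\ell_\rn(\gz)\le C_E|x-y|$ (the justification of the first inequality is the technical point discussed at the end); taking the supremum over all such $v$ gives the bound, exactly as in the proof of Lemma~\ref{l3.1}. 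Then, for $x\in E$ and $y=x+t{\bf e_1}$,
\[
  \lip_{d_{E,\dz}}u(x)\ \ge\ \limsup_{t\to0}\frac{|\langle{\bf e_1},y-x\rangle|}{d_{E,\dz}(x,y)}\ \ge\ \limsup_{t\to0}\frac{|t|}{C_E|t|}\ =\ \frac1{C_E},
\]
so $(\lip_{d_{E,\dz}}u(x))^2\ge C_E^{-2}$, which strictly exceeds $H(x,\nabla u(x))=1-\dz$ once $1-\dz<C_E^{-2}$, i.e.\ for $\dz$ in the indicated range. Since $|E|>0$, the intrinsic distance and differential structures of $A_{E,\dz}$ do not coincide.

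The one point that needs genuine care is the passage from the curve hypothesis to $d_{E,\dz}(x,y)\le C_E|x-y|$ in part~(i): an admissible competitor $v$ for $d_{E,\dz}$ only satisfies $|\nabla v|\le(1-\dz 1_E)^{-1/2}$ \emph{Lebesgue-almost everywhere} on $\rn$, and this pointwise bound need not remain valid $\mathcal H^1$-almost everywhere along the prescribed curve. This is precisely the difficulty resolved in the proof of Lemma~\ref{comp-dist}: one uses that $v$ is globally $(1-\dz)^{-1/2}$-Lipschitz (so that its restriction to any rectifiable curve is absolutely continuous) together with a Fubini-type perturbation of $\gz$, carried out inside the open set $\rn\setminus E$, which keeps the length within $\ez$ of $\ell_\rn(\gz)$ while arranging that the perturbed curve meets the exceptional set of $v$ in $\mathcal H^1$-measure zero; one then integrates $|\nabla v|$ along this curve, lets $\ez\to0$, and invokes the continuity of $v$. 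With this in hand, the remaining limsup computations are the same elementary ones as in Proposition~\ref{p3.1}.
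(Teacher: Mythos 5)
Your treatment of part~(ii) is correct and is essentially the paper's own argument from the proof of Proposition~\ref{p3.1}, applied verbatim to a general $E$. One implicit point worth making explicit: the hypothesis says ``a sequence of $y$'' without asserting $y\to x$, while your limsup requires $y_k\to x$; but a single $y\neq x$ with $d_{E,\dz}(x,y)=|x-y|$ already forces the $d_{E,\dz}$-geodesic from $x$ to $y$ to be the Euclidean segment (since $d_{E,\dz}\ge |\cdot-\cdot|$ and the $d_{E,\dz}$-length of any curve dominates its Euclidean length), hence $d_{E,\dz}(x,y')=|x-y'|$ for every $y'$ on that segment, and one may then send $y'\to x$.

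For part~(i), your reduction of the curve hypothesis to $d_{E,\dz}(x,y)\le C_E|x-y|$ is handled carefully, including the Fubini-type perturbation needed to integrate $|\nabla v|$ along a curve that may meet the Lebesgue-exceptional set of $v$. The gap is in the last step, where you misidentify the resulting range of $\dz$. From $\lip_{d_{E,\dz}}u(x)\ge 1/C_E$ with $u=\langle{\bf e_1},\cdot\rangle$ and $x\in E$, what you need is $1/C_E^2>1-\dz$, i.e.\ $\dz>1-1/C_E^2$, and you then write ``i.e.\ for $\dz$ in the indicated range''---but the indicated range is $\dz\in(1/C_E,1)$, not $\dz\in(1-1/C_E^2,1)$. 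These two thresholds coincide only when $C_E=(1+\sqrt5)/2$; for $C_E>(1+\sqrt5)/2$ (the typical case, since $C_E>1$ is forced and is often large) your argument proves the claim only on the strictly smaller interval $(1-1/C_E^2,1)$ and says nothing about $\dz\in(1/C_E,\,1-1/C_E^2]$. Note that the quasi-convexity hypothesis alone does not give the stronger conclusion $d_{E,\dz}(x,y)=|x-y|$ along a line through $x$ (which would yield non-coincidence for all $\dz\in(0,1)$, as in Proposition~\ref{p3.1}); that extra ingredient in the proof of Lemma~\ref{l3.1} uses special geometry of $C^{(n)}_{\bf a}$, namely points $x_k\perp{\bf e_1}$ arbitrarily near $x$ from which the ${\bf e_1}$-line misses the set, and is not available under the hypothesis of the corollary. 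So either the stated range should read $\dz\in(1-1/C_E^2,1)$, or a genuinely different argument is needed to cover $(1/C_E,1)$; in either case you should not silently equate the two thresholds.
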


\subsection{The large Sierpinski carpet $S_{\bf a}$}
Let ${\bf a}=\{a_j\}_{j\in\nn}$ with
$a_j\in\{\frac13,\,\frac15,\,\frac17,\cdots\}$, that is,
$a_j$ is the reciprocal
of an odd integer strictly greater than one.
A modified Sierpinski carpet $S_{\bf a}$
is constructed as follows.
First, divide the unit cube $T= [0, 1]^n$ into $a_1^{-n}$ essentially disjoint
closed congruent
subcubes, and remove the interior of the central one; denote the central one
by $T_{a_1^{-n}}$ and the others by $T_{k_1}$ with
$1\le k_1\le a_1^{-n}-1$.
When $m\ge2$, divide each $T_{k_1,\,\cdots,\,k_{m-1}}$
into $a_m^{-n}$ essentially disjoint closed congruent
subcubes and remove the interior of the central one;
denote the central one by $T_{k_1,\,\cdots,\,k_{m-1},\,a_m^{-n}}$ and the others by
$T_{k_1,\,\cdots,\,k_m}$ with $1\le k_m\le a_m^{-n}-1$.
For each $m\in\nn$, define the level $m$ precarpet by
$$S_{{\bf a},\,m}= \bigcup_{k_1}\cdots\bigcup_{k_m} T_{k_1,\,\cdots,\,k_m}.$$
The modified Sierpinski carpet $S_{\bf a }$ is defined as the limit of
precarpets $S_{{\bf a},\,m}$,
that is,  $S_{\bf a }=\cap_{m\in\nn}S_{{\bf a},\,m}.$

Obviously, $S_{\bf a }$ is closed, has empty interior, and $S_{\bf a }$ has
positive $n$-dimensional Lebesgue measure if and only if ${\bf a}\in\ell^n$.
Indeed, the $n$-dimensional Lebesgue measure of the precarpet
$S_{{\bf a},\,m}$ is $$ |S_{{\bf a},\,m}|
=( 1-a_1^{n})\cdots(1-a^{n}_{m}).$$
Thus, by taking logarithms, $|S_{\bf a}|= \lim_{m\to\fz} |S_{{\bf a},\,m}|>0$
if and only if ${\bf a}\in\ell^n$.

\begin{thm}\label{t3.1}
Let ${\bf a}=\{a_j\}_{j\in\nn}\in \ell^n$ with
$a_j\in\{\frac13,\,\frac15,\,\frac17,\cdots\}$.
Then for all $\dz \in(-\fz,\,1) $,
the associated intrinsic distance and differential structures of
$A_{S_{\bf a},\,\dz}$ do  coincide.
\end{thm}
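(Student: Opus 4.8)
The plan is as follows. For $\dz\le 0$ one has $A_{S_{\bf a},\,\dz}=(1-\dz1_{S_{\bf a}})I_n\in\mathscr A_{\rm wusc}(\rn)$, so Theorem \ref{t2.2} already gives the coincidence; henceforth fix $\dz\in(0,\,1)$ and write $A=A_{S_{\bf a},\,\dz}$, $E=S_{\bf a}$, $H(x,\,\xi)=(1-\dz1_E(x))|\xi|^2$ and $g=(1-\dz1_E)^{-1/2}$. Since the constraint $H(\cdot,\,\nabla u)\le 1$ lies between $|\nabla u|\le 1$ and $|\nabla u|\le(1-\dz)^{-1/2}$, comparing $A$ with the constant matrices $I_n$ and $(1-\dz)I_n$ gives $|x-y|\le d_A(x,\,y)\le|x-y|/\sqrt{1-\dz}$ for all $x,\,y$. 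By Lemma \ref{l2.1} it remains to prove $(\lip_{d_A}u(x))^2\le H(x,\,\nabla u(x))$ for a.e.\ $x$. At a point of differentiability of $u$, a standard estimate (cf.\ \eqref{e2.5}) gives $\lip_{d_A}u(x)\le|\nabla u(x)|\,\limsup_{y\to x}|x-y|/d_A(x,\,y)$; and at a.e.\ $x\notin E$ one has $A\equiv I_n$ on a ball about $x$ (as $E$ is closed), hence $d_A(x,\,y)=|x-y|$ for $y$ near $x$ and $\lip_{d_A}u(x)=|\nabla u(x)|=\sqrt{H(x,\,\nabla u(x))}$. Thus the theorem reduces to a single \emph{distance estimate}: $\limsup_{y\to x}|x-y|/d_A(x,\,y)\le\sqrt{1-\dz}$ for a.e.\ $x\in E$; equivalently, the bound $d_A(x,\,y)\le|x-y|/\sqrt{1-\dz}$ is asymptotically sharp at almost every point of the carpet.

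To bound $d_A$ from below I would use admissible competitors adapted to the geometry. Given $x\in E$ and $y$ near $x$, set $\nu=(y-x)/|x-y|$ and $t=|x-y|$, let $H_t$ be the hyperplane through $y$ orthogonal to $\nu$ (so $\dist(x,\,H_t)=t$, attained at $y$), and let $w(z)=\inf_\gz\int_\gz g\,ds$ be the $g$-weighted length-distance from $z$ to $H_t$. Then $w$ is Lipschitz, and since $g$ attains its maximum on $E$ while being locally constant $\equiv 1$ on the open set $E^c$, one checks $H(z,\,\nabla w(z))\le 1$ at every differentiability point $z$ of $w$; hence $w$ is admissible in the definition of $d_A$ and $d_A(x,\,y)\ge w(x)-w(y)=w(x)=\inf_{\gz\colon x\to H_t}\int_\gz g\,ds$. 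Using $g\equiv 1$ on $E^c$ and $g\equiv(1-\dz)^{-1/2}$ on $E$ we get $\int_\gz g\,ds=\ell_E(\gz)/\sqrt{1-\dz}+\ell_{E^c}(\gz)$, where $\ell_E(\gz),\ell_{E^c}(\gz)$ are the lengths $\gz$ spends in $E$, resp.\ $E^c$; moreover any $\gz$ with total length $\ell(\gz)\ge t/\sqrt{1-\dz}$ already has $\int_\gz g\,ds\ge\ell(\gz)\ge t/\sqrt{1-\dz}$. So the distance estimate reduces to a \emph{geometric estimate}: for a.e.\ $x\in E$ and each $\ez>0$ there is $\rho>0$ so that every curve $\gz$ from $x$ to $H_t$ with $0<t<\rho$ and $\ell(\gz)<t/\sqrt{1-\dz}$ makes net $\nu$-progress inside $E^c$ at most $\ez t$; then $\ell_E(\gz)\ge t-\ez t$ (since the total $\nu$-progress from $x$ to $H_t$ is $t$ and the carpet portion's $\nu$-progress is at most $\ell_E(\gz)$), whence $\int_\gz g\,ds\ge(1-\ez)t/\sqrt{1-\dz}$. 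As the paper indicates, one may alternatively arrive at the same estimate through Norris's approximation \cite{n97} of the intrinsic distance by the geodesic distances of smooth approximating metrics, which is the device that furnishes a tractable length-type representative of $d_A$ uniformly at all scales.

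For the geometric estimate one works over a full-measure \emph{good set} $G\subset E$ --- at least the Lebesgue density points of $E$ (automatically, for $x$ in a level-$m$ cube of side $r_m=\prod_{j\le m}a_j$, the removed level $m+1$ cube has side $a_{m+1}r_m$ with $a_{m+1}\to 0$, so it is small relative to $r_m$ for all large $m$). The essential geometric input, special to the carpet and absent for the product Cantor set of Proposition \ref{p3.1}, is that each removed cube is separated from every other removed cube by a layer of retained carpet of width comparable to its own side (the frame retained around a removed central cube is comparably thick at its level, and the frames of neighbouring parent cubes keep their removed cubes apart). For $x\in G$, small $t$, and a curve $\gz$ from $x$ to $H_t$ with $\ell(\gz)<t/\sqrt{1-\dz}$ (hence $\gz\subset B(x,\,Ct)$, a region in which all holes are small relative to $t$ and $x$ sits in the carpet), a single excursion of $\gz$ into a hole $Q$ changes its $\nu$-coordinate by at most $\diam Q$, and by the separation property any return to --- or passage between --- holes forces carpet travel of length at least comparable to the $\nu$-progress thereby regained; so the ``fast'' $\nu$-progress a short curve can buy from the holes near $x$ cannot be amortized below the straight carpet cost, and its total is $\le\ez t$. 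This gives the geometric estimate, hence the distance estimate, hence the theorem.

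The step I expect to be the main obstacle is the last one: turning ``the holes near a typical point of $S_{\bf a}$ are too small and too well-separated to be exploited'' into a quantitative bound on the $\nu$-progress that is uniform in $t$ and valid scale by scale. One must control simultaneously the sizes of all holes met between scales $|x-y|$ and $1$ --- which is where $\ell^n$-summability of ${\bf a}$, hence $a_j\to 0$, is used --- and the genuine separation of the holes by carpet, which is exactly the property that fails for $C^{(n)}_{\bf a}$, whose complementary slabs form channels along which curves slide freely and for which, correspondingly, the two structures do \emph{not} coincide. Having at hand a tractable, geodesic-type representative of $d_A$ at every scale, via Norris's approximation \cite{n97}, is what makes this estimate manageable.
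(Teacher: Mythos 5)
Your high-level structure matches the paper: reduce to a distance estimate $d_{S_{\bf a},\dz}(x,y)\ge (1-\ez)|x-y|/\sqrt{1-\dz}$ valid for $y$ near $x$ and for $x$ in a full-measure good subset of $S_{\bf a}$ (this is exactly the paper's Lemma~\ref{l3.2}), handle $\dz\le 0$ via Theorem~\ref{t2.2}, handle $x\notin S_{\bf a}$ trivially, and combine with Lemma~\ref{l2.1}. That reduction is sound. Your device for the lower bound --- build the $g$-weighted distance $w(z)=\inf_\gz\int_\gz g\,ds$ to the hyperplane $H_t$ through $y$, check that $w$ is admissible because $w$ is locally $1$-Lipschitz on the open set $S_{\bf a}^c$ and globally $(1-\dz)^{-1/2}$-Lipschitz, and conclude $d_A(x,y)\ge w(x)$ --- is a valid alternative to the paper's use of Norris's mollified metric, and your algebraic reduction to ``a short curve makes $\nu$-progress at most $\ez t$ in the holes'' is also correct.

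But this last ``geometric estimate'' \emph{is} the theorem, and you leave it as a heuristic. Two concrete gaps. First, the good set is not pinned down. ``Lebesgue density points'' is a plausible guess, but you neither verify that density points satisfy the property your estimate requires, nor is it what the paper actually uses: the paper excludes the faces $F$ of all removed cubes together with, for each integer $N$, the $\limsup$ set $E_N=\bigcap_m\bigcup_{\ell\ge m}E_{N,\ell}$, where $E_{N,\ell}$ is the $N$-subcube neighbourhood of the level-$\ell$ removed cube. That each $E_N$ is null is a Borel--Cantelli computation from $\sum_\ell a_\ell^n<\infty$, and the resulting property of a good $x$ (for every $N$, eventually $x$ is at least $N$ subcubes from the removed cube at every later scale) is precisely the quantitative ``well inside the carpet at all small scales'' input that the estimate needs. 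Second, the heuristic ``an excursion into a hole must be paid for by carpet travel on return'' is not turned into a bound that sums over scales, and this is where the real difficulty lies. The paper instead passes to the Norris mollification $d_{S_{\bf a},\dz,t}$, exploits its $L^2$ variational formula over curves, reparametrizes the near-minimizer $\gz_{x,y,t}$ by Euclidean arclength, and then estimates scale by scale (Steps~4--5 of the proof of Lemma~\ref{l3.2}) the one-dimensional measure of the set of times $s$ at which $\gz_{x,y,t}(s)$ lies in the doubled removed cubes; the counting of hit cubes per level produces a geometric series governed by $a_{i+1}$, so $a_j\to0$ (forced by $\ell^n$ summability) enters essentially. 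Your sketch identifies the correct intuition --- that the holes near a good point of $S_{\bf a}$ are too small and too widely separated by carpet to be exploited, in contrast to the channels in $\rn\setminus C^{(n)}_{\bf a}$ --- but does not carry out the bookkeeping needed to make the total cheap $\nu$-progress, summed over all scales between $|x-y|$ and a fixed scale, come out to at most $\ez\,|x-y|$.

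So: correct reduction, correct intuition, and a legitimately different (and cleaner) route to the lower bound via an admissible weighted-distance competitor rather than mollification. But the quantitative heart of the proof --- the correct good set and the scale-by-scale estimate showing short curves cannot amortize hole-travel --- is missing.
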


We employ the following geometric property
of $S_{\bf a}$ to prove Theorem \ref{t3.1}. We only
need to consider the case $0<\delta<1$.

\begin{lem}\label{l3.2}
Let $\dz\in(0,\,1)$ and ${\bf a}=\{a_j\}_{j\in\nn}\in \ell^n$ with
$a_j\in\{\frac13,\,\frac15,\,\frac17,\cdots\}$.
Then there exists a subset $E\subset S_{\bf a}$ with measure zero such that
for any $\ez>0$ and each $x\in S_{\bf a}\setminus E$, we can find
$r=r(x,\,\dz,\,\ez)>0$
 which satisfies: for all $y\in B(x,\,r)$,
\begin{equation}\label{e3.x5}
d_{S_{\bf a},\,\dz}(x,\,y)\ge (1-\ez)\frac1{\sqrt{1-\dz}}|x-y|.
\end{equation}
\end{lem}

With Lemma \ref{l3.2}, we can prove Theorem \ref{t3.1} easily.
\begin{proof}[Proof of Theorem \ref{t3.1}.]

 Obviously, Lemma \ref{l2.1}  yields
$H(x,\,\nabla u(x))\le (\lip_{d_{S_{\bf a},\,\dz}}u(x))^2$ for almost all $x\in\boz$.
We now need to show that
$(\lip_{d_{S_{\bf a},\,\dz}}u(x))^2\le H(x,\,\nabla u(x))$ almost everywhere.
To this end, it suffices consider the cases $x\in\rr^n\setminus S_{\bf a}$ and
$x\in  E\subset S_{\bf a}$,
where $E$ is as in Lemma \ref{l3.2}.

\noindent {\it Case 1:} $x\in\rr^n\setminus S_{\bf a}$. It suffices to show that
if $r<d_{\rn}(x,\, S_{\bf a})/2$ and $y\in B(x,\,r)$  we have
\begin{equation}\label{e3.xx1}
d_{S_{\bf a},\,\dz}(x,\,y) =  |x-y|.
\end{equation} Indeed, \eqref{e3.xx1} will give
\[
H(x,\,\nabla u(x))=|\nabla u(x)|^2= (\lip\, u(x))^2 = (\lip_{d_{S_{\bf a},\,\dz}}u(x))^2
\]
by the definition of the pointwise Lipschitz constant.
The verification of~\eqref{e3.xx1} is done as in the proof of
Lemma~\ref{comp-dist},
with $\lambda:\mathbb{R}^n\to[1,\infty)$ given as a continuous function that
satisfies
$\lambda(y)=1$ when $y\in B(x,3r/2)$ and $\lambda(y)=1/\sqrt{1-\delta}$ when
$y\in S_{\bf a}$.

\noindent {\it Case 2:} $x\in S_{\bf a}\setminus E$.
In this case, \eqref{e3.x5} implies that
$$\limsup_{y\to x}  \frac{|y-x|}{d_{S_{\bf a}}(x,y)}\le \sqrt{1-\delta}.$$
With this, if $u$ is differentiable at $x$,   we have
\begin{align*}
 \text{Lip}_{d_{S_{\bf a}}}u(x)&=
 \limsup_{y\to x}\frac{|u(x)-u(y)|}{d_{S_{\bf a}}(x,y)}\\
 &\le \limsup_{y\to x}\frac{|u(x)-u(y)-\langle \nabla u(x),y-x\rangle|}{d_{S_{\bf a}}(x,y)}+
 \limsup_{y\to x}\frac{|\langle \nabla u(x),y-x\rangle|}{d_{S_{\bf a}}(x,y)}\\
    & \le0\cdot\limsup_{y\to x}  \frac{|y-x|}{d_{S_{\bf a}}(x,y)}+|\nabla u(x)|\limsup_{y\to x}  \frac{|y-x|}{d_{S_{\bf a}}(x,y)}\\
    &\le \sqrt{1-\delta}\, |\nabla u(x)|=\sqrt{H(x,\nabla u(x))}.
\end{align*}
This proves Theorem \ref{t3.1}.
\end{proof}

Finally, we prove Lemma \ref{l3.2}.
Notice that Lemma \ref{l3.2} is much stronger than Lemma~\ref{comp-dist};
see Remark \ref{r3.x1} below.
The proof of Lemma \ref{l3.2} relies on the following approximation of distance
established by Norris \cite{n97}. Let $\Phi\in C_c^\fz(\rn)$ be such that
$\int_\rn \Phi(x)\,dx=1$, ${\rm supp}\,\Phi\subset B(0,\,2)$ and
$0\le \Phi(x)\le1$ for all $x\in\rn$.
For $t>0$, let $\Phi_t(x)=t^{-n}\Phi(t^{-1}x)$.
Standard analysis arguments show that $\Phi_t\ast f \to f$ almost
everywhere when $f\in L^1_\loc(\rn)$.
The following lemma is due to Norris~\cite{n97}.

\begin{lem}\label{l3.3}
Let $A_{E,\,\dz,\,t}=[\Phi_t\ast (\frac{1}{1-\dz1_{E}})]^{-1}I_n$ and denote by
$d_{E,\,\dz,\,t}$ the associated intrinsic distance.
Then $d_{E,\,\dz,\,t}(x,\,y)\to d_{E,\dz}(x,\,y)$ as $t\to0$ for all
$x,\,y\in\rn$.
Moreover,
$$(d_{E,\,\dz,\,t}(x,\,y))^2=\inf_{\gz}\int_0^1\Phi_t\ast
\lf(\frac{1}{1-\dz 1_{E}}\r) (\gz(s))\lf|\frac d{ds}\gz(s)\r|^2\,ds.$$
\end{lem}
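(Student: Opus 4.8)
The plan is to reduce the statement to the conformal case. Throughout, write $\rho:=(1-\dz 1_E)^{-1}$ and $\rho_t:=\Phi_t\ast\rho$, so that $A_{E,\dz,t}=\rho_t^{-1}I_n$, one has $1\le\rho_t,\,\rho\le(1-\dz)^{-1}$, each $\rho_t$ is smooth, and $\rho_t\to\rho$ both almost everywhere and in $L^p_\loc(\rn)$ for every $p<\fz$; moreover a function $u\in C(\rn)\cap W^{1,2}_\loc(\rn)$ is admissible in the definition of $d_{E,\dz,t}$ (respectively $d_{E,\dz}$) precisely when $|\nabla u|^2\le\rho_t$ (respectively $|\nabla u|^2\le\rho$) almost everywhere. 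For the last displayed identity I would first identify $d_{E,\dz,t}(x,y)$ with the conformal length distance $L_t(x,y):=\inf_\gz\int_0^1\sqrt{\rho_t(\gz(s))}\,|\gz'(s)|\,ds$, the infimum over curves $\gz\colon[0,1]\to\rn$ joining $y$ to $x$. The inequality $d_{E,\dz,t}(x,y)\le L_t(x,y)$ is immediate: integrating $\langle\nabla u,\gz'\rangle$ along $\gz$ and using $|\nabla u|\le\sqrt{\rho_t}$ gives $u(x)-u(y)\le\int_0^1\sqrt{\rho_t(\gz)}\,|\gz'|\,ds$ for every admissible $u$. For the reverse one takes $u(z):=L_t(y,z)$: comparison with straight segments shows $u$ is locally Lipschitz, and since $\sqrt{\rho_t}$ is continuous, a Lebesgue-point argument yields the eikonal bound $|\nabla u|\le\sqrt{\rho_t}$ a.e., so $u$ is admissible and $d_{E,\dz,t}(x,y)\ge u(x)-u(y)=L_t(x,y)$. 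Finally Cauchy--Schwarz gives $\big(\int_0^1\sqrt{\rho_t(\gz)}\,|\gz'|\,ds\big)^2\le\int_0^1\rho_t(\gz)\,|\gz'|^2\,ds$, with equality once $\gz$ is reparametrized to constant $\sqrt{\rho_t}$-speed; as the length functional is reparametrization invariant, $L_t(x,y)^2=\inf_\gz\int_0^1\rho_t(\gz(s))\,|\gz'(s)|^2\,ds$, which is the asserted formula.

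For the convergence $d_{E,\dz,t}(x,y)\to d_{E,\dz}(x,y)$ I would treat the two one-sided inequalities separately. The bound $\liminf_{t\to0}d_{E,\dz,t}(x,y)\ge d_{E,\dz}(x,y)$ is the easy half: given any $u$ admissible for $d_{E,\dz}$, set $u_t:=\Phi_t\ast u$; Jensen's inequality for the probability measure $\Phi_t(z-\cdot)\,dw$ gives $|\nabla u_t|^2=|\Phi_t\ast\nabla u|^2\le\Phi_t\ast|\nabla u|^2\le\Phi_t\ast\rho=\rho_t$, so $u_t$ is admissible for $d_{E,\dz,t}$, and since $u_t\to u$ locally uniformly, $d_{E,\dz,t}(x,y)\ge u_t(x)-u_t(y)\to u(x)-u(y)$; taking the supremum over admissible $u$ finishes this direction. (Note it is precisely this step that explains why one mollifies $\rho=(1-\dz1_E)^{-1}$ rather than $1-\dz1_E$.)

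The opposite bound $\limsup_{t\to0}d_{E,\dz,t}(x,y)\le d_{E,\dz}(x,y)$ is where I expect the real work. I would choose $t_k\to0$ realizing the $\limsup$ and, for each $k$, an admissible $u_k$ for $d_{E,\dz,t_k}$ with $u_k(y)=0$ and $u_k(x)-u_k(y)\ge d_{E,\dz,t_k}(x,y)-1/k$. Since $|\nabla u_k|^2\le\rho_{t_k}\le(1-\dz)^{-1}$, the $u_k$ are uniformly Lipschitz and locally uniformly bounded, so after passing to a subsequence $u_k\to u_*$ locally uniformly with $u_*$ Lipschitz (hence $u_*\in C(\rn)\cap W^{1,2}_\loc(\rn)$) and $\nabla u_k\rightharpoonup\nabla u_*$ weakly in $L^2_\loc(\rn)$. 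The crucial step is passing the pointwise constraint to the limit: for every $\phi\in C^\fz_c(\rn)$ with $\phi\ge0$, weak lower semicontinuity of $v\mapsto\int|v|^2\phi\,dx$ together with $\int\rho_{t_k}\phi\,dx\to\int\rho\phi\,dx$ (dominated convergence, using $\rho_{t_k}\le(1-\dz)^{-1}$ and $\rho_{t_k}\to\rho$ a.e.) yields $\int|\nabla u_*|^2\phi\,dx\le\liminf_k\int|\nabla u_k|^2\phi\,dx\le\int\rho\phi\,dx$; since $\phi$ is arbitrary, $|\nabla u_*|^2\le\rho$ a.e., so $u_*$ is admissible for $d_{E,\dz}$ and $d_{E,\dz}(x,y)\ge u_*(x)-u_*(y)=\lim_k\big(u_k(x)-u_k(y)\big)\ge\limsup_{t\to0}d_{E,\dz,t}(x,y)$, which with the previous paragraph gives the convergence. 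The main obstacle is exactly this lower-semicontinuity passage -- guaranteeing that the weak limit of near-optimal test functions still satisfies the \emph{discontinuous} constraint $|\nabla u_*|^2\le\rho$ -- and it is here that the uniform $L^\fz$-bound on the $\rho_t$ together with the almost-everywhere (equivalently $L^1_\loc$) convergence $\rho_t\to\rho$, rather than mere weak convergence, are indispensable; a minor further technicality is the eikonal inequality used in the identity, which relies on the continuity of $\rho_t$.
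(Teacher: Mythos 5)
The paper gives no internal proof of this lemma: it is attributed outright to Norris \cite{n97} ("The following lemma is due to Norris"), so there is no in-paper argument to compare against, only a citation. Your reconstruction is correct and self-contained, and proceeds along the route one would expect. For the identity you identify $d_{E,\dz,t}$ with the conformal length distance of the continuous, bounded weight $\rho_t=\Phi_t\ast(1-\dz1_E)^{-1}$, and then pass between the length functional $\int\sqrt{\rho_t}|\gz'|\,ds$ and the energy functional $\int\rho_t|\gz'|^2\,ds$ via Cauchy--Schwarz and constant $\sqrt{\rho_t}$-speed reparametrization; this is the classical mechanism, and it hinges precisely on the continuity of $\rho_t$ both for the eikonal bound $|\nabla L_t(y,\cdot)|\le\sqrt{\rho_t}$ a.e.\ and for propagating the a.e.\ constraint $|\nabla u|\le\sqrt{\rho_t}$ along curves. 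For the convergence, the $\liminf$ direction via $u\mapsto\Phi_t\ast u$ and Jensen is clean, and your parenthetical remark that this is what forces one to mollify $(1-\dz1_E)^{-1}$ rather than $1-\dz1_E$ is exactly the right observation (Jensen gives $\Phi_t\ast\rho^{-1}\ge(\Phi_t\ast\rho)^{-1}$, the wrong direction). The $\limsup$ direction -- Arzel\`a--Ascoli on uniformly Lipschitz near-extremals, weak convergence of gradients, then weak lower semicontinuity of the weighted Dirichlet energy combined with $\int\rho_{t_k}\phi\,dx\to\int\rho\phi\,dx$ by dominated convergence -- is the genuine content, and you correctly single out that the \emph{strong} (a.e.\ together with a uniform bound, equivalently $L^1_\loc$) convergence $\rho_{t_k}\to\rho$, rather than mere weak convergence, is what allows the pointwise constraint $|\nabla u_*|^2\le\rho$ to survive in the limit. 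One small point worth making fully explicit: the uniform Lipschitz bound on $u_k$, together with the distributional identity $\int\nabla u_k\,\phi=-\int u_k\nabla\phi$ and Banach--Alaoglu, gives weak-* $L^\fz_\loc$ (hence weak $L^2_\loc$) convergence $\nabla u_k\to\nabla u_*$, which is what the lower-semicontinuity step needs; you effectively say this, and the argument is sound.
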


\begin{proof}[Proof of Lemma \ref{l3.2}.]
We divide the proof into 6 steps.

\noindent {\it Step 1.} Recall that for each $m$,
 $T_{k_1,\,\cdots,\,k_{m-1},\,a_m^{-n}}$ is the central cube whose interior has been
removed  from the cube $T_{k_1,\,\cdots,\,k_{m-1}}$
at step $m$
when constructing the Sierpinski Carpet $S_{\bf a}$.
For each fixed $N\in\nn\cup\{0\}$, denote by $J _{N,\,m}$ the collection of all
 $(k_1,\,\cdots,\,k_{m-1},\,j)$  such that
 $T_{k_1,\,\cdots,\,k_{m-1},\,j}$   is $N$-close to
the central cube $T_{k_1,\,\cdots,\,k_{m-1},\,a_m^{-n}}$ in the sense that
there exists a sequence $i_0,\,i_1,\,\cdots,\,i_M$ with $1\le M\le N$ such
that $i_0=j$, $i_M=a_m^{-n}$,
$i_l\ne i_s$ if $i\ne s$,
and for $0\le s<M-1$,
$$T_{k_1,\,\cdots,\,k_{m-1},\,i_{s}}\bigcap T_{k_1,\,\cdots,\,k_{m-1},\,i_{s+1}}\ne\emptyset.$$
Let $$E _{N,\,m}=\bigcup_{(k_1,\,\cdots,\,k_{m-1},\,j)\in J_{N,\,m}}T_{k_1,\,\cdots,\,k_{m-1},\,j}.$$
If $N\ge a_m^{-1}$, then $E_{N,\,m}=T_{k_1,\,\cdots,\,k_{m-1}}$.
Recall that we assume ${\bf a}\in\ell^n$.
It follows that for sufficiently large $m$ we have $N<a_m^{-1}$.
In this case, we see that
$E_{N,\,m}$ is a cube centered at the center of
$T_{k_1,\,\cdots,\,k_{m-1},a_m^{-n}}$ of edge length the
$(2N+1)\,a_m$-fraction of the edge length of the cube $T_{k_1,\,\cdots,\,k_{m-1}}.$
Observe that this
fraction tends to zero as $m\to\infty$.
We set
\[
E_N=\bigcap_{m\in\nn}\ \bigcup_{\ell\ge m}E_{N,\,\ell}=\bigcap_{\nn\ni m>N}\ \bigcup_{\nn\ni \ell\ge m}E_{N,\,\ell},
\]
Let $F$ be the union of all the $(n-1)$-dimensional faces of all the cubes
that were removed in the construction of
$S_{\bf a}$, and
\[
  E=F\cup\left(\bigcup_{N\in\nn}E_N\right).
\]
We claim that $|E|=0$.  It is easy to see that $|F|=0$.
>From the above discussion,
$$|E_{N,\,m}|\le (2N+1)^n(1-a_1^n)\cdots(1-a_{m-1}^n)a_m^n.$$
>From this, it follows that
\begin{eqnarray*}
\lf|\bigcup_{\ell\ge m}E_{N,\,\ell}\r|
   &&\le (2N+1)^n\sum_{\ell\ge m} (1-a_1^n)\cdots(1-a_{\ell-1}^n)a_\ell^n\le (2N+1)^n \sum_{\ell\ge m}  a_\ell^n,
\end{eqnarray*}
which converges to zero as $m\to\fz$ because ${\bf a}\in\ell^n$.
This implies that $E_N$ with $N\in\nn$, and hence $E$, has measure zero.

\noindent {\it Step 2.}
For any $\ez>0$, we choose $\wz N_\ez,\,N_\ez\in\nn$ such that
$$\wz N_\ez\ge \frac {100^nn^{2n}}{(1-\dz)\ez} $$
and  $N_\ez\ge(\wz N_\ez)^{n+1}$.
For each fixed $x\in S_{\bf a}\setminus E$, recall that $x\in S_{\bf a}\setminus E_N$ for all $N\in\nn$.
Since  $$S_{\bf a}\setminus E_{N_\ez}= \lf [S_{\bf a}\setminus \lf(\bigcap_{m\in\nn}\bigcup_{\ell\ge m}E_{N_\ez,\,\ell}\r)\r]
=\bigcup_{m\in\nn}  \lf[S_{\bf a}\setminus \lf(\bigcup_{\ell\ge m}E_{N_\ez,\,\ell}\r)\r]$$
there exists an  $m_x\in\nn$ such that $x\in  S_{\bf a}\setminus (\bigcup_{\ell\ge m}E_{N_\ez,\,\ell})$
for all $m\ge m_x$.  We also let $r_x>0$ be the Euclidean
distance from $x$ to the union of all removed
$T_{k_1,\,\cdots,\,k_{m-1},\,a_{m}^{-n}}$ with $m\le m_x-1$.
Since $x\not\in F$, we see that $r_x>0$.
Because ${\bf a}\in\ell^n$, we can further  find $\wz m_x\ge m_x$ such that for all
 $m\ge \wz m_x$,
 \begin{equation}\label{e3.x8}
 a_m\le (1-\dz)r_x/{2N^2_\ez}.
 \end{equation}

  {\it Step 3.}
For each $m> \wz m_x$ and $$y\in B(x,\,\wz N_\ez a_1\cdots a_m)\setminus \overline {B(x,\,\wz N_\ez a_1\cdots a_{m+1})},$$
we are going to estimate  $d_{S_{\bf a},\,\dz}(x,\,y)$ from below.
By Lemma \ref{l3.3}, we know that $d_{S_{\bf a},\,\dz,\,t}(x,\,y)$ converges to $d_{S_{\bf a},\,\dz}(x,\,y)$
as $t\to0$. So it suffices
to estimate $d_{S_{\bf a},\,\dz,\,t}(x,\,y)$ (for simplicity, we denote this quantity $d_t(x,\,y)$) from below
for all sufficiently small $t$.
By Lemma \ref{l3.3} again, we have
$$(d_t(x,\,y))^2=\inf_{\gz}\int_0^1\Phi_t\ast \lf(\frac{1}{1-\dz 1_{S_{\bf a}}}\r) (\gz(s))\lf|\frac d{ds}\gz(s)\r|^2\,ds$$
and  for each $t\in (0,\,1)$,
we can find a rectifiable curve $\gz_{\{x,\,y,\,t\}}$ joining $x$ and $y$ such that the above infimum is reached,
that is

\begin{eqnarray*}
 (d_t(x,\,y))^2 &&= \int_0^1\Phi_t\ast \lf(\frac1{1-\dz1_{S_{\bf a}}}\r)(\gz_{\{x,\,y,\,t\}}(s))\lf|\frac d{ds}\gz_{\{x,\,y,\,t\}}(s)\r|^2\,ds,
 \end{eqnarray*}
and hence,
by H\"older's inequality,
\begin{eqnarray*}
 d_t(x,\,y)
 &&\ge \int_0^1\sqrt{\Phi_t\ast\lf(\frac1{1-\dz1_{S_{\bf a}}}\r)(\gz_{\{x,\,y,\,t\}}(s)) }\lf|\frac d{ds}\gz_{\{x,\,y,\,t\}}(s)\r|\,ds.
\end{eqnarray*}
Observe that for each $t\in(0,\,1)$ and every pair $z,\,w\in\rn$,
$$ |z-w|\le d_t(z,\,w)\le \frac1{1-\dz}|z-w|,$$
which follows from $$1\le \Phi_t\ast \lf(\frac1{1-\dz1_{S_{\bf a}}}\r)\le  \frac1{1-\dz}.$$
Hence the curves $\gamma_{x,y,t}$ are Lipschitz with respect to the
Euclidean metric under a suitable parametrization,
and moreover,  with a normalization, we can assume that for the Euclidean
derivative,
$\lf|\frac d{ds}\gz_{\{x,\,y,\,t\}}(s)\r|=1$
for almost all $s\in[0,\,\ell_{\rn}(\gz_{\{x,\,y,\,t\}})]$.  Hence
  \begin{eqnarray*}
 d_t(x,\,y)  \ge
   \int_0^{\ell_{\rn}(\gz_{\{x,\,y,\,t\}})}
 \sqrt{\Phi_t\ast \lf(\frac1{1-\dz1_{S_{\bf a}}}\r)(\gz_{\{x,\,y,\,t\}}(s))}\,ds.
\end{eqnarray*}

{\it Step 4.} To estimate $d_t(x,\,y)$,
we only need to know    the length of the set
$$L_t=\lf\{s\in[0,\,\ell_\rn(\gz_{\{x,\,y,\,t\}})]:\, \Phi_t\ast \lf(\frac1{1-\dz1_{S_{\bf a}}}\r)(\gz_{\{x,\,y,\,t\}}(s))\ge (1-\ez){\frac1{1-\dz}}\r\}.$$
To this end,  observe that  if $t=a_1\cdots a_\ell$ for any large $\ell >m$
and $z\in B(x,\, \frac {\wz N_\ez}{1-\dz} a_1\cdots a_m)$ but does not belong
to the double enlargement of the (removed) cube
$T_{k_1,\,\cdots,\,k_{i-1},\,a_i^{-n}}$ with $m\le i\le \ell$, by $N_\ez\ge 4\wz N_\ez\frac1{(1-\dz)}$,
we have
\begin{equation}\label{e3.x7}
B(z,\,2t)\subset B\lf(x,\, \frac {4\wz N_\ez}{1-\dz} a_1\cdots a_m\r)\subset B(x,\, N_\ez a_1\cdots a_m).
\end{equation}
Hence
 \[
    \Phi_t\ast \lf(\frac1{1-\dz1_{S_{\bf a}}}\r)(z)
   \ge \frac1{1-\dz}\left[1- \frac{c_n}{|B(z,\,t)|}\lf|B(z,\,2t)\bigcap\lf(\bigcup_{j\ge\ell}
 T_{k_1,\,\cdots,\,k_{j-1},\,a_j^{-n}}\r) \r|\right].
\]
Note that ${\bf a}\in\ell^n$ implies $|\bigcup_{j\ge\ell}T_{k_1,\,\cdots,\,k_{j-1},\,a_j^{-n}} |\to 0$
as $\ell\to\fz$. For every $\ez>0$ there exists $\ell_0\in\nn$ which depends only on
$\ez$
such that
for all $\ell\ge\ell_0$,
$$\lf|\bigcup_{j\ge\ell}T_{k_1,\,\cdots,\,k_{j-1},\,a_j^{-n}} \r|\le \ez\, |B(z,\,t)|.$$
Therefore, if $\ell >\max\{\ell_0,\,m\}$, for the above $z$, we have
$$\Phi_t\ast \lf(\frac1{1-\dz1_{S_{\bf a}}}\r)(z)\ge \frac{1-\ez}{1-\dz}.$$

On  the other hand,
by the choice of $r_x$ and $N_\ez$ at Step 2, when
$t= a_1\cdots a_\ell\le a_\ell\le   r_x/10$
we have $\ell_{\rn}(\gz_{\{x,\,y,\,t\}})\le r_x/10$.
So  for $i\le \wz m_x-1$,
\[
  \text{dist}_{\mathbb{R}^n}(\gz_{\{x,y,t\}}, T_{k_1,\,\cdots,\,k_{i-1},\,a_i^{-n}})\ge
      \text{dist}_{\mathbb{R}^n}(x, T_{k_1,\,\cdots,\,k_{i-1},\,a_i^{-n}})\, -\, \ell_{\rn}(\gz_{\{x,\,y,\,t\}})\ge 2t.
\]
 Therefore
  $T_{k_1,\,\cdots,\,k_{i-1},\,a_i^{-n}}$  makes no contribution when we estimate
 $\Phi_t\ast \lf(\frac1{1-\dz1_{S_{\bf a}}}\r)(z)$ from below for $z\in\gz_{\{x,\,y,\,t\}}$.
This also holds for $\wz m_x\le i\le m-1$ by a similar argument.
Indeed, for $\wz m_x\le i\le m-1$,
we also have
$t= a_1\cdots a_\ell\le    a_1\cdots a_i/10$ and
$\ell_{\rn}(\gz_{\{x,\,y,\,t\}})\le  a_1\cdots a_i/10$,
and hence, in this case the  Euclidean distance from
$\gz_{\{x,\,y,\,t\}}$  to each $T_{k_1,\,\cdots,\,k_{i-1},\,a_i^{-n}}$ is at least $2t$.

Based on the above argument, the lower bound estimate of the length of $L_t$
is transferred to the upper bound estimate of the length of
$$\wz L_t=\lf\{s\in[0,\,\ell_\rn(\gz_{\{x,\,y,\,t\}})]\, :\, \gz_{\{x,\,y,\,t\}}(s)\in \bigcup_{m\le i\le \ell-1}\,
\bigcup_{k_1,\,\cdots,\,k_i}
2T_{k_1,\,\cdots,\,k_{i-1},\,a_i^{-n}}\r\}.$$
Here, $\ell$ is the positive integer such that $t=a_1\cdots a_\ell$; keep in mind that $\ell>m$.

{\it Step 5.} To estimate $\wz L_t$, we need the following key observations.

\noindent (i)  Since
  $ |x-y|\le d_t(x,\,y)\le  \frac1{1-\dz}|x-y|$,   we have
\begin{equation}\label{e3.x6}
 \gz_{\{x,\,y,\,t\}}\subset
  B\left(x,\, \frac {\wz  N_\ez}{1-\dz} a_1\cdots a_m\right).
\end{equation}
Recall that $x$ is not in any
$  N_\ez$-close cube of $T_{k_1,\,\cdots,\,k_{m-1},\,a_m^{-n}}$ whenever $m\ge m_x$.
Since $N_\ez\ge 2n\wz N_\ez\frac 1{(1-\dz)^2}$,     we have that
$$2T_{k_1,\,\cdots,\,k_{m-1},\,a_m^{-n}}\cap \gz_{\{x,\,y,\,t\}}=\emptyset.$$

\noindent (ii)
If $|x-y|\le {N_\ez}a_1\cdots a_{m+1} $, then by \eqref{e3.x8},
$|x-y|\le  a_1\cdots a_m$, and hence there are at most
  $2^n$ many  cubes $T_{k_1,\,\cdots,\,k_m}$ with $k_m<a_{m+1}^{-n}$   that
  overlaps with $\gz_{\{x,\,y,\,t\}}$, and hence,
there are at most  $2^n$ many $T_{k_1,\,\cdots,\,k_m,\,a_{m+1}^{-n}}$ such that its twice-enlargement
overlapped with $\gz_{\{x,\,y,\,t\}}$. Moreover, up to a modification of the curve
$\gz_{\{x,\,y,\,t\}}$ without increasing the $d_{t}$-length of $\gz_{\{x,\,y,\,t\}}$,
we may assume that the Euclidean length of
$\gz_{\{x,\,y,\,t\}}\cap 2T_{k_1,\,\cdots,\,k_m,\,a_{m+1}^{-n}}$ is less than $4\sqrt n a_1\cdots a_{m+1}$.
Thus by $|x-y|\ge\wz N_\ez a_1\cdots a_{m+1} $,

\begin{eqnarray*}
\wz L_{t,\,m}&&=\lf|\lf
\{s\in[0,\,\ell_\rn(\gz_{\{x,\,y,\,t\}})],\,  \gz_{\{x,\,y,\,t\}}(s)\in
\bigcup_{k_1,\,\cdots,\,k_m}
2T_{k_1,\,\cdots,\,k_{m},\,a_{m+1}^{-n}}\r\}\r|\\
&&\le 4\sqrt n 2^n a_1\cdots a_{m+1} \le   4\sqrt n 2^n\frac1{\wz N_\ez}|x-y|\le\ez|x-y|
\end{eqnarray*}

If $ {N_\ez}a_1\cdots a_{m+1}\le |x-y|\le {\wz N_\ez}a_1\cdots a_{m}$, since
$\ell_{\rn}(\gz_{\{x,\,y,\,t\}})\le \frac{\wz N_\ez}{1-\dz}a_1\cdots a_{m}$,
  there are at most $(2\frac {\wz N_\ez}{1-\dz})^n$ many $T_{k_1,\,\cdots,\,k_m}$ with $k_m<a_{m}^{-n}$
such that $$T_{k_1,\,\cdots,\,k_m}\cap \gz_{\{x,\,y,\,t\}}\ne\emptyset,$$
and hence, at most $(2\frac {\wz N_\ez}{1-\dz})^n$ many
$T_{k_1,\,\cdots,\,k_m,\,a_{m+1}^{-n}}$ such that their twice-enlargement
overlap  with $\gz_{\{x,\,y,\,t\}}$. Notice that
$4\sqrt n(2\frac {\wz N_\ez}{1-\dz})^n\le \ez N_\ez$ and
$a_{m+1}\le (1-\dz)r_x/{2N^2_\ez}\le 1/{2N^2_\ez}$. With a similar argument on $\gz_{\{x,\,y,\,t\}}$ as above,
  we have
\begin{eqnarray*}
\wz L_{t,\,m}&&=\lf|\lf
\{s\in[0,\,\ell_\rn(\gz_{\{x,\,y,\,t\}})],\,  \gz_{\{x,\,y,\,t\}}(s)\in
\bigcup_{k_1,\,\cdots,\,k_m}
2T_{k_1,\,\cdots,\,k_{m},\,a_{m+1}^{-n}}\r\}\r|\\
&&\le 2\sqrt n (2\frac N{1-\dz})^n a_1\cdots a_{m+1} \le 2\sqrt n (2\frac N{1-\dz})^n \frac1{N_\ez}|x-y|\le\ez|x-y|
\end{eqnarray*}

\noindent (iii) For each $i\ge m+1$, the numbers of $T_{k_1,\,\cdots,\,k_i,\,a_{i+1}^{-n}}$, whose  twice-enlargement
overlapped with $\gz$, is no more that   the numbers
of $T_{k_1,\,\cdots,\,k_{i}}$ with  $k_{i}< a_{i}^{-n}$ which overlaps  with $\gz$.
By induction and similar argument as above,
\begin{eqnarray*}
\wz L_{t,\,i}&&=\lf|\lf
\{s\in[0,\,\ell_\rn(\gz_{\{x,\,y,\,t\}})],\,  \gz_{\{x,\,y,\,t\}}(s)\in
\bigcup_{k_1,\,\cdots,\,k_i}
2T_{k_1,\,\cdots,\,k_{i},\,a_{i+1}^{-n}}\r\}\r|\\
&&
\le 8\sqrt na_{i+1}\lf|\lf
\{s\in[0,\,\ell_\rn(\gz_{\{x,\,y,\,t\}})],\,  \gz_{\{x,\,y,\,t\}}(s)\in
\bigcup_{k_1,\,\cdots,\,k_i}
 T_{k_1,\,\cdots,\,k_{i}}\r\}\r| \\
 && \le (8\sqrt n)^{i-m }a_{i+1}\cdots a_{m+1}\ez|x-y|.
\end{eqnarray*}

{\it Step 6.}
The three observations above yield that
$$|\wz L_t|\le\sum_{i= m}^{\ell} \wz L_{t,\,i}\le \ez|x-y|+
\sum_{i\ge m+1}(8\sqrt n)^{i-m } a_{i+1}\cdots a_{m+1}\ez|x-y|\le  3\ez|x-y|,$$
 and hence,
$$|L_t|\ge \ell_\rn(\gz_{\{x,\,y,\,t\}})-|\wz L_t|\ge (1-3\ez)|x-y|.$$
Noticing that $\ell_\rn(\gz_{\{x,\,y,\,t\}})\ge|x-y|$, we   have
$$ d_t(x,\,y) \ge \frac{\sqrt{1-\ez}}{\sqrt{1-\dz}}(1-3\ez)|x-y| \ge \frac{(1-4\ez)}{\sqrt{1-\dz}}|x-y| .$$
By the arbitrariness  of $\ez$, we conclude \eqref{e3.x5}.
\end{proof}

\begin{rem}\label{r3.x1}\rm
(i) Notice that Lemma \ref{l3.2} is much stronger than Lemma \ref{comp-dist}.
To see this, let $\lz$ be a positive continuous on $\rn$ such that
\eqref{e1.1} holds
when $A=(1-\dz1_{S_{\bf a},\,\dz})I_n$, that is,
$$\frac1{\lz(x)}|\xi|^2\le\langle A(x)\xi,\,\xi\rangle=(1-\dz1_{S_{\bf a},\,\dz})|\xi|^2\le\lz(x)|\xi|^2$$
for all $x\in\rn$ and $\xi\in\rn\setminus\{0\}$.
From  this, when $x\in S_{\bf a}$, it follows that
$\frac1{\lz(x)}\le1-\dz\le \lz(x),$
which yields  $\lz(x)\ge\frac1{1-\dz}\ge1. $
Without loss of generality, we may let that $\lz(x)=\frac1{1-\dz}$
for $x\in S_{\bf a}$.
Now fix $x\in S_{\bf a}$. Then by Lemma \ref{comp-dist},
we always have that
$$\liminf_{x\ne y\to x}\frac{d_{S_{\bf a},\,\dz}(y,x)}{|y-x|}\ge \frac{1}{\sqrt{\lambda(x)}}=\sqrt{1-\dz}, $$
which is equivalent to that for any $\ez>0$, we can find $r>0$ such that for all  $y \in  B(x,\,r)$,
\begin{equation}\label{e3.xx2}
d_{S_{\bf a},\,\dz}(x,\,y ) \ge (1-\ez)
\sqrt{1-\dz}|x-y |;
\end{equation}
and that
 $$\limsup_{x\ne y\to x}\frac{d_{S_{\bf a},\,\dz}(y,x)}{|y-x|}\le  {\sqrt{\lambda(x)}}=\frac1{\sqrt{1-\dz}},$$
which is equivalent to that for any $\ez>0$, we can find $r>0$ such that for all  $y\in  B(x,\,r)$,
\begin{equation}\label{e3.xx3}d_{S_{\bf a},\,\dz}(x,\,y ) \le (1+\ez)
\frac1{\sqrt{1-\dz}}|x-y |. \end{equation}
Obviously, we cannot obtain  \eqref{e3.x5} from \eqref{e3.xx2} and \eqref{e3.xx3},
and hence cannot obtain \eqref{e3.x5}  from Lemma \ref{comp-dist}. Indeed, \eqref{e3.x5} is much stronger than \eqref{e3.xx2}.

(ii) 
The reason why our Cantor set and Sierpinski carpet give entirely
different outcomes is the different behavior of $d_{S_{\bf a},\,\dz}$ and
$d_{C^n_{\bf a},\,\dz}$ when $\dz\in(0,\,1)$.
 Indeed, as the proof of Lemma~\ref{l3.2} shows, given almost every point
$x\in S_{\bf a}$
 and \emph{every} close-by point $y$,
 any curve that connects $x$ to $y$ with length comparable to $|x-y|$ lives in
 $S_{\bf a}$ for a significant fraction of the time, and sees $x$ as a linear
density point of
 $S_{\bf a}$. In comparison, almost every point $x$ in the Cantor set
$C^n_{\bf a}$ can be connected
 to \emph{some} near-by point by a curve of length comparable to the
Euclidean distance between
 the two points while avoiding $C^n_{\bf a}$ for almost all of the time.
\end{rem}

\begin{rem}\label{r3.1}\rm
Given an $A\in\mathscr A(\boz)$ with the intrinsic distance $d_A$, by Lemma \ref{l2.1}, for all $u\in\lip(\boz)$,
we always have $$\langle A \nabla u,\,\nabla u\rangle\le(\lip_{d_A}u)^2\le |Du|_{d_A}^2$$
almost everywhere.
If $A\in\mathscr A_{\rm wusc}(\boz)$, then the first $``\le"$ is actually $``="$,
if $A\in\mathscr A_{\rm wusc}(\boz)$ and $u\in C^1_\loc(\boz)$, the second $``\le"$ is actually $``="$.
However if  $A\notin\mathscr A_{\rm wusc}(\boz)$,
then the first $``\le"$ may be $``<"$ on some set with positive measure as shown by Proposition \ref{p3.1};
the second $``\le"$ may be $``<"$ on some set with positive measure as shown by Theorem \ref{t3.1} even for  $u\in C^1_\loc(\boz)$.
\end{rem}

\section{$L^\fz$-Variational problem for arbitrary $A\in\mathscr A(\boz)$}

In this section, we assume that $n\ge2$.
Let $A\in\mathscr A(\boz)$ and $U\Subset\boz$ be a bounded open subset.
We obtain the following existence and uniqueness of the absolute minimizer
given a boundary data
(see Section~1 for the definition of absolute minimizers).

\begin{thm}\label{t4.1}
{\rm (}i{\rm )} For every $f\in \lip(\partial U)$, there exists a unique absolutely minimizing Lipschitz extension.

{\rm (}ii{\rm )} The absolute minimizer is completely determined by the intrinsic distance in the following sense:
Let $A,\,\wz A\in\mathscr A(\boz)$ and denote by $d_A,\, d_{\wz A}$ {\rm (}resp. $H,\,\wz H${\rm )}
the corresponding intrinsic distance {\rm (}resp. Hamiltonian{\rm )}.
If \begin{equation}\label{e4.xx1}\lim_{x\ne y\to x}\frac{d_{  A}(x,\,y)}{d_{\wz A}(x,\,y)}=1 \end{equation} for almost all $x\in U$,
then $u$ is an absolute minimizer on $U$ for the Hamiltonian $H $ if and only if $u$ is
 an absolute minimizer on $U$ for the Hamiltonian $\wz H $.
\end{thm}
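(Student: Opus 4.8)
The plan is to reduce everything to the intrinsic metric $d_A$. The bridge is Lemma \ref{l4.6} (to be proved), which together with Lemma \ref{l2.1} should yield the identity
\[
  \esssup_{x\in V}H(x,\,\nabla u(x))=\Big(\esssup_{x\in V}\lip_{d_A}u(x)\Big)^2
\]
for every $u\in\lip(U)$ and every open $V\Subset U$. Granting this, ``$u$ is an absolute minimizer for $H$ on $U$'' becomes equivalent to ``for every open $V\Subset U$, $u$ minimizes $v\mapsto\esssup_V\lip_{d_A}v$ among $v$ with $v|_{\partial V}=u|_{\partial V}$'', i.e.\ to $u$ being an absolutely minimizing Lipschitz extension in the metric space $(\boz,\,d_A)$. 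Both parts of the theorem then become statements about this metric formulation.

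For (ii): fix any $w\in\lip(U)$. For almost every $x\in U$,
\[
  \lip_{d_A}w(x)=\limsup_{y\to x}\frac{|w(y)-w(x)|}{d_{\wz A}(x,\,y)}\cdot\frac{d_{\wz A}(x,\,y)}{d_{A}(x,\,y)}=\lip_{d_{\wz A}}w(x),
\]
since by \eqref{e4.xx1} the last factor tends to $1$ (a positive factor tending to $1$ leaves the $\limsup$ unchanged). Hence $\esssup_V\lip_{d_A}w=\esssup_V\lip_{d_{\wz A}}w$ for every open $V\Subset U$, so the two variational problems -- for $H$ and for $\wz H$ -- literally coincide, and $u$ is an absolute minimizer for $H$ on $U$ if and only if it is one for $\wz H$. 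Note we do \emph{not} assert $d_A=d_{\wz A}$: hypothesis \eqref{e4.xx1} controls only the infinitesimal behaviour of the two distances at almost every point, which is precisely what the pointwise Lipschitz constant records.

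For (i): existence of an absolute minimizer for $H$ with boundary data $f\in\lip(\partial U)$ is furnished by the general theory \cite{jn}. Uniqueness reduces to the comparison principle: if $u,v$ are absolute minimizers on $U$ with $u\le v$ on $\partial U$, then $u\le v$ in $U$. Following \cite{as}, and using that $d_A$ is a geodesic metric locally comparable to the Euclidean one (by ellipticity; cf.\ Lemma \ref{comp-dist}), I would first establish two-sided comparison with $d_A$-cones $x\mapsto a\pm b\,d_A(x,\,z)$ ($b\ge0$, $z\notin V$): for instance, if $u\le a+b\,d_A(\cdot,\,z)$ on $\partial V$ then the same holds in $V$, because on the open set $V'$ where $u$ would exceed the cone $u$ and the cone share boundary values, the cone is $b$-Lipschitz for $d_A$ (reverse triangle inequality), so the absolute-minimizer property -- via the reformulation above -- makes $u$ itself $b$-Lipschitz for $d_A$ on $V'$, and following a $d_A$-geodesic from a point of $V'$ out through $\partial V'$ toward $z$ then pushes $u$ back under the cone, contradicting $V'\ne\emptyset$. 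Given two-sided cone comparison, the Armstrong--Smart argument carries over \emph{verbatim} with $d_A$ replacing the Euclidean distance; applying the resulting comparison principle to the pairs $(u,v)$ and $(v,u)$ gives $u=v$.

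The main obstacle will be Lemma \ref{l4.6}, specifically the inequality $\esssup_V\lip_{d_A}u\le\esssup_V\sqrt{H(\cdot,\,\nabla u)}$ (its reverse being Lemma \ref{l2.1}). For almost every $x\in V$ one must bound $|u(y)-u(x)|$, for $y$ near $x$, by the integral of $\sqrt{H(\cdot,\,\nabla u)}$ along a curve joining $x$ to $y$ of $d_A$-length close to $d_A(x,y)$ and staying in a small ball about $x$; the delicate point is a Fubini/transversality argument that forces this near-geodesic off the null set where $H(\cdot,\,\nabla u)$ is uncontrolled without lengthening it in $d_A$, since for a merely measurable $A$ no explicit geodesics are at hand. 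Secondary technical points are checking the hypotheses of \cite{jn} for the possibly very rough (yet always geodesic and locally Euclidean-comparable) space $(\boz,\,d_A)$, and confirming that comparison with $d_A$-cones alone drives the Armstrong--Smart uniqueness argument, so that no regularity of $A$ beyond ellipticity is needed.
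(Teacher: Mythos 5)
Your overall architecture matches the paper's: reduce the $L^\fz$-variational problem to a purely metric one via the identity $\esssup_V H(\cdot,\nabla u)=\sup_V(\lip_{d_A}u)^2$ (Lemma~\ref{l4.6}), deduce (ii) from the almost-everywhere equality $\lip_{d_A}u=\lip_{d_{\wz A}}u$ forced by \eqref{e4.xx1}, and get (i) by combining existence from \cite{jn} with uniqueness via comparison with $d_A$-cones in the style of \cite{as}. All of this is exactly what the paper does (Lemmas \ref{l4.5}, \ref{l4.50}, \ref{l4.51}, \ref{l4.8}, \ref{l4.9}).

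The genuine gap is in Lemma~\ref{l4.6}, which you correctly flag as the crux and then leave unproved, with a sketch that is unlikely to close. Your plan is to bound $|u(y)-u(x)|$ by integrating $\sqrt{H(\cdot,\nabla u)}$ along a near-$d_A$-geodesic and using a Fubini/transversality argument to dodge the null set where $H(\cdot,\nabla u)$ exceeds its essential supremum. The difficulty is not merely ``delicate'': for a merely measurable $A$, near-$d_A$-geodesics are not accessible objects, and the quantity $\sqrt{H(\gamma(t),\nabla u(\gamma(t)))}$ is not even well defined along a generic curve $\gamma$, since $A$ is only defined up to a set of $n$-dimensional measure zero and a curve itself has measure zero. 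A Fubini argument over Euclidean line segments (as in Lemma~\ref{comp-dist}) does not transfer because the curve you need is a near-geodesic in $d_A$, which is in general very far from a line segment, and you cannot translate it in a family to make Fubini bite. The paper's proof of Lemma~\ref{l4.6} avoids curves entirely: fix $x$ and a small $d_A$-ball $B_{d_A}(x,r)$, take the McShane extension $u_{x,r}$ of $u|_{B_{d_A}(x,r)}$, bound $H(\cdot,\nabla u_{x,r})$ outside the ball by $\lambda^2 F(u,V)$, then replace $A$ by the modified matrix $\wz A=A$ on $B_{d_A}(x,r)$ and $\wz A=\tfrac1{2\lambda}A$ outside, so that $u_{x,r}/\sqrt{F(u,V)}$ becomes an admissible competitor in the variational definition of $d_{\wz A}$ on all of $\rn$. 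It remains to check that $d_{\wz A}$ and $d_A$ agree near $x$, and this is done in Step~3 by testing with the truncated distance function $v_{z,r_z}=\min\{d_{\wz A}(z,\cdot),r_z\}$. The whole ``integration along curves'' is thus carried out implicitly through the dual/variational definition of $d_A$ (a supremum over $1$-Lipschitz competitors) rather than through any parametrization. You would need to adopt this device, or an equivalent one, for your argument to go through; the route you sketch is the one that the paper's construction is specifically designed to sidestep.

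One more small point: in (ii) you write the $\limsup$ of a product as a product of a $\limsup$ and a limit; you correctly note this needs the second factor to converge to a positive limit, which \eqref{e4.xx1} gives. That step is fine as stated and is exactly the observation the paper makes after Lemma~\ref{l4.5}.
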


A special case of \eqref{e4.xx1} is that for almost every $x\in U$,
there exists $ r_x>0$ such that
$d_{  A}(x,\,y)=d_{\wz A}(x,\,y)$ for all $y\in B_d(x,\,r_x)$.

 We do not know whether if weak upper semicontinuity of $A$ could guarantee
$C^1$-regularity for the associated minimizers. However,
we have the following negative result for the $C^1$-regularity
of absolute minimizers.

\begin{prop}\label{c4.3}
Let $A= 1-\dz 1_{S_{\bf a}}$ be as in Subsection 3.2, where
${\bf a} \in \ell^n$ with
$a_j\in\{\frac13,\,\frac15,\,\frac17,\cdots\}$ and $\dz\in(0,\,1)$.
 Then there is an absolute minimizer on $U=(0,\,1)^n$ associated
to a related $L^\fz$-variational problem
that is not $C^1$-regular on $U$.
\end{prop}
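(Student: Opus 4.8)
The plan is to take as boundary datum the restriction to $\partial U$ of an intrinsic ``cone'' centred outside $\overline U$, show that the corresponding absolute minimizer must satisfy the eikonal equation $H(\cdot,\nabla u)=1$ almost everywhere on $U$, and then observe that this is incompatible with $u\in C^1(U)$ since $S_{\bf a}$ is closed with empty interior but has positive measure.

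Fix $z_0\in\rn\setminus\overline U$ and set $A=A_{S_{\bf a},\,\dz}$, so $H(x,\xi)=(1-\dz 1_{S_{\bf a}}(x))|\xi|^2$. Since $(1-\dz)I_n\le A(x)\le I_n$ for every $x$, any $v$ admissible in the definition of $d_A$ satisfies $|\nabla v|^2\le(1-\dz)^{-1}$ a.e., whence $|x-y|\le d_A(x,y)\le(1-\dz)^{-1/2}|x-y|$ for all $x,y\in\rn$. Thus $u:=d_A(\cdot,z_0)$ is globally (Euclidean-)Lipschitz, so $u\in\lip(U)$, and $(\rn,d_A)$ is a complete, proper, geodesic metric space; we take the boundary datum $f:=u|_{\partial U}\in\lip(\partial U)$. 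By Theorem \ref{t3.1}, $(\lip_{d_A}v)^2=H(\cdot,\nabla v)$ a.e.\ for every locally Lipschitz $v$, so on each $V\Subset U$ the $L^\fz$-functional $v\mapsto\esssup_V H(x,\nabla v)$ coincides with the metric functional $v\mapsto(\esssup_V\lip_{d_A}v)^2$; hence, by the description of absolute minimizers through the pointwise Lipschitz constant (Lemma \ref{l4.6}), being an absolute minimizer for $H$ on $U$ is the same as being an absolute minimizer in the length space $(\rn,d_A)$. It is a standard property of absolute minimizers (comparison with cones) that $z\mapsto d_A(z,z_0)$ is an absolute minimizer on every open set not containing $z_0$; since $z_0\notin\overline U$, $u$ is therefore an absolute minimizer for $H$ on $U$ with $u|_{\partial U}=f$, and by the uniqueness in Theorem \ref{t4.1} it is \emph{the} absolutely minimizing Lipschitz extension of $f$.

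Next we compute $\lip_{d_A}u$. As $(\rn,d_A)$ is a proper length space, for every $x\in U$ there is a $d_A$-geodesic segment from $z_0$ to $x$; choosing $y$ on this segment near $x$ gives $|u(y)-u(x)|=d_A(x,y)$, so $\lip_{d_A}u(x)=1$ (the reverse inequality is the triangle inequality). Theorem \ref{t3.1} now yields $H(x,\nabla u(x))=(\lip_{d_A}u(x))^2=1$ for a.e.\ $x\in U$, that is
\[
  (1-\dz 1_{S_{\bf a}}(x))\,|\nabla u(x)|^2=1\qquad\text{for a.e.\ }x\in U,
\]
so $|\nabla u|^2=1$ a.e.\ on $U\setminus S_{\bf a}$ and $|\nabla u|^2=(1-\dz)^{-1}$ a.e.\ on $U\cap S_{\bf a}$. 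Suppose $u\in C^1(U)$. Then $|\nabla u|^2$ is continuous on $U$; since it equals $1$ a.e.\ on the open dense set $U\setminus S_{\bf a}$ (recall $S_{\bf a}$ is closed with empty interior), continuity forces $|\nabla u|^2\equiv1$ on $U$. But $|U\cap S_{\bf a}|=|S_{\bf a}|=\prod_j(1-a_j^n)>0$ because ${\bf a}\in\ell^n$, and on this positive-measure set $|\nabla u|^2=(1-\dz)^{-1}>1$ a.e., a contradiction. Hence $u$ is an absolute minimizer on $U=(0,1)^n$ for the $L^\fz$-variational problem with Hamiltonian $H$ and boundary datum $f$ which fails to be of class $C^1$ on $U$.

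The main obstacle is the identification of the absolute minimizer with the intrinsic cone: one must use Theorem \ref{t3.1} to replace $\esssup_V H(\cdot,\nabla v)$ by the metric $L^\fz$-functional built from $\lip_{d_A}$ --- it is essential here that $\lip_{d_A}$, rather than the a priori larger local variant $|Du|_{d_A}$ (which may be strictly larger for this $A$, cf.\ Remark \ref{r3.1}), governs the problem --- and combine this with the comparison-with-cones property of absolute minimizers in $(\rn,d_A)$. Once this is established, the eikonal identity together with the empty-interior/positive-measure dichotomy for $S_{\bf a}$ finishes the argument quickly.
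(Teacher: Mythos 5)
Your proposal takes a genuinely different route from the paper's. The paper works with the linear boundary datum $f(x)=x_1$ and runs a two-step contradiction: Step~1 shows (using upper semicontinuity of $\lip_{d_A}u$, Theorem \ref{t3.1}, and the density of $U\setminus S_{\bf a}$) that any $C^1$ absolute minimizer would have to satisfy $\nabla u=0$ on $S_{\bf a}$; Step~2 shows, by integrating $\nabla u$ along the segment from $(0,x')$ to $(1,x')$, that the absolute minimizer with datum $x_1$ cannot have small gradient near all of $S_{\bf a}$. You instead take an explicit intrinsic cone $u=d_A(\cdot,z_0)$, $z_0\notin\overline U$, note $\lip_{d_A}u\equiv 1$ via geodesics, invoke Theorem \ref{t3.1} to get the eikonal identity $H(\cdot,\nabla u)=1$ a.e., and get a contradiction with $u\in C^1$ directly from the closed/empty-interior/positive-measure structure of $S_{\bf a}$. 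When it works, your argument is shorter and more transparent.

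There is, however, a gap at the crucial claim that $z\mapsto d_A(z,z_0)$ is an absolute minimizer on $U$. You assert this as ``a standard property of absolute minimizers (comparison with cones),'' but comparison with cones is a \emph{consequence} of being an absolute minimizer (Lemma \ref{l4.51}), not a reason that a given function is one; by Lemma \ref{l4.53} you would need to \emph{verify} that $d_A(\cdot,z_0)$ satisfies cone comparison (equivalently, $\lip_{d_A}(u,V)=\lip_{d_A}(u,\partial V)$ for every $V\Subset U$). In the Euclidean metric this is classical, but in the intrinsic metric $(\rn,d_A)$ it is not automatic. Already the case $a=0$ of the cone comparison amounts to the maximum principle $\max_{\overline V}d_A(\cdot,z_0)=\max_{\partial V}d_A(\cdot,z_0)$, which in a general complete, proper geodesic space can fail (it relies on geodesics from $z_0$ being extendable, or equivalently on the metric spheres $\{d_A(\cdot,z_0)=r\}$ having no interior ``pockets''); the bi-Lipschitz comparability of $d_A$ with the Euclidean metric by itself does not rule this out. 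This step therefore needs a proof or a reference. One way to close the gap in the present setting: use the smooth approximations $A_t$ of Lemma \ref{l3.3}, for which $(\rn,d_{A_t})$ is a complete smooth Riemannian manifold and hence geodesically complete, verify the maximum principle there, and pass to the limit $t\to 0$ using the uniform Lipschitz bounds on $d_{A_t}$ and Norris's convergence $d_{A_t}\to d_A$. Once the absolute-minimality of $u$ is secured, the rest of your argument (eikonal equation on $U$, continuity of $|\nabla u|^2$ on the dense open set $U\setminus S_{\bf a}$, positive measure of $U\cap S_{\bf a}$) is correct and complete. Note also that the paper's choice of the linear datum $f(x)=x_1$ is precisely what lets it avoid the question of whether any specific explicit function is an absolute minimizer: existence is taken from Lemma \ref{l4.8}, and only properties forced by absolute-minimality are used.
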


Now we prove Theorem~\ref{t4.1} and Proposition~\ref{c4.3}.
Observe that the relative compactness of $U$ implies that the function $\lz$
appearing \eqref{e1.1} is bounded from above on $U$.
Without loss of generality, we may assume that $\boz=\rn$ and that
the diffusion matrix $A$ satisfies
\begin{equation}\label{e4.1}
\frac1{\lz}|\xi|^2\le \langle A(x)\xi,\,\xi\rangle\le \lz|\xi|^2
\end{equation}
for almost all $x\in\boz$ and $\xi\in\rr^n$, where $\lz\ge1$ is a fixed
constant.
Observe that $(\rn,\,d_A)$ is a length space (see for example \cite{s10}),
and hence a geodesic space due to its local compactness.
Since we have no regularity of continuity assumption on the diffusion matrix,
the approach of
using the Aronsson equations is not applicable.
Instead of this we characterize absolute minimizers via intrinsic distance;
see for example \cite{gpp,gwy,cp,dmv}.
The following Lemma \ref{l4.5} connects the absolute minimizer with a
description via pointwise Lipschitz constants;
its proof relies on (the key) Lemma~\ref{l4.6}.

\begin{lem}\label{l4.5}
Let $u\in \lip(U)$.
Then $u$ is an absolute minimizer on $U$   if
 and only if for each bounded open subset $V\Subset U$  and all
$v\in \lip(V)\cap C(\overline V)$ with
  $u|_{\partial V}= v|_{\partial V}$, one (both) of the following holds:

\noindent {\rm(}i{\rm )} $\esssup_{x\in V} \lip_{d_A}u(x) \le {\esssup}_{x\in V} \lip_{d_A}v(x);$

\noindent {\rm(}ii{\rm )} $\sup_{x\in V} \lip_{d_A}u(x) \le {\sup}_{x\in V} \lip_{d_A}v(x).$
\end{lem}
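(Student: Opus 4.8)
The plan is to deduce the lemma from the identity
\[
\esssup_{x\in V}H(x,\,\nabla w(x))=\Big(\esssup_{x\in V}\lip_{d_A}w(x)\Big)^2=\Big(\sup_{x\in V}\lip_{d_A}w(x)\Big)^2,
\]
valid for every open set $V\Subset U$ and every $w\in\lip(V)$; this identity is essentially the content of the key Lemma \ref{l4.6}, combined with Lemma \ref{l2.1}. Granting it, the deduction is short. By definition, $u$ is an absolute minimizer on $U$ exactly when, for every bounded open $V\Subset U$ and every competitor $v\in\lip(V)\cap C(\overline V)$ with $u|_{\partial V}=v|_{\partial V}$, one has $\esssup_{x\in V}H(x,\,\nabla u(x))\le\esssup_{x\in V}H(x,\,\nabla v(x))$. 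Applying the displayed identity to $w=u$ and to $w=v$ on such a $V$ and taking square roots, this last inequality is equivalent to (i); and since the identity also yields $\esssup_{x\in V}\lip_{d_A}w(x)=\sup_{x\in V}\lip_{d_A}w(x)$ for every $w\in\lip(V)$, conditions (i) and (ii) are in fact the same statement. Hence each of them, required for all admissible $V$ and $v$, is equivalent to $u$ being an absolute minimizer.

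It remains to explain the identity, which is where the real work --- and Lemma \ref{l4.6} --- lies. One inequality is immediate from Lemma \ref{l2.1}, applied on the open set $V$: $\sqrt{H(x,\,\nabla w(x))}\le\lip_{d_A}w(x)$ for a.e.\ $x\in V$, hence
\[
\Big(\esssup_{x\in V}H(x,\,\nabla w(x))\Big)^{1/2}\le\esssup_{x\in V}\lip_{d_A}w(x)\le\sup_{x\in V}\lip_{d_A}w(x).
\]
For the reverse inequality $\sup_{x\in V}\lip_{d_A}w(x)\le\big(\esssup_{x\in V}H(x,\,\nabla w(x))\big)^{1/2}$ --- the crux --- I would fix $x\in V$ and a Euclidean ball $B(x,\,\rho)\subset V$, and set $M_\rho=\big(\esssup_{z\in B(x,\,\rho)}H(z,\,\nabla w(z))\big)^{1/2}$. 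Then $H(z,\,\nabla(w/M_\rho)(z))\le1$ for a.e.\ $z\in B(x,\,\rho)$, so $w/M_\rho$ is an admissible test function for the intrinsic distance of the ball $B(x,\,\rho)$; since this localized intrinsic distance agrees with $d_A$, up to a factor $1+o(1)$, between points near $x$ --- because $(\rn,\,d_A)$ is a locally compact length space (see \cite{s10}), hence geodesic, so that near-geodesics between nearby interior points stay inside $B(x,\,\rho)$ --- this gives $|w(y)-w(x)|\le(1+o(1))\,M_\rho\,d_A(x,\,y)$ as $y\to x$. Dividing by $d_A(x,\,y)$ and letting $y\to x$ yields $\lip_{d_A}w(x)\le M_\rho\le\big(\esssup_{z\in V}H(z,\,\nabla w(z))\big)^{1/2}$, and taking the supremum over $x\in V$ closes the chain.

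The step I expect to be the main obstacle is exactly the one glossed above: comparing the intrinsic distance of a small ball with the global distance $d_A$ near its center --- equivalently, passing from the pointwise values of $H(\cdot,\,\nabla w)$, which are relevant only on a Lebesgue-null union of curves, to the essential supremum of $H(\cdot,\,\nabla w)$ over an open set. Making this rigorous requires a Fubini / modulus-of-curves argument, and this is precisely the technical content packaged in Lemma \ref{l4.6}; an alternative is to run the argument through a mollification of $A$ in the spirit of Norris \cite{n97} (cf.\ Lemma \ref{l3.3}) and pass to the limit. I note, finally, that no weak upper semicontinuity of $A$ is used here: even on the examples of Proposition \ref{p3.1}, where $\lip_{d_A}w$ and $\sqrt{H(\cdot,\,\nabla w)}$ disagree on a set of positive measure, the reasoning above still forces the three supremum-type quantities in the identity to coincide, which is all that the lemma requires.
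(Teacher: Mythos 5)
Your deduction of Lemma \ref{l4.5} from Lemma \ref{l4.6} is correct and is exactly the route the paper takes: the paper never writes out a separate proof of Lemma \ref{l4.5} but states explicitly that ``its proof relies on (the key) Lemma \ref{l4.6},'' and the intended argument is precisely the one you give --- apply the three-way identity of Lemma \ref{l4.6} to $u$ and to (a McShane extension of) $v$ on each $V$, note that it forces $\esssup_V\lip_{d_A}w=\sup_V\lip_{d_A}w$ so that (i) and (ii) coincide, and then observe that the definition of absolute minimizer transforms verbatim into either of them. Your closing observation that no weak upper semicontinuity of $A$ is needed is also correct and consistent with the paper's use of this lemma for arbitrary $A\in\mathscr A(\boz)$. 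The only caveat, which you flag yourself, concerns your optional sketch of Lemma \ref{l4.6}: the paper does not compare $d_A$ with the intrinsic distance of a Euclidean ball $B(x,\rho)$ via a geodesic/length-space heuristic; instead it McShane-extends $u$ off $B_{d_A}(x,r)$, replaces $A$ by a modified matrix $\wz A$ that equals $A$ inside and $\tfrac1{2\lambda}A$ outside, proves $d_{\wz A}=d_A$ near each interior point, and then tests $u_{x,r}/\sqrt{F(u,V)}$ in the definition of $d_{\wz A}$. Your geodesic heuristic, as written, is not rigorous (the intrinsic distance is defined via test functions, not curves, and the localized intrinsic distance of a ball need not agree with $d_A$ on it), but since you are invoking Lemma \ref{l4.6} as given rather than reproving it, this does not affect the validity of your proof of Lemma \ref{l4.5} itself.
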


\begin{lem}\label{l4.6}
For every bounded open set $V\subset \rn$ and every $u\in\lip_\loc(\rn)$, we have
\begin{equation}\label{e4.2}
 \esssup_{x\in V}\sqrt{H(x,\,\nabla u(x))}=\esssup_{x\in V}  \lip_{d_A}u(x) = \sup_{x\in V} \lip_{d_A}u(x).
\end{equation}
  \end{lem}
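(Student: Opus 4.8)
The plan is to establish the chain of equalities in \eqref{e4.2} by proving the three expressions are pairwise comparable, with the key input being Lemma \ref{l2.1} together with Lemma \ref{comp-dist}. The inequality $\esssup_{x\in V}\lip_{d_A}u(x)\le\sup_{x\in V}\lip_{d_A}u(x)$ is trivial, so the content is in the other direction. First I would handle the easy identification $\esssup_{x\in V}\sqrt{H(x,\nabla u(x))}\le\esssup_{x\in V}\lip_{d_A}u(x)$: this is immediate from Lemma \ref{l2.1}, which gives $H(x,\nabla u(x))\le(\lip_{d_A}u(x))^2$ for almost every $x\in V$. So it remains to prove the reverse loop, namely $\sup_{x\in V}\lip_{d_A}u(x)\le\esssup_{x\in V}\sqrt{H(x,\nabla u(x))}$.

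For this, set $M=\esssup_{x\in V}\sqrt{H(x,\nabla u(x))}$; we may assume $M<\infty$ since $u\in\lip_\loc$ and $V$ is bounded (using the ellipticity bound \eqref{e4.1}). The idea is to show $u$ is $M$-Lipschitz with respect to $d_A$ on $V$, i.e. $|u(x)-u(y)|\le M\,d_A(x,y)$ for all $x,y\in V$ — from which $\sup_{x\in V}\lip_{d_A}u(x)\le M$ follows directly. To get this, I would use the intrinsic-distance characterization together with an approximation argument. Fix $x,y\in V$. For $\varepsilon>0$ there is $w\in C(\rn)\cap W^{1,2}_\loc(\rn)$ with $H(z,\nabla w(z))\le1$ a.e. and $d_A(x,y)\le w(x)-w(y)+\varepsilon$ (by definition of $d_A$, noting $V$ is relatively compact so we can localize). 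The function $v=u/M$ then satisfies $H(z,\nabla v(z))\le1$ a.e. on $V$. Since both $v$ and $w$ have Hamiltonian density bounded by $1$ a.e., I would connect $x$ to $y$ by a Euclidean segment (after a small Fubini-type perturbation of endpoints, as in the proof of Lemma \ref{comp-dist}, so that the segment misses the null set where either bound fails), and integrate: $|v(x)-v(y)|\le\int_{[x_\eta,y_\eta]}|\nabla v|\,ds$. The difficulty is that $|\nabla v|$ is controlled only through $H$, not directly; but $H(z,\nabla v(z))\le1$ means $|\nabla v(z)|\le\sqrt{\lambda}$ a.e., which is not sharp enough.

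The cleaner route, which I would actually adopt, is to avoid segments entirely and argue via the distance directly: since $v\in W^{1,2}_\loc$ with $H(z,\nabla v(z))\le1$ a.e. on $V$, a mollification/truncation argument (or a direct application of \cite[Theorem 2.1]{kz} as invoked for Lemma \ref{l2.1}, applied now to the conclusion rather than the hypothesis) shows $\lip_{d_A}v(x)\le1$ for \emph{every} $x\in V$, not just almost every — i.e. $|v(x)-v(y)|\le d_A(x,y)$. Indeed the function $\widetilde v$ defined as the $d_A$-Lipschitz envelope $z\mapsto\inf_{w}[v(w)+d_A(z,w)]$ over $w$ in a neighborhood agrees with $v$ and is $1$-Lipschitz for $d_A$; one checks $v=\widetilde v$ using that $v$ is continuous and its $d_A$-pointwise Lipschitz constant is a.e.\ at most $1$, together with the fact (from \cite{kz}, using local doubling of $(\rn,d_A,dx)$) that for continuous Sobolev functions the a.e.\ upper gradient bound upgrades to the everywhere Lipschitz bound. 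Scaling back gives $|u(x)-u(y)|\le M\,d_A(x,y)$, hence $\sup_{x\in V}\lip_{d_A}u(x)\le M$, closing the loop. The main obstacle is precisely this upgrade from ``almost everywhere'' to ``everywhere'' for the $d_A$-Lipschitz bound on continuous functions; this is where the local doubling property of $(\boz,d_A,dx)$ noted before Lemma \ref{l2.1} (and the cited results of \cite{kz}) does the essential work, and I would cite it rather than reprove it.
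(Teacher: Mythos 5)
Your proposal has a genuine gap at the crucial reverse inequality $\sup_{x\in V}\lip_{d_A}u(x)\le\esssup_{x\in V}\sqrt{H(x,\nabla u(x))}$, and the argument you sketch to close it is circular. The function $v=u/M$ satisfies $H(z,\nabla v(z))\le 1$ only for almost every $z\in V$, whereas the competitors in the definition of $d_A$ must satisfy this bound almost everywhere on all of $\rn$ (after the normalization at the start of Section~4). So $v$ is not in general a valid test function and one cannot deduce $|v(x)-v(y)|\le d_A(x,y)$ directly. Your envelope argument does not repair this: you want to verify $v=\widetilde v$ ``using that \ldots\ its $d_A$-pointwise Lipschitz constant is a.e.\ at most $1$'', but the statement $\lip_{d_A}v\le1$ a.e.\ on $V$ is precisely (after rescaling) the first equality in~\eqref{e4.2} --- the very thing being proved. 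It does not follow from $H(\cdot,\nabla v)\le1$ a.e.: Lemma~\ref{l2.1} gives the \emph{opposite} inequality $H\le(\lip_{d_A})^2$, and Proposition~\ref{p3.1} shows that the pointwise converse $\lip_{d_A}u(x)\le\sqrt{H(x,\nabla u(x))}$ can fail on a set of positive measure. Citing \cite[Theorem~2.1]{kz} ``applied to the conclusion rather than the hypothesis'' does not supply a different mechanism.

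The paper closes this gap by an extension-plus-matrix-modification trick that your proposal has no analogue of. Fix $x\in V$ and a small $d_A$-ball $B=B_{d_A}(x,r)$; extend $u$ from $B$ to $\rn$ by a $d_A$-McShane extension $u_{x,r}$. On $B$ one has $H(\cdot,\nabla u_{x,r})\le F(u,V)$ a.e.; off $B$ the extension is controlled only up to a factor depending on the ellipticity constant $\lambda$. The key move is to replace $A$ by a matrix $\widetilde A$ that agrees with $A$ on $B$ but is scaled down off $B$, so that $\widetilde H(\cdot,\nabla u_{x,r})\le F(u,V)$ a.e.\ on all of $\rn$; thus $u_{x,r}/\sqrt{F(u,V)}$ \emph{is} a legitimate competitor for $d_{\widetilde A}$, giving $|u_{x,r}(z)-u_{x,r}(y)|\le\sqrt{F(u,V)}\,d_{\widetilde A}(z,y)$. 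One then checks, by a $d_A$-metric version of the argument in Lemma~\ref{comp-dist}, that $d_{\widetilde A}=d_A$ near $x$, which converts the bound back to $d_A$ and yields $|u(x)-u(y)|\le\sqrt{F(u,V)}\,d_A(x,y)$ on a small ball around $x$, hence $\lip_{d_A}u(x)\le\sqrt{F(u,V)}$ for \emph{every} $x\in V$. The McShane extension and the matrix modification are exactly the devices needed to turn ``$H$ small a.e.\ on $V$'' into a usable global competitor, and this is the step missing from your proposal.
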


\begin{proof}
By Lemma \ref{l2.1}, we always have
$$ \sqrt {F(u,\,V)}:=\esssup_{x\in V}\sqrt{H(x,\,\nabla u(x))}\le \esssup_{x\in V}  \lip_{d_A}u(x) \le\sup_{x\in V}  \lip_{d_A}u(x).$$
It then suffices to show that $ \sup_{x\in V}  \lip_{d_A}u(x)\le \sqrt {F(u,\,V)}$, which
is further reduced to showing that for every $x\in V$, there exists $r_x<d_A(x,\,\partial V)$ such that
for all $y\in B_{d_A}(x,\,r_x)$,
\begin{equation}\label{e4.xx2}
|u(x)-u(y)|\le \sqrt {F(u,\,V)}d_A(x,\,y).
\end{equation}
We divide the proof of \eqref{e4.xx2} into 4 steps.

{\it Step 1.}   Fix $x\in V$ and $0<r <\frac{1}{4}d_A(x,\,\partial V)$.   Extend $u$ from $B_{d_A}(x,\,r)$
to $\rn$ via a McShane extension as follows:
$$u_{x,\,r}(y)=\inf_{z\in B_{d_A}(x,\,r)}\{u(z)+\lip_{d_A}(u,\,B_{d_A}(x,\,r))d_A(z,\,y)\}.$$
Then $u_{x,\,r}=u$ on $B_{d_A}(x,\,r)$, and
$$H(z,\,\nabla u_{x,\,r}(z))=H(z,\,\nabla u (z))\le F(u,\,B_{d_A}(x,\,r)) \le F(u,V)$$
for almost all $z\in B_{d_A}(x,\,r)$,  and by Lemma \ref{l2.1},
$$\sqrt{H(z,\,\nabla u_{x,\,r}(z))}\le
 \lip_{d_A}u_{x,\,r}(z)\le \lip_{d_A}(u,\,B_{d_A}(x,\,r))$$
 for almost all $z\notin B_{d_A}(x,\,r)$.

 {\it Step 2.}  By the proof of Lemma~\ref{comp-dist} and the ellipticity
condition~\eqref{e4.1}, we have
 $d_A(x,y)\ge |x-y|/\sqrt{\lambda}$. Hence it follows from the last part of Step~1 above that
 for almost all $z\in\rn\setminus B_{d_A}(x,\,r)$,
 \begin{align*}
   \sqrt{H(z,\,\nabla u_{x,\,r}(z))}\le \lip_{d_A}(u,\,B_{d_A}(x,\,r))&\le \sqrt{\lambda}\, \lip(u,\,B_{d_A}(x,\,r))\\
        &= \sqrt{\lambda}\, \sup_{B_{d_A}(x,r)}|\nabla u|\le \lambda\, \sqrt{F(u,V)}.
 \end{align*}

{\it Step 3.} Now we set $$\wz A=1_{B_{d_A}(x,\,r)}\, A+
      \frac1{2\lambda} \,1_{\rn\setminus {B_{d_A}(x,\,r)}}\, A$$
 and denote by $\wz H$ and $d_{\wz A}$ the corresponding Hamiltonian and
intrinsic distance. Then for each $z\in B_{d_A}(x,\,r)$, there exists
$0<r_z<r-d_A(z,x)$ such that
whenever $d_{A}(z,\,y)<r_z$, we have \begin{equation}\label{e4.xx4}d_{\wz A}(z,\,y)=d_A(z,\,y).\end{equation}
This is seen by modifying the proof of Lemma~\ref{comp-dist} by replacing the
Euclidean
metric with the metric $d_A.$  Indeed, notice that there exists a constant
$\overline C\ge 1$ such that for all $z,\,y\in\rn$,
$$\frac1{\overline C}d_{  A}(z,\,y)\le d_{\wz A}(z,\,y)\le \overline C d_{ A}(z,\,y).$$
For $r_z< \frac1{(\overline C)^2} (r-d_A(z,x))$, we have
$$B_{d_{ A}}(z,\,r_z)\subset B_{d_{\wz A}}(z,\,\overline Cr_z)\subset B_{d_{  A}}(z,\,(\overline C)^2  r_z)\subset B_{d_{  A}}(x,\,  r).$$
Set
$$v_{z,\,r_z}(y)=\min\{d_{\wz A}(z,\,y),\,r_z\}.$$
We see that $$ H(y,\,\nabla v_{z,\,r_z}(y))=\wz H(y,\,\nabla v_{z,\,r_z}(y))\le (\lip_{d_{\wz A}}v_{z,\,r_z}(y))^2\le 1$$
for almost all $y\in B_{d_{\wz A}}(z,\,\overline Cr_z)$ and  $$ H(y,\,\nabla v_{z,\,r_z}(y))=\wz H(y,\,\nabla v_{z,\,r_z}(y))=0$$ for almost all $y\notin B_{d_{\wz A}}(z,\,\overline Cr_z)$.
Hence using $v_{z,\,r_z}$ in the definition of $d_A$,
we have $d_{A}(z,\,y)\ge d_{\wz A}(z,\,y)$ for all $y\in B_{d_{\wz A}}(z,\,\overline Cr_z)$ and hence $y\in B_{d_{ A}}(z,\,r_z)$.
Similar argument show to that $d_{A}(z,\,y)\ge d_{\wz A}(z,\,y)$ for all $y\in B_{d_{ A}}(z,\,r_z)$.
We conclude \eqref{e4.xx4} from these  inequalities.

{\it Step 4.} From the discussion in Steps~1 and~2,
$$\wz H(z,\,\nabla u_{x,\,r}(z))\le  F(u,\,V) $$ for almost every $z$.
Hence, using
$\frac1{\sqrt{F(u,\,V)}}u_{x,\,r}$ in  the definition of $ d_{\wz A}$,
we see that for all $z,\,y\in\rn$,
$$\frac1{\sqrt{F(u,\,V)}} |u_{x,\,r}(z)-u_{x,\,r}(y)|\le d_{\wz A}(z,\,y).$$
In particular, for all $z,\,y\in B_{d_A}(x,\,r)$, since $u(z)= u_{x,\,r}(z)$
and $u(y)= u_{x,\,r}(y)$,
we have
$$
|u(z)-u(y)|\le\sqrt{F(u,\,V) }d_{\wz A}(z,\,y) =\sqrt{F(u,\,V) }d_{A}(z,\,y).
$$
Applying this
with $z=x$ and $y\in B_{d_A}(x,\,r_x)$, we obtain \eqref{e4.xx2}.
\end{proof}

\begin{rem}\label{r4.7}\rm
By Lemma \ref{l2.1} above,
we always have $H(x,\,\nabla u(x))\le (\lip_{d_A}u(x))^2$ almost everywhere.
But Proposition \ref{p3.1} shows that it may happen that
$H(x,\,\nabla u(x))<(\lip_{d_A}u(x))^2$  on a set with positive measure,
and hence we can not expect $H(x,\,\nabla u(x))= (\lip_{d_A}u(x))^2$ almost
everywhere.
But as a compensation, Lemma \ref{l4.6} provides a weak variant for this:
$$\esup_{z\in B(x,\,r)}H(z,\,\nabla u(z))=\esup_{x\in B(x,\,r)} (\lip_{d_A}u(z))^2=\sup_{x\in B(x,\,r)} (\lip_{d_A}u(z))^2$$
for all $x$ and small $r$.
This phenomenon persists in the setting of general regular, strongly local Dirichlet forms; see the companion paper~\cite{ksz} of this paper.
\end{rem}

Notice that our concept of absolutely minimizing Lipschitz extension
defined in Section 1
corresponds to the strongly absolutely minimizing Lipschitz extension in \cite{jn}.
Recall that \eqref{e4.1} implies that $(\rn,\,d_A,\,dx)$ is a doubling metric measure space supporting
a $(1,1)$-Poincar\'e inequality.
 By Lemma~\ref{l4.5} and \cite[Theorem 3.1]{jn}, we have
the following existence result.

\begin{lem}\label{l4.8}
For every $f\in \lip(\partial U)$, there exists an absolutely minimizing Lipschitz extension.
\end{lem}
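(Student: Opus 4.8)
The plan is to transfer the problem to the metric measure space $(\rn,\,d_A,\,dx)$ and invoke the existence theory for absolutely minimizing Lipschitz extensions on metric measure spaces. First I would record the structural properties of this space: by the ellipticity bound \eqref{e4.1}, $d_A$ is globally bi-Lipschitz equivalent to the Euclidean distance (with constants controlled by $\sqrt\lz$), so $(\rn,\,d_A)$ is a complete, locally compact length space, hence geodesic, and $(\rn,\,d_A,\,dx)$ is doubling and supports a $(1,1)$-Poincar\'e inequality. In particular $f\in\lip(\partial U)$ is also Lipschitz with respect to $d_A$ on the compact set $\partial U$, so a McShane--Whitney extension in the metric $d_A$ produces a function in $\lip_{d_A}(U)\cap C(\overline U)$ agreeing with $f$ on $\partial U$, giving at least one admissible competitor.

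Next I would use Lemma \ref{l4.5} to match the notion of minimizer with the one used in \cite{jn}. Lemma \ref{l4.5} states that $u\in\lip(U)$ is an absolute minimizer on $U$ for the Hamiltonian $H$ if and only if for every $V\Subset U$ and every $v\in\lip(V)\cap C(\overline V)$ with $u|_{\partial V}=v|_{\partial V}$ one has $\esssup_{V}\lip_{d_A}u\le\esssup_{V}\lip_{d_A}v$, equivalently $\sup_{V}\lip_{d_A}u\le\sup_{V}\lip_{d_A}v$; the equivalence of the $\esssup$ and $\sup$ formulations is furnished by Lemma \ref{l4.6}. This is precisely the definition of a strongly absolutely minimizing Lipschitz extension in the sense of \cite{jn}, with the local pointwise Lipschitz constant $\lip_{d_A}(\cdot)$ playing the role of the ``upper gradient norm''. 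Thus producing an absolutely minimizing Lipschitz extension of $f$ for $H$ is equivalent to producing a strongly absolutely minimizing Lipschitz extension of $f$ on $(\overline U,\,d_A)$.

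Finally I would invoke \cite[Theorem 3.1]{jn}: on a locally compact, complete, doubling metric measure space supporting a $(1,1)$-Poincar\'e inequality, every Lipschitz boundary datum on $\partial U$ admits a strongly absolutely minimizing Lipschitz extension to $U$. Since $(\rn,\,d_A,\,dx)$ satisfies all these hypotheses by the first step, we obtain such an extension of $f$, and translating back through Lemma \ref{l4.5} yields an absolutely minimizing Lipschitz extension of $f$ for $H$, as claimed.

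I expect the only point requiring care to be the bookkeeping in the second step, namely verifying that the definition of ``absolute minimizer for $H$'' from Section~1, once rewritten via Lemma \ref{l4.5} and Lemma \ref{l4.6}, coincides exactly with the precise notion for which \cite[Theorem 3.1]{jn} is proved, together with checking that the standing assumptions of \cite{jn}---local compactness, completeness, doubling, and the Poincar\'e inequality---all hold for $(\rn,\,d_A,\,dx)$. Neither of these is a genuine obstacle; the entire analytic content of the existence statement is contained in \cite[Theorem 3.1]{jn}, and our task is only to set up the dictionary that makes it applicable.
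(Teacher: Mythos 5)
Your proposal is correct and is essentially the same argument as the paper's: the paper likewise observes that the notion of absolute minimizer from Section~1, after being reformulated by Lemma \ref{l4.5} (with Lemma \ref{l4.6} behind it), is exactly the strongly absolutely minimizing Lipschitz extension of \cite{jn}, notes that \eqref{e4.1} makes $(\rn,d_A,dx)$ doubling with a $(1,1)$-Poincar\'e inequality, and then quotes \cite[Theorem 3.1]{jn}. You spell out a few more of the structural properties (completeness, local compactness, the geodesic property, the McShane--Whitney competitor), but the key ingredients and logical path are identical.
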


The uniqueness of an absolute minimizing Lipschitz extension will follow from the comparison formula.

\begin{lem}\label{l4.9}
Let $u,\,v\in \lip(U)\cap C(\overline U)$ be absolute minimizers on $U$. Then
$$\max_{x\in \overline U}[u(x)-v(x)]= \max_{x\in \partial U}[u(x)-v(x)].$$
\end{lem}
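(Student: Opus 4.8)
The plan is to adapt the Armstrong--Smart comparison argument (as in \cite{as}) to the metric measure space $(\rn,d_A,dx)$, using Lemma \ref{l4.6} to translate everything into statements about the pointwise Lipschitz constant $\lip_{d_A}$. Set $w=u-v$ on $\overline U$ and suppose, towards a contradiction, that $\max_{\overline U}w > \max_{\partial U}w$; then the set $W=\{x\in U:\ w(x)>\max_{\partial U}w\}$ is a nonempty open subset with $\overline W\subset U$ and $w=\max_{\partial U}w$ on $\partial W$. By Lemma \ref{l4.6}, $\sup_W\lip_{d_A}u=\esssup_W\sqrt{H(\cdot,\nabla u)}$ and likewise for $v$, and both are finite since $u,v\in\lip(U)$. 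Denote by $S=\sup_W\lip_{d_A}u$ and $T=\sup_W\lip_{d_A}v$ these two ``local Lipschitz constants'' on $W$.

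First I would establish the key one-sided estimate: on the region where $w$ is close to its maximum, $u$ must be ``spreading out'' at least as fast as $v$ is, i.e. the comparison with cones forces $S\le T$ (and by symmetry $T\le S$), so $S=T=:L$. Concretely, fix $\epsilon>0$ small and let $W_\epsilon=\{x\in W:\ w(x)>\max_{\overline W}w-\epsilon\}$; this is open, nonempty, and for $\epsilon$ small, $\overline{W_\epsilon}\subset W$. On $W_\epsilon$, use that $u$ is an absolute minimizer to compare $u$ on $W_\epsilon$ with the ``cone function'' built from $d_A$ and the boundary data $u|_{\partial W_\epsilon}$; Lemma \ref{l4.5}(ii) lets us phrase the minimality via $\sup_V\lip_{d_A}$ rather than $\esssup$, which is what makes the pointwise comparison-with-cones argument go through in this non-smooth setting. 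The standard comparison-with-cones property of absolute minimizers (which follows from Lemma \ref{l4.5}, since in a geodesic space the distance cone $x\mapsto a-c\,d_A(x,x_0)$ realizes the extremal Lipschitz behavior) yields that on $W_\epsilon$, $\lip_{d_A}u \le \sup_{\partial W_\epsilon}\lip_{d_A}u$-type bounds, and, combined with the fact that $w$ is near-maximal there, one extracts that $u$ increases at rate $\ge$ (the rate of increase of $v$ decreasing, i.e.\ $\ge T$) somewhere inside. Running this at a sequence $\epsilon\to 0$ and using upper semicontinuity of $x\mapsto \lip_{d_A}u(x)$-type quantities (or rather working with $\sup$ over balls, which behaves well under limits), I would conclude $S\ge T$; the symmetric argument gives $T\ge S$, hence $S=T=L$.

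Next I would derive the contradiction from $S=T=L$ by a ``both can't be extremal at the same point in the same direction'' argument. Choose a point $x_*\in\overline W$ where $w$ attains its maximum over $\overline W$; then moving from $\partial W$ to $x_*$ along a $d_A$-geodesic $\gamma$, the increment of $u$ is at most $L\,d_A$ along $\gamma$ and the decrement of $v$ is at most $L\,d_A$ along $\gamma$, but the increment of $w=u-v$ along $\gamma$ equals $w(x_*)-\max_{\partial W}w$, which by the definition of $W$ and the choice of $x_*$ forces \emph{both} $u$ to increase at the full rate $L$ and $v$ to decrease at the full rate $L$ along $\gamma$, on a whole geodesic segment. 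Then perturb: replace $u$ on a small ball $B_{d_A}(y,\rho)\Subset W$ around an interior point $y$ of $\gamma$ by the McShane extension of $u|_{\partial B_{d_A}(y,\rho)}$ with constant $L$, exactly as in the proof of Lemma \ref{l4.6}; because $u$ is already moving at the full rate $L$ along $\gamma$ through $y$, one can strictly lower $\sup\lip_{d_A}$ in a neighborhood unless $u$ is locally a $d_A$-cone, and the same for $v$; but $u$ being a $d_A$-cone with vertex ``ahead'' of $y$ and $v$ being a $d_A$-cone with vertex ``behind'' $y$ along the \emph{same} geodesic contradicts $w$ having an interior maximum at $x_*$ with the given strict inequality $\max_{\overline U}w>\max_{\partial U}w$. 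This is the standard Armstrong--Smart / Jensen contradiction, transplanted via $d_A$.

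The main obstacle, and where the care is needed, is the first step: proving $S=T$, i.e.\ the comparison-with-cones machinery in the purely metric setting with \emph{no} Aronsson equation available. One must check that the distance cones $x\mapsto a-c\,d_A(x,x_0)$ genuinely realize the pointwise Lipschitz bound $\lip_{d_A}(\cdot)\le c$ at every point (this uses that $(\rn,d_A)$ is a geodesic space, noted in the text), and that the absolute-minimizer condition in the form of Lemma \ref{l4.5}(ii)—a \emph{supremum}, not an essential supremum—is strong enough to run the usual "increasing slope / decreasing slope" estimates. Once the comparison-with-cones property of absolute minimizers on $(\rn,d_A)$ is in hand, the rest is the classical argument. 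I would cite \cite{as} (and \cite{pssw,jn}) for the structure and only spell out the points where the change of metric from Euclidean to $d_A$ requires Lemma \ref{l4.6} and the geodesic property.
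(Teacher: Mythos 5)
You correctly identify Armstrong--Smart \cite{as} as the model, you correctly note that Lemma~\ref{l4.6} lets one phrase minimality through $\sup_V\lip_{d_A}$ rather than an essential supremum, and you correctly flag that the geodesic structure of $(\rn,d_A)$ is what makes cone comparisons run. The paper's proof does indeed follow \cite{as}: it introduces $u^r(x)=\sup_{d_A(z,x)\le r}u(z)$, $u_r(x)=\inf_{d_A(z,x)\le r}u(z)$ and the slope operators $S_r^\pm$, proves the key ``increasing/decreasing slope'' inequality $S_r^-u^r(x)-S_r^+u^r(x)\le 0\le S_r^-v_r(x)-S_r^+v_r(x)$ on $U_{2r}$ directly from comparison with cones (Lemma~\ref{l4.51}), then derives the patching identity $\sup_{U_r}[u^r-v_r]=\sup_{U_r\setminus U_{2r}}[u^r-v_r]$ by a short case analysis at a boundary point of the maximum set, and finally lets $r\to 0$.

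Your detailed sketch, however, is not the Armstrong--Smart argument; it is closer to Jensen's original strategy (match extremal slopes of $u$ and $v$ on the contact set, then extract a contradiction from local cone rigidity along a geodesic), and both halves of that sketch have genuine gaps. In the first half, the step asserting that cone comparison for $u$, ``combined with the fact that $w$ is near-maximal there, one extracts that $u$ increases at rate $\ge T$ somewhere inside,'' is not justified: comparison with cones controls $u$ by cones built from $u$'s \emph{own} boundary data, so it does not on its own transfer slope information from $v$ to $u$; one needs an inequality that genuinely couples the two functions, which is exactly what the $S_r^\pm$ inequality supplies. In the second half, the claim that ``one can strictly lower $\sup\lip_{d_A}$ in a neighborhood unless $u$ is locally a $d_A$-cone'' is a rigidity/strong-maximum-type statement that is neither proved in the paper nor an easy consequence of the tools at hand; establishing it would be a substantial piece of work, and is precisely the machinery that the Armstrong--Smart proof was designed to avoid. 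As written, the proposal does not close to a complete proof, whereas the $S_r^\pm$ route the paper takes is elementary and rigorous.
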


To prove Lemma \ref{l4.9}, we need the following lemmas.
First, as a consequence of Lemma~\ref{l4.5}, we have the following result.
\begin{lem}\label{l4.50}
If $u$ is an absolute minimizer on $U$, then  for all open subsets $V\Subset U$,
$\lip_{d_A}(u,V)=\lip_{d_A}(u,\partial V).$
\end{lem}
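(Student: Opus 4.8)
The plan is to prove the two inequalities separately, writing $L:=\lip_{d_A}(u,\partial V)$, which is finite because $u$ is Lipschitz on $\overline V$ and $d_A$ is locally comparable to the Euclidean metric. The inequality $\lip_{d_A}(u,\partial V)\le\lip_{d_A}(u,V)$ is the easy half: every point of $\partial V$ is a limit of points of $V$, so continuity of $u$ and of $d_A$ gives $\lip_{d_A}(u,\overline V)=\lip_{d_A}(u,V)$, while $\partial V\subset\overline V$. All the work lies in $\lip_{d_A}(u,V)\le L$, and the starting point there is to introduce the two McShane-type extensions of $u|_{\partial V}$ with constant $L$,
\[
w^+(x)=\inf_{z\in\partial V}\bigl(u(z)+L\,d_A(x,z)\bigr),\qquad
w^-(x)=\sup_{z\in\partial V}\bigl(u(z)-L\,d_A(x,z)\bigr),
\]
which are $L$-Lipschitz on $\rn$ with respect to $d_A$ and coincide with $u$ on $\partial V$. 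Feeding $w^+$ into Lemma~\ref{l4.5}(ii), with $V$ itself as the competing set, immediately gives $\sup_{x\in V}\lip_{d_A}u(x)\le L$. The key remaining step is the sandwich estimate $w^-\le u\le w^+$ on $\overline V$; this is the substitute, in the present low-regularity setting, for the classical ``comparison with cones'', and I expect it to be the main obstacle, since neither the Aronsson equation nor the comparison principle of Lemma~\ref{l4.9} is available at this stage (indeed Lemma~\ref{l4.9} is proved later, using the present lemma).

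To establish $u\le w^+$ (the bound $u\ge w^-$ then follows by applying it to $-u$, which is again an absolute minimizer because $H(x,-\xi)=H(x,\xi)$), I would argue by contradiction. If $u(x_0)>w^+(x_0)$ for some $x_0\in V$, pick, by compactness of $\partial V$, a point $z_0\in\partial V$ with $u(x_0)>\psi(x_0)$, where $\psi(x):=u(z_0)+L\,d_A(x,z_0)$, and set $\boz_0:=\{x\in V:u(x)>\psi(x)\}$, which is open, contains $x_0$, and satisfies $\overline{\boz_0}\subset\overline V\subset U$, hence $\boz_0\Subset U$. A short continuity argument (using that $\psi$ is continuous and that $u(z)\le u(z_0)+L\,d_A(z,z_0)=\psi(z)$ for all $z\in\partial V$, by the definition of $L$) shows $u=\psi$ on $\partial\boz_0$, so $\psi|_{\overline{\boz_0}}$ is an admissible competitor for $u$ on $\boz_0$; Lemma~\ref{l4.5}(ii) then gives $\lip_{d_A}u\le L$ throughout $\boz_0$ (as $\psi$ is $L$-Lipschitz for $d_A$). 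Now take a $d_A$-geodesic $\gamma$ from $z_0$ to $x_0$ and let $p$ be its last point outside $\boz_0$, so that $p\in\partial\boz_0$ and $\gamma$ runs inside $\boz_0$ from $p$ to $x_0$; integrating the upper gradient $\lip_{d_A}u\le L$ along that sub-geodesic (recall $\lip_{d_A}u$ is an upper gradient of the locally Lipschitz function $u$), I obtain
\[
u(x_0)\le u(p)+L\,d_A(p,x_0)=\psi(p)+L\,d_A(p,x_0)=u(z_0)+L\,d_A(z_0,x_0)=\psi(x_0),
\]
the third equality because $p$ lies on the geodesic joining $z_0$ to $x_0$; this contradicts $u(x_0)>\psi(x_0)$.

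Finally, I would deduce $\lip_{d_A}(u,V)\le L$ from the two ingredients above. Given distinct $x,y\in V$, choose a $d_A$-geodesic $\gamma$ between them. If $\gamma$ stays inside $V$, then integrating the upper gradient $\lip_{d_A}u\le L$ along $\gamma$ gives $|u(x)-u(y)|\le L\,d_A(x,y)$. Otherwise $\gamma$ meets $\partial V$ at some point $z^*$, which then satisfies $d_A(x,z^*)+d_A(z^*,y)=d_A(x,y)$; the sandwich estimate gives $u(x)\le w^+(x)\le u(z^*)+L\,d_A(x,z^*)$ and $u(y)\ge w^-(y)\ge u(z^*)-L\,d_A(z^*,y)$, and subtracting, together with the same inequality with $x$ and $y$ interchanged, yields $|u(x)-u(y)|\le L\,d_A(x,y)$ again. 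Hence $\lip_{d_A}(u,V)\le L$, which combined with the easy half gives the claimed identity $\lip_{d_A}(u,V)=\lip_{d_A}(u,\partial V)$.
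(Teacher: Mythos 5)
Your proof is correct, and the overall skeleton (easy inequality by continuity, McShane extensions, Lemma~\ref{l4.5}(ii) to get $\sup_V\lip_{d_A}u\le L$, then geodesics to pass from pointwise Lipschitz constants to the global one) matches the paper's. Where you diverge is in how you treat a geodesic between $x,y\in V$ that leaves $V$: the paper simply lets $\hat x,\hat y\in\gamma\cap\partial V$ be the crossing points closest to $x$ and $y$, splits
\[
|u(x)-u(y)|\le |u(x)-u(\hat x)|+|u(\hat x)-u(\hat y)|+|u(\hat y)-u(y)|,
\]
bounds the outer pieces by $\sup_V\lip_{d_A}u\le L$ (integrating along the sub-geodesics, which lie in $V$) and the middle piece directly by the definition of $L=\lip_{d_A}(u,\partial V)$, then sums the geodesic lengths. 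You instead first prove the cone comparison (sandwich) $w^-\le u\le w^+$ and use it for the exit case. Your sandwich argument is sound — the set $\boz_0=\{u>\psi\}\cap V$, the boundary matching $u=\psi$ on $\partial\boz_0$ (needing both $u\le\psi$ on $\partial V$ and the limit from inside $\boz_0$), the application of Lemma~\ref{l4.5}(ii) with the cone as competitor, and the geodesic integration from the last exit point $p$ all check out, as does the reduction to $-u$ for $w^-\le u$. What this buys: you effectively give a self-contained proof of the special case of the paper's Lemma~\ref{l4.51} (comparison with cones for cone vertices on $\partial V$) that does not invoke Lemma~\ref{l4.50}, whereas the paper derives Lemma~\ref{l4.51} from Lemma~\ref{l4.50}. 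What it costs: your route is longer, and the paper's three-piece split makes the sandwich unnecessary for the present lemma. Either way, both you and the paper take as given that $\lip_{d_A}u$ serves as an upper gradient of $u$ along geodesics, so you are not relying on anything the paper does not also rely on.
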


\begin{proof}
Notice that for every pair $x,\,y\in\partial V$ with $x\ne y$, by the continuity of  $d_A$
we can find $x_n,\,y_n\in V$ such that $x_n\to x$ and $y_n\to y$.  By the continuity of $u$,
  $\frac{|u(x_n)-u(y_n)|}{d_A(x_n,\,y_n)}\to\frac{|u(x)-u(y)|}{d_A(x,\,y)}$
  and hence $\lip_{d_A}(u,\,V)\ge \lip_{d_A}(u,\,\partial V)$.
Thus it suffices to prove the converse.

For $x\in\rn$, set $$w(x)=\sup_{z\in\partial V}[u(z)+\lip_{d_A}(u,\,\partial V) d_A(x,\,z)].$$ Then $\lip_{d_A}(w,\,\rn)=\lip_{d_A}(u,\,\partial V)$
and  $w=u$ on $\partial V$.
Applying Lemma \ref{l4.5}, we have $$\sup_{x\in V}\lip_{d_A} u(x)\le \sup_{x\in V}\lip_{d_A} w(x)\le
 \lip_{d_A}(u,\,\partial V).$$
Now, given a pair of points  $x,\,y\in U$, let $\gz$ be a $d_A$-geodesic curve joining $x$ and $y$.
The existence of $\gz$ is guaranteed by the fact that $(\rn,\,d_A)$
is a geodesic space~\cite{s10}.
If $\gz\subset V$, then
$$|u(x)-u(y)|\le \int_\gz\lip\, u(z)\,d_Az\le d_A(x,\,y)\,\sup_{x\in V}\lip_{d_A} u(x)
    \le  d_A(x,y)\, \lip_{d_A}(u,\,\partial V). $$
Here $d_Az$ denotes arc-length integral on $\gz$ with respect to the metric
$d_A$.
If $\gamma\not\subset V$, denote by $\hat x$ and $\hat y\in \gz\cap\partial V$
points that have shortest distance to $x$ and $y$,
respectively.
Then
\begin{eqnarray*}
|u(x)-u(y)|&&\le |u(x)-u(\hat x)|+|u(\hat x)-u(\hat y)|+|u(\hat y)-u(y)|\\
 &&\le [d_A( x,\,\hat x)+d_A( y,\,\hat y)]\sup_{x\in V}\lip_{d_A} u(x)  +d_A( \hat x,\,\hat y)\lip_{d_A}(u,\,\partial V)\\
 &&\le d_A(  x,\, y)\lip_{d_A}(u,\,\partial V).
\end{eqnarray*}
In either case, we have the inequality
\[
   \frac{|u(x)-u(y)|}{d_A(x,y)}\le \lip_{d_A}(u,\,\partial V).
\]
This means that $\lip_{d_A}(u,\,V)\le \lip_{d_A}(u,\,\partial V)$.
\end{proof}

A function $u\in C(U)$ is said to satisfy the
{\it property of comparison with cones}
 if
for all each subset $V\Subset U$, and for all $a\ge0$, $b\in\rr$ and
$x_0\in\rn\setminus V$, we have

(i) $\max_{x\in \partial V}[u(x)-C_{b,a,x_0}(x)]\le0$ implies
$\max_{x\in V}[u(x)-C_{b,a,x_0}(x)]\le0;$

(ii) $\max_{x\in \partial V}[u(x)-C_{b,-a,x_0}(x)]\ge0$ implies
$\max_{x\in V}[u(x)-C_{b,-a,x_0}(x)]\ge0,$

\noindent
where the cone function is defined by
$C_{b,a,x_0}(x)=b+a\,d_A(x,\,x_0).$
It is known that an absolute minimizer satisfies the comparison property
with cones;
see \cite{ceg} for Euclidean case and \cite{acj,jn,gwy,cp,dmv} for the
setting of metric spaces that are length spaces. For the sake of completeness,
we sketch a proof below.

\begin{lem}\label{l4.51}
An absolute minimizer satisfies the property of comparison with cones.
\end{lem}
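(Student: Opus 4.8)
The plan is to show that an absolute minimizer $u$ satisfies comparison with cones from above; the statement for comparison from below (part (ii)) follows by applying the result from above to $-u$, since $-C_{b,-a,x_0} = -b + a\,d_A(\cdot,x_0)$ is again a cone of the required form and $-u$ is an absolute minimizer whenever $u$ is. So fix $V\Subset U$, $a\ge0$, $b\in\rr$, $x_0\in\rn\setminus V$, and suppose $u(x)\le C_{b,a,x_0}(x)$ for all $x\in\partial V$. We want $u\le C_{b,a,x_0}$ on $V$.

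First I would reduce to a statement about $\lip_{d_A}$. The key point is that the cone function $C=C_{b,a,x_0}(x)=b+a\,d_A(x,x_0)$ has $\lip_{d_A}C(x)\le a$ everywhere (this is immediate from the triangle inequality for $d_A$), and in fact $\lip_{d_A}(C,W)=a$ on any open $W$ with $x_0\notin\overline W$ provided $W$ is ``large enough'' — more precisely, using that $(\rn,d_A)$ is a geodesic space (cited from \cite{s10}), along a geodesic ray emanating from $x_0$ the function $C$ increases at unit rate times $a$, so $\lip_{d_A}(C,W)=a$ whenever $W$ contains a nondegenerate geodesic segment pointing away from $x_0$. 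Next, suppose for contradiction that $\max_{\overline V}(u-C)=:\mu>0$; by continuity of $u$ and $C$ and the hypothesis on $\partial V$, this max is attained at some interior point, and the open set $W=\{x\in V: u(x)-C(x)>\mu/2\}$ is a nonempty open subset compactly contained in $V$ with $u=C+\mu/2$ on $\partial W$ and $x_0\notin\overline W$.

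The main step is then to derive a contradiction by comparing $u$ with $C+\mu/2$ on $W$ via Lemma \ref{l4.5} (or rather its consequence Lemma \ref{l4.50}). Since $u$ and $C+\mu/2$ agree on $\partial W$, Lemma \ref{l4.5}(ii) gives
\[
\sup_{x\in W}\lip_{d_A}u(x)\le \sup_{x\in W}\lip_{d_A}(C+\mu/2)(x)=\lip_{d_A}(C,W)=a,
\]
using the computation above. On the other hand, I claim $\sup_{W}\lip_{d_A}u > a$, which is the contradiction. To see this, pick the interior maximum point $x_*$ of $u-C$ (so $u(x_*)-C(x_*)=\mu$) and a point $\hat x\in\partial W$ realizing, along a geodesic from $x_*$, the boundary value $u(\hat x)-C(\hat x)=\mu/2$; running along a geodesic from $x_0$ through $x_*$ outward, $C$ grows at rate exactly $a$, while $u-C$ must \emph{strictly decrease} from $\mu$ down to $\mu/2$ over a $d_A$-distance at most $\diam_{d_A}(W)$, forcing $u$ to grow strictly faster than $a$ somewhere along that geodesic; quantitatively, choosing the geodesic direction in which $C$ increases most efficiently and using that $x_*$ is a strict interior maximum of $u-C$, one finds a pair of points in $W$ with $\frac{|u(y)-u(z)|}{d_A(y,z)}>a$, hence $\sup_W \lip_{d_A}u>a$.

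The step I expect to be the genuine obstacle is making the last claim — that $u-C$ cannot be ``absolutely minimizing'' with respect to the slope constraint $\le a$ while having a strict interior bump — fully rigorous in the metric setting; in the Euclidean case \cite{ceg} this is the standard ``cone comparison'' argument and it transfers to geodesic spaces as in \cite{acj,jn,gwy,cp,dmv}, so I would follow those references for the details, using only: (1) $(\rn,d_A)$ is geodesic, (2) Lemma \ref{l4.5}, and (3) the elementary slope properties of cone functions $C_{b,a,x_0}$. Everything else — the reduction of (ii) to (i) via $u\mapsto-u$, and the localization to the superlevel set $W$ — is routine.
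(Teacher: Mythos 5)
Your plan --- reduce (ii) to (i) via $u\mapsto -u$, pass to a superlevel set $W$ of $u-C_{b,a,x_0}$, bound the Lipschitz constant of $u$ on $W$ by $a$ using Lemma~\ref{l4.5}/Lemma~\ref{l4.50}, and then produce a pair of points with $d_A$-difference quotient exceeding $a$ via a geodesic to $x_0$ --- is essentially the paper's proof (the paper uses $W=\{u>C_{b,a,x_0}\}$ rather than $\{u-C_{b,a,x_0}>\mu/2\}$, an inessential difference). However, the step you flag as the obstacle is a genuine gap, and moreover your geodesic argument points in the wrong direction. Writing $C=C_{b,a,x_0}$: along the ray ``from $x_0$ through $x_*$ outward,'' $C$ \emph{increases} at rate $a$. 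If $u-C$ drops from $\mu$ to $\mu/2$ there, then $u=(u-C)+C$ increases at rate strictly \emph{less} than $a$, not more; the resulting quotient $\frac{|u(x_*)-u(\hat x)|}{d_A(x_*,\hat x)}=\bigl|a-\frac{\mu/2}{d_A(x_*,\hat x)}\bigr|$ need not exceed $a$, so no contradiction follows.

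The geodesic must instead be traversed from $x_*$ \emph{towards} $x_0$. Since $x_0\notin V\supset\overline W$, the geodesic $\gz$ from $x_*$ to $x_0$ must leave $W$; let $z\in\partial W$ be its first exit point. Then $z$ lies between $x_*$ and $x_0$ on $\gz$, so $d_A(x_*,x_0)=d_A(x_*,z)+d_A(z,x_0)$ and hence $C(x_*)-C(z)=a\,d_A(x_*,z)$. Combining this with $u(x_*)-C(x_*)=\mu$ and $u(z)-C(z)=\mu/2$ gives
\[
\frac{u(x_*)-u(z)}{d_A(x_*,z)}=a+\frac{\mu/2}{d_A(x_*,z)}>a,
\]
so $\lip_{d_A}(u,\overline W)>a$, which together with $\lip_{d_A}(u,W)=\lip_{d_A}(u,\partial W)\le a$ from Lemma~\ref{l4.50} (using $u=C+\mu/2$ on $\partial W$) and the continuity of $u$ yields the contradiction. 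A minor structural remark: the paper's route through Lemma~\ref{l4.50} and the two-point quantity $\lip_{d_A}(u,W)$ is cleaner than your route through Lemma~\ref{l4.5}(ii) and $\sup_W\lip_{d_A}u(\cdot)$, since the geodesic computation bounds the former directly, while lower-bounding the latter would require an extra integration of $\lip_{d_A}u$ along $\gz$.
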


\begin{proof}
 We prove Condition~(i), with the proof of Condition~(ii) similar (and left to the interested reader).
Let $u$ be an absolute minimizer and assume that $\max_{x\in \partial V}[u(x)-C_{b,a,x_0}(x)]\le0$.
Suppose that the condition $\max_{x\in  V}[u(x)-C_{b,a,x_0}(x)]\le0$ is \emph{not} true.
Denote by $W$  the open set of all $x\in V$ such that $u(x)>C_{b,a,x_0}(x)$.
By the above supposition, $W$ is not empty.
We can see that $u= C_{b,a,x_0}$ on $\partial W$. Since $W\subset V\Subset U$, by Corollary~\ref{l4.50}
we have
$\lip_{d_A}(u,\overline W)=\lip_{d_A}(u,W)=\lip_{d_A}(u,\partial W)=a$.
For $x\in W$, let
$\gz$ be the $d_A$-geodesic curve joining $x$ and $x_0$, and
take $z\in\partial V\cap \gz$ be a closest point to $x$.
Then $$u(x)-u(z)>C_{b,a,x_0}(x)-C_{b,a,x_0}(z)= ad_A(x_0,\,x)-a{d_A}(x_0,\,z)=ad_A(x,\,z),$$
which implies that $\lip_{d_A}(u,\,W)>a$. This is a contradiction. So $W$ must be empty.
 \end{proof}

 With the aid of Lemma~\ref{l4.5} and Lemma~\ref{l4.51},
Lemma~\ref{l4.9} will be proved by following the procedure from~\cite{as}.
Since the proof in \cite{as} is for the case $A=I_n$,
we write down the details below for the reader's convenience.
For $x\in U_r=\{z\in U:\ \overline{B_{d_A}(z,\,r)}\subset U\}$ with $r>0$,
we   set $u^r(x)= \sup_{d_A(z,\,x)\le r} u(z)$ and $u_r(x)= \inf_{d_A(z,\,x)\le r} u(z)$ and also
$$S_r^+u(x)= \frac{u^r(x)-u(x)}{r},\quad S_r^-u(x)=\frac{u(x)-u_r(x)}{r}.$$
\begin{proof}[Proof of Lemma \ref{l4.9}.]
First we claim that for $x\in U_{2r}$,
\begin{equation}\label{e4.x5}
S_r^-u^r(x)-S_r^+u^r(x)\le0\le S_r^-v_r(x)-S_r^+v_r(x).
\end{equation}
Indeed, let $y\in \overline {B_{d_A}(x,\,r)}$ and $z\in \overline {B_{d_A}(x,\,2r)}$
such that $u^r(x)=u(y)$ and $(u^{r})^r(x)=u^{2r}(x)=u(z)$. Observe
 that $(u^r)_r(x)\ge u(x)$. We then have
$$S_r^-u^r(x)-S_r^+u^r(x)=\frac1{r}[2u^r(x)-(u^r)^r(x)-(u^r)_r(x)]
\le \frac1{r}[2u (y)-u(z)-u(x)].$$
For $w\in\boz$ such that $d_A(x,\,w)=2r$, we have
$$u(w)\le u (z)=u(x)+ [u (z)-u(x)]= u(x)+\frac{[ u (z) -u(x)]}{2r}d_A(w,\,x).$$
Thus the comparison with cones property of $u$ implies that the inequality
\[
   u(w)\le u(x)+\frac{[ u (z) -u(x)]}{2r}d_A(w,\,x)
\]
holds for all
$w\in\boz$ with $d_A(x,\,w)\le 2r$. In particular,  taking $w=y$ and by $d_A(y,\,x)\le r$,
we have
$$u(y)\le  u(x)+\frac{[ u (z) -u(x)]}{2r}d_A(y,\,x)\le   u(x)+\frac12{[ u (z) -u(x)]}=\frac12{[ u (z)+u(x)]},$$
which implies the first inequality of \eqref{e4.x5}. The second inequality of \eqref{e4.x5} follows similarly.

Notice that~\eqref{e4.x5} further implies
\begin{equation}\label{e4.xx5}
    \sup_{x\in U_r}[u^r(x)-v_r(x)]= \sup_{x\in U_r\setminus U_{2r}}[u^r(x)-v_r(x)].
\end{equation}
 Given the above,   letting $r\to0$ in~\eqref{e4.xx5}, we obtain Lemma~\ref{l4.9}.  Thus it
remains only to prove~\eqref{e4.xx5}.
Assume that \eqref{e4.xx5} is {\it not} true.
Then there is some $r>0$ for which
\[
  sup_{x\in U_r}[u^r(x)-v_r(x)] > \sup_{x\in U_r\setminus U_{2r}}[u^r(x)-v_r(x)].
\]
By the continuity of $u^r -v_r $, there must exist some $y\in \overline {U}_{r}$ such that
$[u^r(y)-v_r(y)]=\sup_{x\in U_{ r}}[u^r(x)-v_r(x)]$. Because~\eqref{e4.xx5} fails, we know that $y\in U_{2r}$.
Denote by $E$ all such $y$ and set $F=\{x\in E:
\ u^r(x)=\max_{z\in E} u^r(z)\}$. Then $F$ is a closed subset of  $U_{2r}$  by the continuity of $u^r$ again.
Choose $x_0\in\partial F$. Then  because $x_0\in E$, for every $x\in U$ we have
\[
   u^r(x_0)-v_r(x_0)\ge u^r(x )-v_r(x ),
\]
and it follows from the fact that $x_0\in U_{2r}$ and for every $x\in B_{d_A}(x_0,r)$,
\[
   u^r(x_0)-v_r(x_0)\ge \inf_{z\in B_{d_A}(x_0,r)} u^r(z)-v_r(x)=(u^r)_r(x_0)-v_r(x).
\]
Taking the infimum over $x\in B_{d_A}(x_0,r)$, we obtain
$S_r^-v_r(x_0)\le S_r^-u^r(x_0)$.

{\it Case 1:} $S^+_ru^r(x_0)=0$.
Then by \eqref{e4.x5}, we have $S_r^-u^r(x_0)\le0$
and hence $S_r^-u^r(x_0)=0$,
which together with $S_r^-v_r(x_0)\le S_r^-u^r(x_0)$ implies that $S_r^-v_r(x_0)=0$. By \eqref{e4.x5} again, we have
$S_r^+v_r(x_0)\le0$ and hence $S_r^+v_r(x_0)=0$. So
$u^r$ and $v_r$ are constant on
$  B_{d_A}(x_0,\,r)$. This contradicts $x_0\in\partial F$.

{\it Case 2:} $S^+_ru^r(x_0)>0$.
Choose $z\in\overline B_{d_A}(x_0,\,r)$ such that
$rS^+_ru^r(x_0)=u^r(z)-u^r(x_0)$. Since $u^r(z)>u^r(x_0)$ and $x_0\in F$, we know that
$z\notin E$.
Therefore $u^r(x_0)-v_r(x_0)> u^r(z)-v_r(z)$, and hence
$$rS^+_rv_r(x_0)\ge v_r(z)-v_r(x_0)>u^r(z)-u^r(x_0)=rS^+_ru(x_0),$$
which together with $S_r^-v_r(x_0)\le S_r^-u^r(x_0)$  again yields that
$$S^+_rv_r(x_0)-S_r^-v_r(x_0)> S^+_ru^r(x_0)-S_r^-u^r(x_0).$$
This is a contradiction of~\eqref{e4.x5}.

So our assumption is not correct and hence \eqref{e4.xx5} is true.
\end{proof}

Now we are ready to prove Theorem \ref{t4.1}.
\begin{proof}[Proof of Theorem \ref{t4.1}]
Theorem \ref{t4.1} (i) follows from  Lemmas \ref{l4.8} and \ref{l4.9}.
Theorem \ref{t4.1} (ii) follows from Lemma \ref{l4.5} with the observation that under the assumption \eqref{e4.xx1},
$\lip_{d_A}u=\lip_{d_{ \wz A}}u$ almost everywhere for every $u\in \lip_\loc(\rn)$.
\end{proof}

Proposition \ref{c4.3} will follow from Theorem \ref{t3.1} and the following result; see \cite[Lemma 5.6]{jn} for the proof.

\begin{lem}\label{l4.52}
Let $u\in\lip (U)$ be an absolute minimizer. Then for all $x\in U$,
we have
$$\lip_{d_A}u(x)=\lim_{r\to0} S^+_ru(x)=-\lim_{r\to0} S^-_ru(x).$$
Moreover, $S^+_ru(x)$ and $S^-_ru(x)$ are increasing function with respect to
$r\in(0,\,d_A(x,\,\partial U))$.
\end{lem}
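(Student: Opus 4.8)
The plan is to treat the two assertions in turn, both grounded in the comparison with cones property of absolute minimizers (Lemma~\ref{l4.51}). I would first record the pointwise cone estimate: for every $x\in U$ and every $0<r<d_A(x,\partial U)$,
\[
u(w)\le u(x)+S_r^+u(x)\,d_A(w,x)\quad\text{and}\quad u(w)\ge u(x)-S_r^-u(x)\,d_A(w,x)\qquad\bigl(w\in\overline{B_{d_A}(x,r)}\bigr).
\]
Each of these follows from the two-sided comparison with cones exactly as the inequality $u(w)\le u(x)+\frac{u(z)-u(x)}{2r}\,d_A(w,x)$ is derived inside the proof of Lemma~\ref{l4.9}: one tests $u$ against the cone with vertex $x$ whose boundary value on $\partial B_{d_A}(x,r)$ is $u^r(x)$ (respectively the downward cone with boundary value $u_r(x)$), using a thin spherical annulus and an infinitesimal shift of the cone to deal with the vertex lying inside. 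Taking the supremum (respectively infimum) of these inequalities over $w\in\overline{B_{d_A}(x,s)}$ for $0<s\le r$ turns them into $u^s(x)-u(x)\le s\,S_r^+u(x)$ and $u(x)-u_s(x)\le s\,S_r^-u(x)$, i.e. $S_s^\pm u(x)\le S_r^\pm u(x)$; this is exactly the asserted monotonicity of $r\mapsto S_r^\pm u(x)$, and it guarantees that the limits $a^\pm(x):=\lim_{r\to0}S_r^\pm u(x)=\inf_{0<r<d_A(x,\partial U)}S_r^\pm u(x)$ exist.

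It then remains to identify $a^+(x)$ and $a^-(x)$ with $\lip_{d_A}u(x)$ (the sign in the statement being absorbed into the convention for $S_r^-$). The bounds $a^\pm(x)\le\lip_{d_A}u(x)$ are immediate: from
\[
S_r^+u(x)=\frac1r\Bigl(\sup_{d_A(w,x)\le r}u(w)-u(x)\Bigr)\le\sup_{0<d_A(w,x)\le r}\frac{|u(w)-u(x)|}{d_A(w,x)},
\]
letting $r\to0$ and recalling that $\lip_{d_A}u(x)$ is defined as a $\limsup$ gives $a^+(x)\le\lip_{d_A}u(x)$, and symmetrically for $a^-(x)$. Next, $\max\{a^+(x),a^-(x)\}=\lip_{d_A}u(x)$: pick $w_k\to x$ realizing the $\limsup$ defining $\lip_{d_A}u(x)$ and pass to a subsequence along which $u(w_k)-u(x)$ keeps a fixed sign; if $u(w_k)\ge u(x)$, then with $r_k:=d_A(w_k,x)\to0$ one has $S_{r_k}^+u(x)\ge\frac{u(w_k)-u(x)}{r_k}\to\lip_{d_A}u(x)$, and since the monotone limit $a^+(x)$ is realized along every null sequence of radii, $a^+(x)\ge\lip_{d_A}u(x)$, hence $a^+(x)=\lip_{d_A}u(x)$; if $u(w_k)\le u(x)$, the same argument applied to $S_{r_k}^-u(x)\ge\frac{u(x)-u(w_k)}{r_k}$ yields $a^-(x)=\lip_{d_A}u(x)$. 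Thus it suffices to prove $a^+(x)=a^-(x)$, which combined with the two facts just shown forces $a^+(x)=a^-(x)=\lip_{d_A}u(x)$, the desired identity.

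The genuine difficulty is the equality $a^+(x)=a^-(x)$, namely that the two one-sided slopes of an absolute minimizer — each of which increases to a limit merely because of comparison with cones from the corresponding side — are forced to have a \emph{common} limit. This is the core of the Crandall--Evans--Gariepy theory of absolute minimizers \cite{ceg}, and it truly needs comparison with cones from both sides (a function enjoying comparison from one side only, such as $x\mapsto|x_1|$ at the hyperplane $\{x_1=0\}$, violates the conclusion). The way I would carry it out is to choose, for small $r$, a near-extremal pair $p_r,q_r$ on $\partial B_{d_A}(x,r)$ and combine the two pointwise cone estimates above with the inequality $u(p_r)\le u^r(x)$, $u(q_r)\ge u_r(x)$ to get a lower bound $c\,d_A(p_r,q_r)\le r\,(S_r^+u(x)+S_r^-u(x))$ with $c\to\lip_{d_A}u(x)$, and then to use the two-sided comparison once more to see that such extremal pairs on small spheres are asymptotically antipodal, $d_A(p_r,q_r)/r\to2$, so that $a^+(x)+a^-(x)\ge2\lip_{d_A}u(x)$ and hence $a^+(x)=a^-(x)=\lip_{d_A}u(x)$; the details, which require that $(\rn,d_A)$ is locally compact, geodesic, and locally bi-Lipschitz to the Euclidean metric (so that $d_A$-geodesics running inside a ball can be prolonged to its bounding sphere), are those of \cite[Lemma~5.6]{jn}, which we follow.
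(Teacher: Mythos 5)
Your opening three steps are sound: the pointwise cone estimate (with the annulus/shift trick to accommodate the interior vertex), the resulting monotonicity of $r\mapsto S_r^\pm u(x)$, the bound $a^\pm(x):=\lim_{r\to0}S_r^\pm u(x)\le\lip_{d_A}u(x)$, and the identification $\max\{a^+(x),a^-(x)\}=\lip_{d_A}u(x)$ via subsequences of fixed sign are all correct, as is your reading of the ``$-\lim S_r^-$'' in the statement as a sign-convention slip carried over from \cite{jn}. (The paper itself gives no proof at all here, only the citation to \cite[Lemma 5.6]{jn}, so there is no paper-internal argument to compare against.)

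The genuine gap is in the final step, $a^+(x)=a^-(x)$, and it is a gap of direction, not just of detail. You want a lower bound of the form $c\,d_A(p_r,q_r)\le u(p_r)-u(q_r)=r\bigl(S_r^+u(x)+S_r^-u(x)\bigr)$ with $c\to\lip_{d_A}u(x)$, together with $d_A(p_r,q_r)/r\to2$. Neither assertion is supported by what you have. The ``two pointwise cone estimates'' you invoke give \emph{upper} bounds on $u(p_r)-u(q_r)$ in terms of $d_A(p_r,q_r)$ (e.g.\ comparison from above with vertex $q_r$ yields $u(p_r)-u(q_r)\le S_{2r}^+u(q_r)\,d_A(p_r,q_r)$); they produce no lower bound of the kind you need, and the pointwise Lipschitz constant at $x$ does not control increments of $u$ between two \emph{other} nearby points from below. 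The asymptotic antipodality $d_A(p_r,q_r)/r\to2$ is likewise unproved, and it is simply false at a critical point of $u$: for Aronsson's $\infty$-harmonic $u(x_1,x_2)=x_1^{4/3}-x_2^{4/3}$ the extremal pair on $\partial B_r(0)$ sits at a right angle, so $d(p_r,q_r)/r=\sqrt2$. (The conclusion $a^+=a^-$ still holds there, trivially, because everything is $0$; but that only shows your lemma is false, not that your argument survives.) Establishing antipodality in general would itself require the linear blow-up theorem, which in this paper (Theorem \ref{t5.1}) is proved \emph{using} Lemma \ref{l4.52}, so the route is circular.

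The correct mechanism in \cite[Lemma 5.6]{jn} (and in \cite{ceg}) runs through upper bounds and a change of base point, not a lower bound at $x$. With $p_r,q_r$ the extremal points on $\partial B_{d_A}(x,r)$, comparison with cones from above with vertex $q_r$ and from below with vertex $p_r$ give
\[
r\bigl(S_r^+u(x)+S_r^-u(x)\bigr)=u(p_r)-u(q_r)\le\min\bigl\{S_{2r}^+u(q_r),\,S_{2r}^-u(p_r)\bigr\}\,d_A(p_r,q_r)\le 2r\,\min\bigl\{S_{2r}^+u(q_r),\,S_{2r}^-u(p_r)\bigr\}.
\]
Now fix $\rho>0$; for $2r<\rho$ monotonicity gives $S_{2r}^\pm u(\cdot)\le S_\rho^\pm u(\cdot)$, and $S_\rho^\pm u(\cdot)$ is continuous, so letting $r\to0$ (hence $p_r,q_r\to x$) and then $\rho\to0$ yields $a^+(x)+a^-(x)\le 2\min\{a^+(x),a^-(x)\}$, which forces $a^+(x)=a^-(x)$. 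Combined with your step $\max\{a^+,a^-\}=\lip_{d_A}u(x)$, this closes the argument. You should replace your sketch of the final step with this (or simply leave it as a citation, as the paper does, without the misleading heuristic).
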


\begin{proof}[Proof of Proposition \ref{c4.3}]
We prove Proposition \ref{c4.3}
on $U=[0,1]^n\supset S_{\bf a}$.
The proof is in two steps.

\noindent{\it Step 1:}
In this step we show that each absolute minimizer on $U$ that is of class $C^1$
must satisfy $\nabla u=0$ on $S_{\bf a}$. Suppose that $u$ is an absolute minimizer on $U$ that is
of class $C^1$.
Observe that $\lip_{d_A}u$ is upper semicontinuous on $U$, that is,
for any $x\in U$, we have
$\lip_{d_A}u(x)\ge\limsup_{z\to x} \lip_{d_A}u(z).$
To see this, recall that the upper semicontinuity  is equivalent to the property that for all $L>0$,
$E_L=\{x\in U: \lip_{d_A}u(x)<L \}$ is open.
Without loss of generality, we may assume that $E_L$ is not empty. Let
$x\in E_L$.
By Lemma~\ref{l4.52}, we know that for all $z\in U$,
\begin{equation}\label{e4.xx6}
  \lip_{d_A}u(z)=\inf_{0<r<d_A(z,\,\partial U)} S^+_ru(z)=\inf_{0<r<d_A(z,\,\partial U)}\frac{u^r(z)-u(z)}{r}.
\end{equation}
It is easy to see that  for each $r>0$, $u^r$
and hence $S^+_ru$ is continuous
on $U_r=\{y\in U: d_A(y,\,\partial U)>r\}$.
Since $x\in E_L$, applying \eqref{e4.xx6}, we have $S_r^+u(x)<L$ for some $0<r<d_A(x,\,\partial U)/2$.
By the continuity of $S_r^+u$ at $x\in U_r$, we know that
there exists $r>\dz>0$ such that $B_{d_A}(x,\,\dz)\subset U_r$ and
$S_r^+u(z)<L$ for
all $z\in B_{d_A}(x,\,\dz)$. Since $d_A(z,\,\partial U)>r$, by \eqref{e4.xx6} again,
we have $\lip_{d_A}u(z)\le S_r^+u(z)<L$. This means $E_L$ is open and hence
gives the upper semicontinuity of $\lip_{d_A}u$ as desired.

Applying Theorem \ref{t3.1}, we further obtain
that $\lip_{d_A}u(x)=\sqrt{H(x,\,\nabla u(x))}$ for almost all $x\in U$,
which yields that $x\mapsto H(x,\,\nabla u(x))$ is weak upper semicontinuous on $U$.
In particular,  for almost all $x\in K$, there exists a set $E$ (which may depend  on $x$) with measure zero such that
 $$ H(x,\,\nabla u(x))\ge\limsup_{y\in U\setminus E,\ y\to x} H(y,\,\nabla u(y))
 \ge \limsup_{y\in U\setminus (S_{\bf a}\cup E),\ y\to x} H(y,\,\nabla u(y)). $$
 {Note by the construction of $S_{\bf a}$ that whenever $x\in S_{\bf a}$ and 
 $r>0$, the Lebesgue measure of $B(x,r)\setminus S_{\bf a}$ is positive. Therefore 
 $x$ is a cluster point of $\mathbb{R}^n\setminus (S_{\bf a}\cup E)$, and hence the above limit supremum 
makes sense.}  
This, together with the continuity of $\nabla u$, implies that
\begin{eqnarray*}
 (1-\dz) |\nabla u(x)|^2\ge
\limsup_{y\in U\setminus (S_{\bf a}\cup E),\ y\to x}|\nabla u(y)|^2=|\nabla u(x)|^2.
 \end{eqnarray*}
 This is a contradiction if $\nabla u(x)\ne 0$ and $\dz>0$.  Hence it follows that
 each absolute minimizer $u$ on $U$ must satisfy $\nabla u=0$ on $S_{\bf a}$ if $\nabla u$ is continuous on $U$.

\noindent{{\it Step 2:}}
Now we show the existence of an absolute minimizer $u$ on $U$
 which is either not of class $C^1$, or else
satisfies $\nabla u\ne0$ on some set $K$
of $S_{\bf a}$ with positive measure. Consider the absolute minimizer $u$ on $U$ with boundary data $f(x)=x_1$.
Assume that  $u\in C^1(U)$.
Due to the continuity of $\nabla u $, it suffices to show that
$\nabla u(x)\ne 0 $ for some $x\in S_{\bf a}$. We prove this by contradiction.
Assume that $\nabla u(x)= 0 $ for all $x\in S_{\bf a}$.
Then for any $\ez>0$, there exists $0<\ez'<\ez$ such that for all
$x\in U_\ez=\{y\in [\ez,\,1-\ez]\times[0,\,1]^{n-1}: \dist_\rn(y,\, S_{\bf a})<\ez'\}$ we have
$|\nabla u(x)|\le\ez$.
Fix $x'\in\mathbb{R}$ such that $|x'|<\ez'$, and
choose $x,\,y\in \partial U$ such that $x=(0,\,x')$, $y=(1,\,x')$
and let $\gz$ be the line segment joining $x,\,y$.
>From the construction of $S_{\bf a}$,  we see that
$\gz\cap[\ez,\,1-\ez]\times[0,\,1]^{n-1}\subset U_\ez$ when $\ez'$ is small enough and hence,
$$\int_\ez^{1-\ez}|\nabla u((\gz(t)))|\,dt\le \ez (1-2\ez).$$
Moreover, since
\begin{eqnarray*}
|\nabla u(z)|&&=\lip\, u(z)\le \frac1{\sqrt{1-\dz}} \lip_{d_{S_{\bf a},\,\dz}}(u,\,U)\\&&\le
\frac1{\sqrt{1-\dz}}\lip_{d_{S_{\bf a},\,\dz}}(f,\,\partial U)\le \frac1{ 1-\dz}\lip (f,\,\partial U)=\frac1{ 1-\dz}
\end{eqnarray*}
for all $z\in U$,
we have
$$\lf(\int_0^\ez+\int_{1-\ez}^1\r)  |\nabla u((\gz(t)))|\,dt\le 2\ez\frac1{ 1-\dz}. $$
Thus
$$1=|u(x)-u(y)|=\lf |\int_\gz (u\circ\gz)'(t)\,dt\r|\le \int_0^1|\nabla u((\gz(t)))|\,dt\le
2\ez\frac1{ 1-\dz}+\ez (1-2\ez).$$
Taking $\ez$ small enough, the term $2\ez\frac1{ 1-\dz}+\ez (1-2\ez)<1$,
which is a contradiction.
So the assumption is not true and $\nabla u(x)\ne0$ for some $x\in S_{\bf a}$,
 which contradicts Step~1 above. Therefore $u$ is not of class $C^1$ on $U$.
This proves Proposition~\ref{c4.3}.
\end{proof}

Finally, for later use, we list some more characterizations of absolute minimizers.

\begin{lem}\label{l4.53}
The following conditions on $u$ are equivalent:

\noindent(i)  $u$ is an absolute  minimizer on $U$.

\noindent(ii)  $u$ satisfies the property of comparison with cones.

\noindent(iii)  for all open sets $V\Subset U$,
$\lip_{d_A}(u,\,V)=\lip_{d_A}(u,\,\partial V)$.
\end{lem}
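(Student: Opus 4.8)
The plan is to establish the cycle $(i)\Rightarrow(iii)\Rightarrow(ii)\Rightarrow(i)$; note that $(i)\Rightarrow(ii)$ is in any case already the content of Lemma~\ref{l4.51}. The implication $(i)\Rightarrow(iii)$ is exactly Lemma~\ref{l4.50}, so nothing new is needed there.

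For $(iii)\Rightarrow(ii)$ I would follow the proof of Lemma~\ref{l4.51}, observing that the only use of absolute minimality in that argument is through the identity $\lip_{d_A}(u,W)=\lip_{d_A}(u,\partial W)$ for the contact set $W$, which hypothesis $(iii)$ supplies directly. Concretely, to verify the first part of the comparison‑with‑cones property, suppose $a\ge0$, $b\in\rr$, $x_0\in\rn\setminus V$ with $V\Subset U$ and $\max_{\partial V}[u-C_{b,a,x_0}]\le0$, but that $W:=\{x\in V:\ u(x)>C_{b,a,x_0}(x)\}\ne\emptyset$. Then $W\Subset U$ and $u=C_{b,a,x_0}$ on $\partial W$, so $\lip_{d_A}(u,\partial W)=\lip_{d_A}(C_{b,a,x_0},\partial W)\le a$, whence $\lip_{d_A}(u,W)\le a$ by $(iii)$. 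On the other hand, for $x\in W$ let $\gz$ be a $d_A$‑geodesic from $x$ to $x_0$ (which exists since $(\rn,d_A)$ is geodesic) and let $z$ be the first point of $\gz$ lying on $\partial W$; since $z$ is on $\gz$ we have $d_A(x,x_0)=d_A(x,z)+d_A(z,x_0)$, hence $u(x)-u(z)>C_{b,a,x_0}(x)-C_{b,a,x_0}(z)=a\,d_A(x,z)$, and approximating $z$ by points of $W$ together with continuity of $u$ and $d_A$ gives $\lip_{d_A}(u,W)>a$, a contradiction. The second part of the comparison‑with‑cones property follows by applying this to $-u$, which also satisfies $(iii)$ because $\lip_{d_A}(-u,\cdot)=\lip_{d_A}(u,\cdot)$.

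For $(ii)\Rightarrow(i)$ I would fix an arbitrary open set $W$ with $\overline W\subset U$ and let $h$ be an absolutely minimizing Lipschitz extension on $W$ of $u|_{\partial W}$, which exists by Lemma~\ref{l4.8}. Then $h$ enjoys comparison with cones on $W$ by Lemma~\ref{l4.51}, and $u$ does so on $W$ by hypothesis $(ii)$. Inspecting the proof of Lemma~\ref{l4.9}, one sees it uses only the comparison‑with‑cones property of the two functions (through~\eqref{e4.x5}) and their continuity; applying it to the pair $u,h$ on $W$ and using $u=h$ on $\partial W$ yields $u\equiv h$ on $\overline W$, so $u$ is an absolute minimizer on $W$. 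Since every $V\Subset U$ is contained in some open $W$ with $V\Subset W$ and $\overline W\subset U$, and the definition of absolute minimizer on $U$ only tests competitors supported on such $V$, it follows that $u$ is an absolute minimizer on $U$, i.e. $(i)$ holds.

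The step needing the most care is $(ii)\Rightarrow(i)$: one must check that the ingredients invoked in Lemmas~\ref{l4.9} and~\ref{l4.51} --- the maximum principle, the increasing‑slope estimate (obtained by applying comparison with cones, with the cone $C_{u(x_0),\,S^+_R u(x_0),\,x_0}$, on the punctured ball $B_{d_A}(x_0,R)\setminus\{x_0\}$, after noting that the maximum of $u$ over $\overline{B_{d_A}(x_0,R)}$ is attained on $\partial B_{d_A}(x_0,R)$), and the monotone formula $\lip_{d_A}u(x)=\lim_{r\to0}S^+_r u(x)$ of \cite[Lemma~5.6]{jn} --- are valid for an arbitrary function with comparison with cones and not merely for one already known to be an absolute minimizer, and that the reduction ``absolute minimizer on every $W\Subset U$'' $\Rightarrow$ ``absolute minimizer on $U$'' is legitimate. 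These facts are standard (see \cite{ceg} for the Euclidean theory and \cite{jn} for the length‑space setting), but they are the substantive part of the argument.
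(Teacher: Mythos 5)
Your proof is correct, and it follows the cycle $(i)\Rightarrow(iii)\Rightarrow(ii)\Rightarrow(i)$ rather than the paper's statement-by-statement argument; the substantive difference lies entirely in the implication $(ii)\Rightarrow(i)$. The paper disposes of this implication by citing \cite[Proposition~5.8]{jn}, with Lemma~\ref{l4.6} supplying a substitute for the weak Fubini hypothesis used there. You instead give a self-contained existence-plus-uniqueness argument: on each $W\Subset U$ you produce an absolutely minimizing Lipschitz extension $h$ of $u|_{\partial W}$ by Lemma~\ref{l4.8}, and then use the comparison estimate of Lemma~\ref{l4.9} to force $u\equiv h$ on $\overline W$. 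The crucial point you rely on --- that the proof of Lemma~\ref{l4.9} only needs the two functions to be continuous and to satisfy comparison with cones, not to be known absolute minimizers --- is indeed correct: absolute minimality enters that proof only through inequality~\eqref{e4.x5}, and~\eqref{e4.x5} is derived by applying the comparison-with-cones property on a punctured $d_A$-ball $B_{d_A}(x,2r)\setminus\{x\}$ with vertex $x$; the remainder of the proof is topological bookkeeping. Your closing reduction (every $V\Subset U$ sits in some $W$ with $V\Subset W\Subset U$, so absolute minimality on every such $W$ gives absolute minimality on $U$) is also fine. The trade is that you avoid invoking \cite[Proposition~5.8]{jn} but rely instead on the existence theorem from \cite{jn} via Lemma~\ref{l4.8}, so the net dependence on \cite{jn} is comparable. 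Your $(i)\Rightarrow(iii)$ (Lemma~\ref{l4.50}) and $(iii)\Rightarrow(ii)$ (replay of Lemma~\ref{l4.51} with hypothesis $(iii)$ replacing the appeal to Lemma~\ref{l4.50}) match the paper; as a small improvement, you correctly work with the inequality $\lip_{d_A}(u,\partial W)\le a$ rather than the paper's slightly overclaimed equality $\lip_{d_A}(u,\partial W)=a$, which is all the contradiction actually needs.
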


\begin{proof}
From Lemmas \ref{l4.50} and \ref{l4.51},
it follows that  (i)$\Rightarrow$(ii), (iii).  To obtain (ii)$\Rightarrow$(i),
we only need to notice that, with the help of Lemma \ref{l4.6},
 the argument provided by the proof of \cite[Proposition 5.8]{jn} still works
 here, without the additional weak Fubini property required in \cite{jn};
see also \cite{acj}.
  The proof of (iii)$\Rightarrow$(ii) follows directly from the proof of
Lemma~\ref{l4.51}.
\end{proof}

\section{Linear approximation when $A$ is continuous on $\boz$}\label{s5}

In this section we only consider $n\ge2$.

\begin{thm}\label{t5.1}
Let $A\in\mathscr A(\boz)$, $U\subset\boz$ and $u$ be an  absolute minimizer on $U$.
If $A$ is continuous at $x\in U$, then for every sequence $\{r_j\}_{j\in\nn}$ that converges to $0$,
there exists a subsequence ${\bf r}=\{r_{j_k}\}_{k\in\nn}$ and a vector ${\bf e}_{x,\,\bf r}$ such that
\begin{equation}\label{e2.x5}
\lim_{k\to\fz}\lf|
\frac{u(x+r_{j_k}y)-u(x)}{r_{j_k}}-\langle {\bf e}_{x,\,\bf r},\, y\rangle\r|=0
\end{equation}
and $H(x,\,{\bf e}_{x,\,\bf r})= \lip_{d_A}  u(x)$.
 Consequently, if $A $ is continuous on $U$, then \eqref{e2.x5} holds for  all $x\in U$.
\end{thm}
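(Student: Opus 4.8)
The plan is to blow up $u$ at $x$ in the spirit of \cite{ceg}, the new twist being that the background metric itself changes during the blow-up. For $r>0$ small enough that $x+r\,\overline{B(0,R)}\subset U$, put
\[
u_r(y)=\frac{u(x+ry)-u(x)}{r},\qquad A_r(y)=A(x+ry).
\]
The matrix $A_r$ is elliptic with the same constant $\lambda$, and rescaling the relevant definitions shows that $d_{A_r}(y,z)=\tfrac1r\,d_A(x+ry,x+rz)$ and that $u_r$ is an absolute minimizer for $A_r$ on $\{y:x+ry\in U\}$ (the competitor classes and the functionals transform consistently under this affine rescaling). Hence, by Lemma~\ref{l4.53}, $u_r$ satisfies the property of comparison with cones built from $d_{A_r}$.

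Since $u\in\lip(U)$, the family $\{u_r\}$ is equi-Lipschitz (for the Euclidean metric) on each fixed ball $B(0,R)$ and $u_r(0)=0$; Arzel\`a--Ascoli together with a diagonal argument over $R=1,2,\dots$ produces a subsequence ${\bf r}=\{r_{j_k}\}$ with $u_{r_{j_k}}\to v$ locally uniformly on $\rn$ and $v(0)=0$. As $A$ is continuous at $x$, $A_r\to A(x)$ uniformly on compact sets, so on each $B(0,R)$ one has $(1+\ez(r))^{-1}A(x)\le A_r\le(1+\ez(r))A(x)$ in the sense of quadratic forms, with $\ez(r)\to0$; consequently $d_{A_r}\to d_{A(x)}$ locally uniformly, where $d_{A(x)}(y,z)=|A(x)^{-1/2}(y-z)|$ is the constant-coefficient intrinsic distance of $A(x)$. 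Passing to the limit in the cone comparison inequalities for $u_{r_{j_k}}$ --- using uniform convergence of $u_{r_{j_k}}$ and of the cone functions $b+a\,d_{A_{r_{j_k}}}(\cdot,y_0)$, and perturbing $b$ by $\eta\downarrow0$ to reduce to strict inequalities --- shows that $v$ satisfies comparison with cones built from $d_{A(x)}$, i.e.\ (Lemma~\ref{l4.53} again) $v$ is an absolute minimizer on $\rn$ for the constant matrix $A(x)$.

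Set $L_0=\lip_{d_A}u(x)$. Because $u$ is an absolute minimizer, $\lip_{d_A}u$ is upper semicontinuous on $U$ (as shown in the proof of Proposition~\ref{c4.3}); combined with the local bi-Lipschitz equivalence of $d_A$ and the Euclidean metric and with the fact that $(\rn,d_A)$ is a geodesic space, this forces $|Du|_{d_A}(x)=L_0$, and hence $\lip_{d_{A(x)}}(v,K)\le L_0$ for every ball $K$. On the other hand, writing $S_r^+u(x)=\sup_{d_{A_r}(0,\cdot)\le1}u_r$ and $S_r^-u(x)=-\inf_{d_{A_r}(0,\cdot)\le1}u_r$, Lemma~\ref{l4.52} gives $S^\pm_{r_{j_k}}u(x)\to L_0$, and letting $k\to\infty$ (the $d_{A_{r_{j_k}}}$-unit balls converge to the $d_{A(x)}$-unit ball) we obtain $\sup_{d_{A(x)}(0,\cdot)\le1}v=L_0=-\inf_{d_{A(x)}(0,\cdot)\le1}v$. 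After the linear change of variables $q\mapsto A(x)^{1/2}q$ the metric $d_{A(x)}$ becomes Euclidean and $\widetilde v:=v\circ A(x)^{1/2}$ becomes an $L_0$-Lipschitz absolute minimizer for the Euclidean Hamiltonian on $\rn$ which vanishes at the origin and attains $\pm L_0$ on the unit sphere. The blow-up argument of \cite{ceg}, which uses only comparison with cones and the resulting monotonicity of the operators $S^\pm_r$ (cf.\ Lemma~\ref{l4.52}), then forces $\widetilde v$ to be linear: $\widetilde v(q)=L_0\langle{\bf b},q\rangle$ with $|{\bf b}|=1$. Unravelling the change of variables, $v(y)=\langle{\bf e}_{x,\,{\bf r}},\,y\rangle$ with ${\bf e}_{x,\,{\bf r}}=L_0\,A(x)^{-1/2}{\bf b}$, so that $\sqrt{H(x,\,{\bf e}_{x,\,{\bf r}})}=|A(x)^{1/2}{\bf e}_{x,\,{\bf r}}|=L_0=\lip_{d_A}u(x)$. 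The local uniform convergence $u_{r_{j_k}}\to v$ is precisely \eqref{e2.x5}, and the last statement of the theorem is immediate since continuity of $A$ on $U$ means $A$ is continuous at every point of $U$.

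I expect the main obstacle to be controlling the moving metric throughout the blow-up: one must verify that both the comparison-with-cones property and the monotonicity and tightness data carried by $S^\pm_r$ survive the passage to the limit along $d_{A_{r_{j_k}}}\to d_{A(x)}$, and that the identity $|Du|_{d_A}(x)=\lip_{d_A}u(x)$ --- special to absolute minimizers and obtained from the upper semicontinuity of $\lip_{d_A}u$ --- supplies the sharp Lipschitz constant $L_0$ for the blow-up limit in the absence of one fixed ambient metric. Once those points are settled, the concluding linearity step is the Euclidean argument of \cite{ceg} applied essentially verbatim after the affine change of variables.
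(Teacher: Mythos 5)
Your proposal follows essentially the same blow-up strategy as the paper: rescale $u$ and $A$, show the rescaled function is an absolute minimizer for the rescaled Hamiltonian, pass to a locally uniform subsequential limit $v$, show $v$ inherits comparison with cones for the constant Hamiltonian $H(x,\cdot)$ (Lemma~\ref{l5.2}), pin down the Lipschitz data of $v$, and invoke a rigidity theorem to conclude linearity. The two small differences from the paper's route are worth noting. First, you reduce the final rigidity step to the Euclidean Hamiltonian via the linear change of variables $q\mapsto A(x)^{1/2}q$ and cite only~\cite{ceg}; the paper instead keeps the anisotropic constant Hamiltonian $H_\fz$ and cites \cite[Lemma~3.4]{wy} for that case. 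Both are valid — your reduction is arguably cleaner and shows that the extra generality of~\cite{wy} is not actually needed here.

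Second, and this is where you should be more careful: you establish $S^+_1 v(0)=L_0=-S^-_1 v(0)$ by passing to the limit in $S^\pm_{r_{j_k}}u(x)$, but the rigidity argument in~\cite{ceg} needs $S^\pm_\rho v(0)=L_0$ for \emph{all} $\rho>0$, not just $\rho=1$. Monotonicity of $S^+_\rho$ in $\rho$ (Lemma~\ref{l4.52}) together with $\lip(\widetilde v,\rn)=L_0$ only gives $S^+_\rho v(0)\le L_0$ for $\rho<1$, so the $\rho=1$ equality does not propagate down by itself; one cannot conclude $\lip v(0)=\lim_{\rho\to0}S^+_\rho v(0)=L_0$ without further argument. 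The fix is exactly the scaling you already used: for any fixed $\rho>0$, since $\rho r_{j_k}\to0$, Lemma~\ref{l4.52} gives $S^+_{\rho r_{j_k}}u(0)\to L_0$, while the change of variables $z=r_{j_k}y$ identifies $S^+_{\rho r_{j_k}}u(0)=\rho^{-1}\sup_{d_{A_{r_{j_k}}}(0,y)\le\rho}u_{r_{j_k}}(y)$, which converges to $S^+_\rho v(0)$ because $u_{r_{j_k}}\to v$ and $d_{A_{r_{j_k}}}\to d_{A(x)}$ locally uniformly. This yields $S^+_\rho v(0)=L_0$ for every $\rho>0$, which is precisely the content of the paper's Lemma~\ref{l5.3}(i) and the nontrivial inequality~\eqref{e5.xx9}. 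With that made explicit, your argument is complete and correct.
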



To prove Theorem \ref{t5.1}, we   need the following auxiliary lemmas.
We first {  look} at the case
 $x=0\in U$,  $u(0)=0$ and ${ \lip_{d_A}} u(0)\ne0$.
For any $r_0\in(0,\,d_A(0,\,\partial U))$, we know that $u$ is  an absolute minimizer on $B(0,\,r_0)\Subset U$.
 Moreover, $\nabla u\in L^\fz(B(0,\,r_0))$ and  the ellipticity function $\lz$ of~\eqref{e1.1} is
 bounded on $B(0,\,r_0)$.
 In what follows, we fix such a radius $r_0$, and without loss of generality,
  we write $U=B(0,\,r_0)$ and assume that $r_{j+1}<r_j<r_0$ for all $j$.

For each $j\in\nn$  we scale the absolute minimizer $u$ by setting $$u_j(y)=\frac{u(r_j y)}{r_j} $$
for all $y\in \frac1{r_j} U=\{\frac1{r_j}x,\,x\in U\}$.
For each $j\in\nn$, points $x\in\frac1{r_j}U$, and $\xi\in\rn$, set
{ $A_j(x)=A(r_j x)$,}
 and also set
 { $A_\infty(x)=A(0)$.
 Furthermore, for vectors $\xi\in S^{n-1}$, set}
 $H_\fz(\xi)=\langle A(0)\xi,\,\xi\rangle$.
Denote by $d_j$ the intrinsic distance of { $A_j $} for $j\in\nn\cup\{\fz\}$.

\begin{lem}
There exists $u_\fz\in W^{1,\,\fz}(\rn)$ and
 a subsequence  $\{r_{j_k}\}_{k\in\nn}$ of $\{r_j\}_{j\in\nn}$
 such that  $ u_{j_k}$  converges to
 $u_\fz $ locally uniformly   and  in weak $W^{1,\,\fz}(\rn)$.
\end{lem}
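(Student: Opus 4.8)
The plan is to derive uniform Lipschitz bounds for the rescaled functions $u_j$ and then extract the limit by Arzel\`a--Ascoli together with a diagonal argument over an exhaustion of $\rn$.

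First I would record the basic a priori bound. Since $u$ is an absolute minimizer on $U=B(0,\,r_0)$, it lies in $\lip_{d_A}(U)$, and the ellipticity~\eqref{e1.1} (equivalently, Lemma~\ref{comp-dist}) makes $d_A$ comparable to the Euclidean metric on $U$, so $u$ is Euclidean-Lipschitz there; put $M:=\|\nabla u\|_{L^\fz(B(0,\,r_0))}<\fz$. Because $\nabla u_j(y)=\nabla u(r_jy)$, we get $\|\nabla u_j\|_{L^\fz(\frac1{r_j}U)}\le M$ for every $j$, and since $u_j(0)=u(0)/r_j=0$ this yields $|u_j(y)|\le M|y|$ on $\frac1{r_j}U$. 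Thus the family $\{u_j\}$ is, on every fixed compact subset of $\rn$, uniformly bounded and uniformly $M$-Lipschitz once $j$ is large enough for that compact set to be contained in $\frac1{r_j}U$ (which happens eventually, as $r_j\to0$).

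Next I would run Arzel\`a--Ascoli on the exhaustion $\rn=\bigcup_{m\in\nn}\overline{B(0,\,m)}$: for each $m$ the restrictions $u_j|_{\overline{B(0,\,m)}}$ (for $j$ large) are precompact in $C(\overline{B(0,\,m)})$, and a diagonal argument over $m$ produces a subsequence $\{r_{j_k}\}_{k\in\nn}$ and a function $u_\fz$ with $u_{j_k}\to u_\fz$ locally uniformly on $\rn$. Passing to the limit in $|u_{j_k}(y)-u_{j_k}(z)|\le M|y-z|$ shows $u_\fz$ is $M$-Lipschitz, so $\nabla u_\fz\in L^\fz(\rn)$ and $u_\fz\in W^{1,\fz}(\rn)$. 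For the weak $W^{1,\fz}$-convergence I would note that $\{\nabla u_{j_k}\}$ is bounded in $L^\fz$; using separability of $L^1$ on compacta I pass to a further weak-$*$ convergent subsequence and identify its limit with $\nabla u_\fz$ by testing against $\varphi\in C_c^\fz(\rn)$ and using the local-uniform (hence $L^1_\loc$) convergence of $u_{j_k}$ together with integration by parts. Since the weak-$*$ limit point is unique, the whole subsequence converges weak-$*$, which combined with the local uniform convergence is precisely convergence of $u_{j_k}$ to $u_\fz$ in the weak topology of $W^{1,\fz}(\rn)$.

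The argument is soft, so there is no serious obstacle; the two points needing care are that the domains $\frac1{r_j}U$ depend on $j$ and only exhaust $\rn$ in the limit (so every estimate must be phrased locally and used only for large $j$), and that $L^\fz$ is not sequentially weak-$*$ compact in general, so the extraction of a weak-$*$ convergent subsequence of the gradients must be justified via the separability of $L^1$ on compact sets (or, equivalently, by observing that the bounded sequence has a unique weak-$*$ limit point, pinned down by the distributional identity above, whence the full subsequence converges).
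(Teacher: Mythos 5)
Your proof is correct and takes essentially the same route as the paper: uniform Lipschitz bounds on the rescaled family, Arzel\`a--Ascoli for local uniform convergence, and weak-$*$ compactness of the gradients in $L^\fz$. The only technical difference is that the paper sidesteps your diagonal argument over an exhaustion by first performing a McShane extension $\wz u$ of $u$ from $B(0,r_0)$ to all of $\rn$ and then rescaling $\wz u$, so that all rescalings $\wz u_j$ are globally defined with a single Lipschitz constant and coincide with $u_j$ on $\frac1{r_j}U$; both devices handle the growing domains equally well.
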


\begin{proof}
Since $\nabla u\in L^\fz(U)$ and $U=B(0,\,r_0)$ is convex, we know that
$\lip (u,\,U)\le\|\nabla u\|_{L^\fz(U)}$. Extend $u$ to $\rn$ by the McShane extension, that is, set
$$\wz u(x)=\sup_{z\in U}\{u(z)+ \lip (u,\,U)|x-z|\} $$
for all $x\in\rn$. Moreover, for each $j\in\nn$ and all $x\in\rn$, let $\wz u_j(x)=\frac{\wz u(r_jx)}{r_j}$
{ is such a McShane extension from $B(0,r_0/r_j)$ to $\mathbb{R}^n$.}

On $\rn$ we have
$\nabla \wz u_j(y)=(\nabla \wz u)(r_j y)$  and $\nabla \wz u\in L^\fz(\rn)$,
{ so it follows that} $ \wz u_j \in W^{1,\,\fz}(\rn)$ with
$\|\nabla\wz u_j \|_{L^\fz(\rn)} =\|\nabla\wz u  \|_{L^\fz(\rn)}=\|\nabla u\|_{L^\infty(B(0,r_0))}<\fz$.
Therefore, { by the Arzela-Ascoli theorem,}
there exists a subsequence $\{j_k\}_{k\in\nn}$ of $\nn$ and $u_\fz\in W^{1,\,\fz}(\rn)$
such that $ \wz u_{j_k}$ converges to $u_\fz$ locally uniformly
and {in weak $W^{1,\,\fz}(\rn)$}
 { This means that for each $(n+1)$-tuple of compactly supported continuous functions
$(\phi_0,\phi_1,\cdots,\phi_n)$, we have
\[
   \lim_k \int_{\mathbb{R}^n}\left[\wz u_{j_k}(x)\phi_0(x)+\sum_{i=1}^n\phi_i(x)\partial_i\wz u_{j_k}(x)\right]\, dx
      =\int_{\mathbb{R}^n}\left[u_\fz(x)\phi_0(x)+\sum_{i=1}^n\phi_i(x)\partial_iu_\fz(x)\right]\, dx.
\]
This weak convergence (strictly, to be called weak-* convergence), follows from the Banach-Alaouglu theorem
upon noting that $W^{1,\infty}(\mathbb{R}^n)$ is a subset of the dual of the Banach space
$(L^1(\mathbb{R}^n))^n$, together with Mazur's lemma.}

Notice that $u_j(x)=\wz u_j(x)$ whenever $x\in \frac1{r_j}U=B(0,r_0/r_j)$ for all $j\in \nn$.
Given a compact set $K$, there exists a constant $j_K$ such that   
for all  $j\ge j_K$, $K\subset\frac1{r_j}U$. Therefore $\wz u_j$ converges to $u_\fz$
on $K$ uniformly  implies that $ u_j$ converges to $u_\fz$ uniformly  as $j_K\le j\to\fz$.
\end{proof}

In what follows, for simplicity, we always write the subsequence $\{{j_k}\}_{k\in\nn}$ of
$\nn$ obtained in above Lemma 5.2 as $\nn$ by abuse of notation.

\begin{lem}\label{l5.2}
{\rm(}i{\rm)} For all $j\in\nn$, $u_j$ is an absolute minimizer on $\frac1{r_j}U$
 associated to
the Hamiltonian $H_j$ which corresponds to $A_j$.

{\rm(}ii{\rm)} If $A$ is  continuous at $0$,  then $u_\fz$ is an absolute minimizer on $\rn$
  associated to
the Hamiltonian $H_\fz$.
\end{lem}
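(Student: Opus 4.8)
The plan is to deduce both statements from the characterization of absolute minimizers by the identity $\lip_{d_A}(u,V)=\lip_{d_A}(u,\partial V)$ for all open $V\Subset U$ (Lemma~\ref{l4.53}), and, for the limit function, also from the equivalent property of comparison with cones.

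\textbf{Part (i).} First I would record the scaling identity $d_{A_j}(x,y)=r_j^{-1}\,d_A(r_jx,r_jy)$ for $x,y\in\rn$. It is immediate from the definition of the intrinsic distance: if $w$ is admissible for $d_A$, then $w_j(x):=r_j^{-1}w(r_jx)$ is admissible for $d_{A_j}$, since $\nabla w_j(x)=(\nabla w)(r_jx)$ and hence $H_j(x,\nabla w_j(x))=H(r_jx,(\nabla w)(r_jx))\le1$ a.e.; the converse passage is the same. Since $u_j(x)=r_j^{-1}u(r_jx)$, it follows that $\lip_{d_{A_j}}(u_j,K)=\lip_{d_A}(u,r_jK)$ for every $K\subset\frac1{r_j}U$. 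Now for any open $V\Subset\frac1{r_j}U$ we have $r_jV\Subset U$, so, since $u$ is an absolute minimizer on $U$, Lemma~\ref{l4.53} gives $\lip_{d_A}(u,r_jV)=\lip_{d_A}(u,\partial(r_jV))=\lip_{d_A}(u,r_j\partial V)$, which by the displayed identity is $\lip_{d_{A_j}}(u_j,V)=\lip_{d_{A_j}}(u_j,\partial V)$. Applying Lemma~\ref{l4.53} in the reverse direction, now with the matrix $A_j$ on the domain $\frac1{r_j}U$, shows that $u_j$ is an absolute minimizer there for $H_j$.

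\textbf{Part (ii).} Assume $A$ is continuous at $0$. Since $A_j(x)=A(r_jx)$ and $r_j\to0$, the matrices $A_j$ converge to $A(0)$ uniformly on compact subsets of $\rn$, with uniformly bounded ellipticity functions on compacts. From this I would first deduce that $d_j\to d_\fz$ locally uniformly on $\rn\times\rn$: all the $d_j$ and $d_\fz$ are comparable to the Euclidean distance with uniform constants on a fixed large ball $B$, so the geodesics joining two points of a given compact set lie in $B$ for $j$ large, and on $B$ one has $\frac1{1+\ez}A(0)\le A_j\le(1+\ez)A(0)$ for $j$ large, which sandwiches $d_j$ between $(1+\ez)^{\pm1/2}$-multiples of $d_\fz$ there. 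In particular, for each cone function the cones built from $A_j$ converge locally uniformly to the corresponding cone built from $A(0)$.

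Next I would verify that $u_\fz$ has the property of comparison with cones with respect to $d_\fz$ on $\rn$, and then invoke Lemma~\ref{l4.53}. Fix a bounded open $V$, constants $a\ge0$ and $b\in\rr$, and $x_0\in\rn\setminus V$, and suppose $\max_{\partial V}(u_\fz-C^\fz_{b,a,x_0})\le0$, where the superscript marks the matrix used to build the cone. For $\ez>0$, the fattened cone $C^\fz_{b+\ez,a+\ez,x_0}$ exceeds $u_\fz$ by at least $\ez$ on the compact set $\partial V$; since $u_j\to u_\fz$ and $C^j_{b+\ez,a+\ez,x_0}\to C^\fz_{b+\ez,a+\ez,x_0}$ uniformly on $\partial V$, we get $\max_{\partial V}(u_j-C^j_{b+\ez,a+\ez,x_0})\le0$ for all large $j$. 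Taking $j$ also large enough that $V\Subset\frac1{r_j}U$ and using part (i) with Lemma~\ref{l4.53} (so that $u_j$ enjoys comparison with cones for $d_j$ on $\frac1{r_j}U$), we obtain $\max_{\overline V}(u_j-C^j_{b+\ez,a+\ez,x_0})\le0$. Letting $j\to\fz$ and then $\ez\to0$ gives $\max_{\overline V}(u_\fz-C^\fz_{b,a,x_0})\le0$; the lower-cone condition is treated in the same way, perturbing by $-\ez$. Hence $u_\fz$ has the property of comparison with cones with respect to $d_\fz$, so by Lemma~\ref{l4.53} it is an absolute minimizer for $H_\fz$ on every bounded open subset of $\rn$, that is, on $\rn$.

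\textbf{Main obstacle.} The delicate point is exactly the change of metric along the blow-up. One cannot simply pass to the limit in the identities $\lip_{d_j}(u_j,V)=\lip_{d_j}(u_j,\partial V)$, because the difference quotients defining these pointwise Lipschitz constants may concentrate near the diagonal, where the convergence $d_j\to d_\fz$ is not strong enough to control them. Using the comparison-with-cones formulation and fattening the cones by $\ez$ circumvents this: the fattening produces a strict inequality on $\partial V$ that is stable under the merely locally uniform convergence of $u_j$ and of the cone functions, and that persists in the limit.
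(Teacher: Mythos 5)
Your proof is correct and follows essentially the same route as the paper's. Part~(i) is identical: the scaling identity for the intrinsic distance (the paper's Lemma~\ref{l2.x2}(i)) converts the problem back to $u$ on $r_jV$, and Lemma~\ref{l4.53} is applied twice. Part~(ii) also mirrors the paper: both verify comparison with cones for $u_\fz$ by passing to a slightly fattened cone, using the locally uniform convergence $u_j\to u_\fz$ together with the locally uniform comparability of $d_j$ and $d_\fz$ (the paper's Lemma~\ref{l2.x2}(ii)). The only cosmetic difference is that you perturb the cone additively (replacing $a,b$ by $a+\ez,b+\ez$) while the paper perturbs the slope multiplicatively (replacing $a$ by $a/(1-\ez)$); both produce the strict slack on $\partial V$ that survives the limit, which, as you correctly note, is the key to circumventing the lack of control on difference quotients near the diagonal.
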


To prove this, we need two facts given in the following; the second one relies on the  continuity of $A$ at $0$.
 We postpone the proof of Lemma~\ref{l2.x2} until after the proof of Lemma~\ref{l5.2}.

\begin{lem}\label{l2.x2}

{\rm(}i{\rm)} For $j\in\nn$ and $x,\,y\in\rn$,  $r_j d_j(x,\,y)= {d_A}(r_j x,\,r_j y)$.

{\rm(}ii{\rm)} Assume that $A$ is  continuous at $0$.
 Given a compact set $K$ and $x\in\rn$, for every $\ez>0$,  there exists $j(x,\,\ez,\,K)\in\nn$ such that
for all $j\ge j(x,\,\ez,\,K)$ and all   $y\in K$,
   $$(1-\ez)d_\fz(x,\,y)\le {d_j(x,\,y)} \le(1+\ez )d_\fz(x,\,y).$$
\end{lem}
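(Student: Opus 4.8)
For part (i), the plan is simply a change of variables in the definition of the intrinsic distance, based on the identity $A_j(z/r_j)=A(z)$ (recall $A_j(w)=A(r_jw)$). Recall that $d_A(p,q)=\sup\{w(p)-w(q)\}$ over all $w\in C(\rn)\cap W^{1,2}_\loc(\rn)$ with $\langle A(z)\nabla w(z),\nabla w(z)\rangle\le1$ for almost every $z$, and that $d_j$ is defined the same way with $A_j$ in place of $A$. If $w$ is admissible for $d_A$, then $v(z):=r_j^{-1}w(r_jz)$ has $\nabla v(z)=(\nabla w)(r_jz)$, hence $\langle A_j(z)\nabla v(z),\nabla v(z)\rangle=\langle A(r_jz)(\nabla w)(r_jz),(\nabla w)(r_jz)\rangle\le1$ a.e.; thus $v$ is admissible for $d_j$ and $v(x)-v(y)=r_j^{-1}[w(r_jx)-w(r_jy)]$, so taking suprema gives $r_jd_j(x,y)\ge d_A(r_jx,r_jy)$. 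Conversely, if $v$ is admissible for $d_j$ then $w(z):=r_jv(z/r_j)$ satisfies $\langle A(z)\nabla w(z),\nabla w(z)\rangle=\langle A_j(z/r_j)(\nabla v)(z/r_j),(\nabla v)(z/r_j)\rangle\le1$ a.e., which yields the reverse inequality. Hence $r_jd_j(x,y)=d_A(r_jx,r_jy)$.

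For part (ii), the idea is to use the continuity of $A$ at $0$ to replace, on a fixed ball about the origin and for $j$ large, the matrix $A_j$ by the constant matrix $A(0)=A_\fz$, whose intrinsic distance is explicit: $d_\fz(p,q)=|A(0)^{-1/2}(p-q)|$, with $\nabla(d_\fz(p,\cdot))(z)=A(0)^{-1}(z-p)/|A(0)^{-1/2}(z-p)|$ and $\langle A(0)\nabla(d_\fz(p,\cdot))(z),\nabla(d_\fz(p,\cdot))(z)\rangle=1$. Put $\lz_0:=\max\{1,\|A(0)\|,\|A(0)^{-1}\|\}$, so that $\|A(0)^{1/2}\|\le\sqrt{\lz_0}$ and $\|A(0)^{-1/2}\|\le\sqrt{\lz_0}$. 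Given a compact set $K$ and a point $x\in\rn$, fix $R_0$ with $K\cup\{x\}\subset B(0,R_0)$ and set $R:=R_0(1+2\lz_0)$. Since $A$ is continuous at $0$, $\sup_{|z|\le R}\|A_j(z)-A(0)\|=\sup_{|w|\le Rr_j}\|A(w)-A(0)\|\to0$ as $j\to\fz$, and together with ellipticity this gives, for each $\ez>0$, an index $j(x,\ez,K)$ such that for all $j\ge j(x,\ez,K)$
\[
(1-\eta)\langle A(0)\xi,\xi\rangle\le\langle A_j(z)\xi,\xi\rangle\le(1+\eta)\langle A(0)\xi,\xi\rangle\qquad\text{for all }z\in B(0,R),\ \xi\in\rn,
\]
with $\eta=\eta(j)$ small enough that $\sqrt{1-\eta}\ge1-\ez$ and $(1-\eta)^{-1/2}\le1+\ez$.

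With this in hand, fix such a $j$ and any $y\in K$. For the lower bound I would test $d_j$ against $v(z):=\sqrt{1-\eta}\,\min\{d_\fz(y,z),\,d_\fz(x,y)\}$: wherever $\nabla v(z)\ne0$ one has $d_\fz(y,z)<d_\fz(x,y)$, hence $|z-y|\le\|A(0)^{1/2}\|\,d_\fz(y,z)<\lz_0|x-y|\le 2\lz_0R_0$, so that $z\in B(0,R)$; there the sandwich gives $\langle A_j(z)\nabla v(z),\nabla v(z)\rangle=(1-\eta)\langle A_j(z)\nabla(d_\fz(y,\cdot))(z),\nabla(d_\fz(y,\cdot))(z)\rangle\le(1-\eta)(1+\eta)\le1$, so $v$ is an admissible competitor for $d_j$ and $d_j(x,y)\ge v(x)-v(y)=\sqrt{1-\eta}\,d_\fz(x,y)\ge(1-\ez)d_\fz(x,y)$. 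For the upper bound, let $w$ be an arbitrary competitor for $d_j$; on $B(0,R)$ the sandwich gives $|A(0)^{1/2}\nabla w|\le(1-\eta)^{-1/2}$ a.e., and since the Euclidean segment $\gz(s)=y+s(x-y)$, $s\in[0,1]$, lies in $B(0,R_0)\subset B(0,R)$, integrating and using Cauchy--Schwarz gives
\[
w(x)-w(y)=\int_0^1\langle A(0)^{1/2}\nabla w(\gz(s)),\,A(0)^{-1/2}(x-y)\rangle\,ds\le(1-\eta)^{-1/2}|A(0)^{-1/2}(x-y)|=(1-\eta)^{-1/2}d_\fz(x,y),
\]
so $d_j(x,y)\le(1-\eta)^{-1/2}d_\fz(x,y)\le(1+\ez)d_\fz(x,y)$. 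Since $R$, $\eta$, and $j(x,\ez,K)$ were chosen independently of $y\in K$, this gives the asserted two-sided bound.

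The step I expect to require the most care is the last displayed integration: a priori $\nabla w$ is only in $L^\fz(B(0,R))$, so the fundamental theorem of calculus along the \emph{specific} segment $[x,y]$ is not immediate. As in the second half of the proof of Lemma~\ref{comp-dist} and in Lemma~\ref{l4.6}, I would handle this by the standard Fubini argument --- replace $x,y$ by nearby points $x_\theta,y_\theta\in B(0,R_0)$ for which $[x_\theta,y_\theta]$ meets the exceptional set of $\nabla w$ in one-dimensional measure zero, run the estimate there, and let $\theta\to0$ using the continuity of $w$ --- which leaves the final inequality unchanged. The only other thing to monitor is the bookkeeping: that the ball $B(0,R)$ can be taken independent of $j$ and that the error $\eta(j)\to0$ uniformly over $y\in K$, both of which are exactly what continuity of $A$ at $0$ supplies.
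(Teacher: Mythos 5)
Your proof of part~(i) is the same change-of-variables argument the paper gives; both directions are exactly what the authors sketch.

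Your part~(ii) is correct but proceeds along a genuinely different route from the paper, particularly for the upper bound. The paper argues symmetrically with truncated intrinsic distance functions: it takes $v_j(z)=\min\{d_j(x,z),R\}$ and $w(z)=\min\{d_\fz(x,z),R\}$, invokes Lemma~\ref{l2.1} to get $\langle A_j\nabla v_j,\nabla v_j\rangle\le 1$ a.e., uses the operator-norm continuity of $A$ at $0$ to transfer this to an \emph{additive} bound $\langle A(0)\nabla v_j,\nabla v_j\rangle\le L\ez+1$ (and symmetrically for $w$), and concludes $(L\ez+1)^{-1/2}d_\fz\le d_j\le(L\ez+1)^{1/2}d_\fz$. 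You instead use a \emph{multiplicative} sandwich $(1-\eta)A(0)\le A_j\le(1+\eta)A(0)$ on $B(0,R)$, which avoids introducing the auxiliary Lipschitz bound $L$ and is slightly cleaner bookkeeping. Your lower bound is in the same spirit as the paper's (a rescaled truncated limit-distance as a competitor for $d_j$), but your upper bound departs: you use the closed-form $d_\fz(p,q)=|A(0)^{-1/2}(p-q)|$ for the constant-coefficient distance, integrate an arbitrary competitor $w$ for $d_j$ along the Euclidean segment $[x,y]$, and finish with Cauchy--Schwarz. This sidesteps the second appeal to Lemma~\ref{l2.1} that the paper makes, at the cost of needing the Fubini/segment-perturbation argument that you correctly flag; the paper's symmetric construction buys uniformity with less analytic fuss on that point. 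Both are valid, and yours is self-contained modulo the standard Fubini step already used in the proof of Lemma~\ref{comp-dist}.
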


\begin{proof}[Proof of Lemma \ref{l5.2}.]
\noindent{\it Proof of {\rm(}i{\rm)}:} Let $j\in\nn$.
It suffices to show that for all open subsets $V\Subset \frac1{r_j}U$,
$\lip_{d_j}(u_j,\,V)=\lip_{d_j}(u_j,\,\partial V)$.
By Lemma \ref{l2.x2} (i) and observing that
   $x,\,y\in V$ implies $r_jx,r_jy\in r_jV\Subset U$, we have
   $$\frac{u_j(x)-u_j(y)}{d_j(x,\,y)}= \frac{u (r_jx)-u(r_jy)}{{d_A}(r_j x,\,r_j y)} ,$$
which   yields  $\lip_{d_j}(u_j,\,V)= \lip_{d_A}(u,\, {r_j}V)$.
Similarly, $\lip_{d_j}(u_j,\,\partial V)= \lip_{d_A}(u,\, \partial({r_j}V))$
with the help of $\partial{r_j}V=r_j\partial V$.
Thus by $\lip_{d_A}(u,\, {r_j}V)=\lip_{d_A}(u,\, \partial({r_j}V))$, we obtain
 $\lip_{d_j}(u_j,\,V)= \lip_{d_j}(u_j,\,\partial V) $. { Thus the claim follows from
 Lemma~\ref{l4.53}.}

{\it Proof of {\rm(}ii{\rm)}:}
It suffices to show that $u_\fz$ satisfies the  comparison property with cones.
Let $V\Subset\rn$ and assume that for each $z\in\partial V$,
\begin{equation}\label{e5.xx1}
u_\fz(z)\le b+ad_\fz(z_0,\,z)
\end{equation}
for some $ z_0\notin V$, $b\in\rr$ and $a>0$ { which are independent of $z$.}
By Lemma \ref{l2.x2} (ii), for every $\ez>0$, there exists  $j_\ez$ such that whenever $j\ge j_\ez$ and  $z\in \overline V $,
we have
$V\Subset \frac1{r_j}U$, $$(1-\ez) d_\fz(z_0,\,z)\le  {d_j(z_0,\,z)} \le (1+\ez)d_\fz(z_0,\,z)$$
and {because $u_j\to u_\infty$ uniformly on the compact set $\overline{V}$, we also have}
$$u_\fz(z)-\ez\le u_j(z)\le u_\fz(z)+\ez.$$
Thus by \eqref{e5.xx1},
$$u_j(z)\le (b+\ez)+{ \frac{a}{(1-\ez)}}d_j(z_0,\,z)$$
for all $z\in\partial V$.
Since $u_j$ is an absolute minimizer on $\frac1{r_j}U$ associated to $H_j$ and $V\Subset \frac1{r_j}U$, we have
$$u_j(z)\le (b+\ez)+{ \frac{a}{(1-\ez)}}d_j(z_0,\,z)$$
for all $z\in V$,
which further implies that
$$u_\fz(z)\le (b+2\ez)+{ \frac{a(1+\ez)}{(1-\ez)}}d_\fz(z_0,\,z)$$ for all $z\in V$.
Due to the arbitrariness of $\ez$,
we finally have  $u_\fz(z)\le b+ad_\fz(z_0,\,z)$ for all  $z\in V$.

Similar argument also holds for $-u_\fz$. We omit the details.
So, by Lemma \ref{l4.53}, $u_\fz$ is an absolute minimizer on $\frac1{r_j}U$ associated to the Hamiltonian $H_\fz$
{ for each $r_j$, and hence on $\mathbb{R}^n$.}
\end{proof}

\begin{proof}[Proof of Lemma \ref{l2.x2}.]
(i) Let { $v$ be a locally Lipschitz function on $U$ such that
$H(x,\nabla v(x))\le 1$ for almost every $x\in U$,}
and let $v_j(z)=\frac1{r_j}v(r_jz)$.
Since $H(z,\,\nabla v(z))\le 1$ for almost all $z\in\rn$,
we have   $$ H_j(z,\,\nabla v_j(z))=    H(r_j z,\,(\nabla v)(r_jz))\le 1 $$
and hence  $$d_j(x,\,y)\ge v_j(y)-v_j(x)$$
Taking the supremum over all such $v$, we see that
$r_j d_j(x,\,y)\ge {d_A}(r_jx,\,r_jy)$. The
 inequality $r_j d_j(x,\,y)\le {d_A}(r_jx,\,r_jy)$  can be obtained similarly.

(ii) Given a compact set $K$ and $x\in\rn$, let $R>0$ and $j_x\in\nn$ be such that
for all $j\ge j_x$, we have $K\cup\{x\}\subset B_{d_j}(0,\,R)\subset B_{d_j}(0,\,2R)\subset B (0,CR)\subset \frac1{r_j}U$,
where $C>1$ is a constant depending on the lower and upper bounds of $\lz$ on $U$.
We set $v_j(z)=\min\{d_j(x,\,z),\,R\}$ for all $z\in\rn$. Note that  by Lemma \ref{l2.1},
$\langle A_j(z)\nabla v_j(z),\nabla v_j(z)\rangle\le 1$ for all $z\in\rn$, and that
$ \nabla v_j(z) =0$ for $z\notin B(0,CR)$.
Moreover, since
$A$ is continuous at $0$, for sufficiently large $j_\ez>j_x$ we have that for all $z\in B (0,CR)$,
\[
   |A_j(z)-A(0)|=|A(r_jz)-A(0)|<\ez,
\]
where we consider the operator norm on $A_j(z)-A(0)$. So for almost every $z\in B(0,CR)$,
\[
   \langle A(0)\nabla v_j(z),\nabla v_j(z)\rangle =\langle [A(0)-A_j(z)]\nabla v_j(z),\nabla v_j(z)\rangle
       +\langle\nabla A_j(z)\nabla v_j(z),\nabla v_j(z)\rangle\le L\ez+1,
\]
where  $L>0$ is a constant
related to the bound of the ellipticity function $\lambda$ on $B(0,CR)$ such that $|\nabla v|\le L$
on $B(0,CR)$. It follows that
\[
  w_j=\frac{1}{\sqrt{L\ez+1}}\, v_j
\]
can be used to compute $d_\infty$ on $B(0,CR)$. Thus  for $y\in K\subset B(0,CR)$,
\[
  d_\infty(x,y)\ge w_j(x)-w_j(y)=\frac{v_j(x)-v_j(y)}{\sqrt{L\ez+1}},
\]
that is
\[
   d_\infty(x,y)\ge \frac{d_j(x,y)}{\sqrt{L\ez+1}}.
\]

Now let $w(z)=\min\{d_\fz(x,\,z),\, R\}$ for $z\in\rn$.
An argument similar to above yields  that for $j\ge j_\ez$,
\[
   \langle A_j(z)\nabla w(z),\nabla w(z)\rangle\le L\ez+1,
\]
and so we obtain the reverse inequality
\[
  d_j(x,y)\ge \frac{d_\infty(x,y)}{\sqrt{L\ez+1}}.
\]
The conclusion of~(ii) of the lemma follows.
\end{proof}

In what follows, $S^+_r u(x) $ is as in Section 4 and
 by Lemma~\ref{l4.52}, when $u$ is an absolute minimizer
{associated to the Hamiltonian $H$ that corresponds to $A$,}  
we know that $\lip_{d_A}u(x)=\lim_{r\to 0}S^+_ru(x)$. 

\begin{lem}\label{l5.3}Assume that $A$ is  continuous at $0$. Then 

{\rm (}i{\rm )} For all $r>0$,
{ $S^+_r u_\fz(0)=\lip_{d_\fz}u_\fz(0)
   =\lip_{d_A}u(0)$  and $\sup_{x\in\rn}S^+_r u_\fz(x)\le \lip_{d_\fz}u_\fz(0)$.}

{\rm (}ii{\rm )} $\sup_{x\in\rn}\lip_{d_\fz}u_\fz(x)=\lip_{d_\fz}(u_\fz,\,\rn)= \lip_{d_\fz}u_\fz(0).$

\end{lem}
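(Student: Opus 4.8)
The plan is to reduce both assertions to the scaling behaviour of the one-sided difference quotients $S^+_r$, combined with the monotonicity, limit, and continuity properties of $\rho\mapsto S^+_\rho u$ recorded in Lemma~\ref{l4.52} and in the proof of Proposition~\ref{c4.3}. Write $m=\lip_{d_A}u(0)$. The first step is to record the scaling identity
\[
S^+_r u_j(x)=S^+_{r_j r}\,u(r_j x)\qquad\text{for }j\in\nn,\ x\in\tfrac1{r_j}U,\ 0<r<\tfrac1{r_j}\,d_A(r_jx,\partial U),
\]
which follows at once from Lemma~\ref{l2.x2}(i): since the condition $d_j(z,x)\le r$ is equivalent to $d_A(r_jz,r_jx)\le r_jr$, one has $u^r_j(x)=\tfrac1{r_j}\,u^{r_jr}(r_jx)$, and subtracting $u_j(x)=\tfrac1{r_j}u(r_jx)$ and dividing by $r$ gives the claim.

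For part (i) I would first take $x=0$. For fixed $r>0$ and all large $j$, $S^+_ru_j(0)=S^+_{r_jr}u(0)$; as $j\to\fz$ we have $r_jr\to0$, and since $\rho\mapsto S^+_\rho u(0)$ is non-decreasing on $(0,d_A(0,\partial U))$ with $\lim_{\rho\to0}S^+_\rho u(0)=m$ by Lemma~\ref{l4.52}, the right-hand side tends to $m$. On the other hand, recalling that $u_j\to u_\fz$ locally uniformly and, by Lemma~\ref{l2.x2}(ii), $d_j\to d_\fz$ locally uniformly (here the continuity of $A$ at $0$ enters), a routine argument identifies $\lim_jS^+_ru_j(0)$ with $S^+_ru_\fz(0)$. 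Hence $S^+_ru_\fz(0)=m$ for every $r>0$; letting $r\to0$ and applying Lemma~\ref{l4.52} to the absolute minimizer $u_\fz$ (Lemma~\ref{l5.2}(ii)) gives $\lip_{d_\fz}u_\fz(0)=\lim_{r\to0}S^+_ru_\fz(0)=m=\lip_{d_A}u(0)$, which is the first part of~(i). For the supremum bound, fix $x\in\rn$ and $r>0$; as above $S^+_ru_\fz(x)=\lim_jS^+_{r_jr}u(r_jx)$. Given any small $\rho_0>0$, monotonicity in the radius yields $S^+_{r_jr}u(r_jx)\le S^+_{\rho_0}u(r_jx)$ for all large $j$ (for which $r_jr<\rho_0<d_A(r_jx,\partial U)$); since $S^+_{\rho_0}u$ is continuous near $0$ (proof of Proposition~\ref{c4.3}) and $r_jx\to0$, letting $j\to\fz$ gives $S^+_ru_\fz(x)\le S^+_{\rho_0}u(0)$, and then $\rho_0\to0$ gives $S^+_ru_\fz(x)\le m=\lip_{d_\fz}u_\fz(0)$. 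Taking the supremum over $x$ completes~(i).

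Part (ii) then follows quickly. By Lemma~\ref{l4.52} applied to $u_\fz$, for every $x$ we have $\lip_{d_\fz}u_\fz(x)=\inf_{r>0}S^+_ru_\fz(x)\le S^+_ru_\fz(x)$, so taking the supremum over $x$ and using the bound just proved in~(i) gives $\sup_{x\in\rn}\lip_{d_\fz}u_\fz(x)\le\lip_{d_\fz}u_\fz(0)$, while the reverse inequality is trivial. Finally, since $A(0)$ is a constant positive definite matrix, $d_\fz$ is a translation-invariant metric whose geodesics are Euclidean segments, so $\lip_{d_\fz}u_\fz$ is an upper gradient of $u_\fz$ and hence $\lip_{d_\fz}(u_\fz,\rn)=\sup_{x\in\rn}\lip_{d_\fz}u_\fz(x)$ (one may instead apply Lemma~\ref{l4.6} on an exhaustion of $\rn$ by balls together with Lemma~\ref{l2.1}); combining the displayed identities gives all of~(ii).

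I expect the main obstacle to be the limit passage defining $S^+_ru_\fz$: because the intrinsic metrics $d_j$ depend on $j$, one cannot merely invoke the uniform convergence $u_j\to u_\fz$; one must couple it with the metric convergence $d_j\to d_\fz$ of Lemma~\ref{l2.x2}(ii) — precisely the place where continuity of $A$ at $0$ is used — in order to pass $\sup_{d_j(\cdot,x)\le r}u_j$ to $\sup_{d_\fz(\cdot,x)\le r}u_\fz$. Once this convergence, the monotonicity of $\rho\mapsto S^+_\rho u$, and the continuity of $S^+_{\rho_0}u$ near $0$ are in hand, the remaining steps are bookkeeping.
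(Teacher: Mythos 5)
Your proof is correct, and it takes a cleaner route than the paper's for part of the argument. Both you and the authors rely on the same core ingredients — Lemma~\ref{l2.x2} (scaling and convergence of the intrinsic distances), Lemma~\ref{l4.52} (monotonicity and limit of $S^+_\rho u$), Lemma~\ref{l5.2} ($u_\fz$ is an absolute minimizer for $H_\fz$), and the continuity of $S^+_{\rho_0}u$ near the origin from the proof of Proposition~\ref{c4.3} — and for the lower bound $S^+_ru_\fz(0)\ge\lip_{d_A}u(0)$ the two arguments are essentially the same idea, yours simply encapsulating the bookkeeping into the exact scaling identity $S^+_ru_j(x)=S^+_{r_jr}u(r_jx)$ where the paper instead tracks explicit maximizing points $z_j$ and pushes $\varepsilon$'s around. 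Where you genuinely diverge is in the bound $\sup_{x}S^+_ru_\fz(x)\le\lip_{d_A}u(0)$: the paper first establishes the global Lipschitz estimate $\lip_{d_\fz}(u_\fz,\rn)\le\lip_{d_A}u(0)$ by integrating $\lip_{d_A}u$ along $d_A$-geodesics and invoking the upper semicontinuity of $\lip_{d_A}u$, and then reads the $S^+_r$ bound off that; you instead use the scaling identity once more together with the monotonicity of $\rho\mapsto S^+_\rho u$ and the continuity of $S^+_{\rho_0}u$ at $0$, which is more uniform in style, needs no geodesics, and sits closer to the definition of $S^+_r$. The one place you should be more explicit is the limit passage $\lim_jS^+_ru_j(x)=S^+_ru_\fz(x)$, which you correctly flag as the crux: it requires sandwiching the $d_j$-ball of radius $r$ between $d_\fz$-balls of radii $r/(1\pm\varepsilon)$ via Lemma~\ref{l2.x2}(ii), using uniform convergence of $u_j$ on the larger (fixed, compact) $d_\fz$-ball, and then letting $\varepsilon\to0$ using the Lipschitz continuity of $\rho\mapsto\sup_{\overline B_{d_\fz}(x,\rho)}u_\fz$; spelled out, this is fine. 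For~(ii), your invocation of integration along segments to get $\lip_{d_\fz}(u_\fz,\rn)=\sup_x\lip_{d_\fz}u_\fz(x)$ is valid (this is exactly the argument in the proof of Lemma~\ref{l4.50}), while the paper instead deduces~(ii) directly from~(i) together with Lemma~\ref{l4.52}; either works.
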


\begin{proof}
{ By Lemma~\ref{l2.x2}, we know that $u_\fz$ is an absolute minimizer associated with
$H_\fz$. Hence by Lemma~\ref{l4.52} and the claim~(i) of this lemma, the claim~(ii) will follow.
Hence it suffices to prove the claim~(i).}
We first observe that
\begin{equation}\label{e5.x1}
\lip_{d_\fz}(u_\fz,\,\rn)\le { \lip_{d_A}u(0)}.
\end{equation}
Indeed, for all $x,\,y\in\rn$ with $x\ne y$, by $r_j d_j(x,\,y)= {d_A}(r_j x,\,r_j y)$, we have
\begin{eqnarray}\label{eq:A}
 \frac{|u_\fz(x)-u_\fz(y)| }{d_\fz(x,\,y)}
 &&=\lim_{j\to\fz}\frac{| u_j(x)-u_j(y) |}{d_j(x,\,y)}
 = \lim_{j\to\fz}\frac{ |u (r_j x)-u (r_j y)|}{{d_A}(r_j x,\,r_j y)}.
\end{eqnarray}
letting $\gz$ be the geodesic
 curve { in the metric $d_j$ (and hence in the metric $d_A$)}
 joining $r_jx,\,r_jy$,  (such $\gz$ exists when $j$ large enough because then $r_jx,r_jy\in B(0,R)$),
we obtain
$$|u (r_j x)-u (r_j y)|\le \int_\gz \lip_{d_A} u(z)|dz|\le\sup_{z\in\gz}{ \lip_{d_A}}u(z)\, {d_A}(r_jx,\,r_jy).$$
Thus
 $$ \frac{|u_\fz(x)-u_\fz(y)|}{d_\fz(x,\,y)} \le\lim_{j\to\fz}\sup_{z\in B_{d_A}(0,\,{d_A}(r_j x,\,r_j y))}{ \lip_{d_A}u(z).}$$
Observing that { $\lip_{d_A}u$} is upper semicontinuous (for details see  the proof of Proposition \ref{c4.3},
{ in particular,~\eqref{e4.xx6}}),
and by ${d_A}(r_jx,\,r_jy)\to0$ as $j\to\fz$,
we arrive at
$$ \frac{|u_\fz(x)-u_\fz(y)|}{d_\fz(x,\,y)} { \le \lip_{d_A}} u(0).$$
{ This proves~\eqref{e5.x1}.}

 From \eqref{eq:A} and Lemma \ref{l4.52}, it also follows that
${ \lip_{d_\fz}} u_\fz(x)\le S^+_r u_\fz(x)\le { \lip_{d_A}}u( 0)$ for all $x\in\rn$ and $r>0$.
Moreover, we {will show below that}    
\begin{equation}\label{e5.xx9}    
S^+_ru_\fz(0){ \ge \lip_{d_A}}u(0).    
\end{equation}    
{ From this, and applying the above discussion to $x=0$, by Lemma \ref{l4.52} we have}
$$S^+_ru_\fz(0)={ \lip_{d_\fz}} u_\fz(0)= { \lip_{d_A}}u(0)$$ for all $r>0$.
Since we already have
{ $\lip_{d_\fz}u_\fz(x)\le \lip_{d_\fz}(u_\fz,\mathbb{R}^n)$ for all $x\in\mathbb{R}^n$,}
we obtain (i).
This proves  {
Lemma \ref{l5.3}. 
}
Hence we end the proof of Lemma~\ref{l5.3} by establishing \eqref{e5.xx9}.

 By the continuity of { $u_\fz$,} for every $\ez>0$ there exists $0<\dz_0<1/4$ such that
 whenever $\dz\in(0,\,\dz_0)$,
\begin{equation}\label{e2.x6}
\frac{\sup_{d_\fz(0,\,y)\le r}u_\fz(y)}{r}\ge \frac{\sup_{d_\fz(0,\,y)\le (1+\dz)r}u_\fz(y)}{(1+\dz)r} -\ez
  = S^+_{(1+\dz)r}u_\fz(0)-\ez;
\end{equation}
 and since $u_j\to u_\fz$ locally uniformly as $j\to\fz$, there exists  $j_\ez\in\nn$
 such that
 for all $j\ge j_\ez$ and $y\in   B_{d_\fz}(0,\,2r)\setminus B_{d_\fz}(0,\,r/2)$,
\begin{equation}\label{e5.xx6}
 \frac{ u_\fz(y)-u_\fz(0)}{d_\fz(0,\,y)}
 \ge \frac{u_j(y)-u_j(0)}{ d_j(0,\,y)} -\ez
  = \frac{u(r_jy)-u (0)}{{d_A}(0,\,r_j y)}-\ez .
\end{equation}
Moreover, by Lemma 5.4 (ii), for any $\dz\in(0,\,\dz_0)$, there exists $j_\dz$ such that
 such that for all $j\ge j_\dz$ and $y\in B_{d_\fz}(0,\,2r)$,
\begin{equation}\label{e5.xx7}(1-\dz){d_j}(0,\,y)\le d_\fz(0,\,y) \le (1+\dz){d_j}(0,\,y).
\end{equation}

Let $z_j\in\rn$ such that  $d_A(0,\, z_j)\le r_jr$ and
\begin{equation}\label{e5.xx8}\frac{u(z_j)-u (0)}{r_jr}=\frac{\max_{{d_A}(z,\,0)\le r_jr}u(z)-u(0)}{r_jr}
=S^+_{r_jr}u(0)\ge { \lip_{d_A}}u(0).\end{equation}
By {the comparison Lemma~\ref{l4.9},} we know that $d_A(0,\, z_j)= r_jr$.
Let $y_j=z_j/r_j$.
Observe that  whenever $j\ge j_\dz$, \eqref{e5.xx7} implies that
$$ d_\fz(0,\, y_j)\le (1+\dz)d_j(0,\,  y_j)\le (1+\dz)\frac1{r_j}d_A(0,\,  z_j)=(1+\dz) r$$
and similarly,  $(1-\dz)r\le d_\fz(0,\,y_j).$
By this, the increasing property of $S^+_ru_\fz(0)$
with respect to $r$ given by Lemma \ref{l4.52}, \eqref{e5.xx6}, Lemma 5.4 (i) and \eqref{e5.xx8}, we have
whenever $j\ge \max\{j_\dz,\,j_\ez\}$,
\begin{eqnarray*}
 S^+_{(1+\dz)r}u_\fz(0)&&\ge S^+_{d_\fz(0,\,y_j)}u_\fz(0)
 \ge
  \frac{ u_\fz(y_j)-u_\fz(0)}{d_\fz(0,\,y_j)}\\
  &&
    \ge
  \frac{u_j(y_j)-u_j(0)}{{d_j}(0,\,y_j  )}-\ez
=
  \frac{u(z_j)-u (0)}{{d_A}(0,\,z_j  )}-\ez\ge { \lip_{d_A}}u(0)-\ez,
\end{eqnarray*}
which together with \eqref{e2.x6}
implies $S^+_{ r}u_\fz(0)\ge { \lip_{d_A}}u(0)-2\ez.$ From this, we conclude that
$S^+_ru_\fz(0)\ge S^+u(0)$, and hence \eqref{e5.xx9}.
  \end{proof}

\begin{lem}\label{l5.4}
Assume that $A$ is continuous at $0$.
There exists  ${\bf e}\in\rn$ such that
$u_\fz(x)=\langle {\bf e},\,x\rangle$
for all $x\in\rn$ and { $H_\fz(x,{\bf e})=\lip_{d_\fz}u_\fz(0)$.}
\end{lem}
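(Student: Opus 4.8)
The plan is to show that the blow--down limit $u_\fz$ is an affine function, using only the rigid description of its slopes supplied by Lemma~\ref{l5.3} together with the elementary geometry of the constant matrix $A(0)$. Since $A(0)$ is constant, symmetric and positive definite, a direct computation (as in the proof of Lemma~\ref{l2.2}) gives $d_\fz(x,y)=|A(0)^{-1/2}(x-y)|$, so that $\Psi(\zeta)=A(0)^{1/2}\zeta$ is a linear isometry of $(\rn,|\cdot|)$ onto $(\rn,d_\fz)$, mapping Euclidean $r$--balls to $d_\fz$--balls of radius $r$. Set $v=u_\fz\circ\Psi$ and $L=\lip_{d_\fz}u_\fz(0)$. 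If $L=0$ then $v$ is constant and the lemma is trivial, so assume $L>0$. Then $v(0)=u_\fz(0)=0$ (by the running normalization $u(0)=0$), and since the relevant quantities are metric notions preserved by $\Psi$, Lemma~\ref{l5.3}(ii) says that $v$ is $L$--Lipschitz on $\rn$ for the Euclidean metric, while Lemma~\ref{l5.3}(i) gives $S^+_rv(0)=S^+_ru_\fz(0)=L$, i.e. $\max_{\overline B(0,r)}v=Lr$, for every $r>0$.

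First I would get the matching lower bound. Since $-u$ is again an absolute minimizer on $U$ (the defining condition is symmetric under $u\mapsto-u$, because $H(x,\xi)=H(x,-\xi)$; alternatively use Lemma~\ref{l4.53}(iii)), with $(-u)(0)=0$ and $\lip_{d_A}(-u)(0)=L>0$, Lemma~\ref{l5.3}(i) applied to $-u$ in place of $u$ yields $S^+_r(-u_\fz)(0)=L$ for all $r>0$, hence $\min_{\overline B(0,r)}v=-Lr$ for all $r>0$. In particular $-L|z|\le v(z)\le L|z|$ for all $z$. Choosing $p_r,q_r$ with $v(p_r)=Lr$ and $v(q_r)=-Lr$ forces $|p_r|=|q_r|=r$, and then $2Lr=v(p_r)-v(q_r)\le L|p_r-q_r|\le 2Lr$ forces $|p_r-q_r|=2r$; by strict convexity of the Euclidean ball this gives $q_r=-p_r$. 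Running the Lipschitz bound along the diameter $[-p_r,p_r]$ then shows $v(t\hat e_r)=Lt$ for $t\in[-r,r]$, where $\hat e_r:=p_r/r\in S^{n-1}$.

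Next I would check that $\hat e_r$ does not depend on $r$: for $r_1\le r_2$ one has $L(r_1+r_2)=v(r_1\hat e_{r_1})-v(-r_2\hat e_{r_2})\le L|r_1\hat e_{r_1}+r_2\hat e_{r_2}|\le L(r_1+r_2)$, so strict convexity again forces $\hat e_{r_1}=\hat e_{r_2}=:\hat e$, and therefore $v(t\hat e)=Lt$ for all $t\in\rr$. Finally, writing an arbitrary $\zeta$ as $\zeta=s\hat e+\zeta^\perp$ with $s=\langle\hat e,\zeta\rangle$ and $\zeta^\perp\perp\hat e$, the Lipschitz estimates from the points $t\hat e$ read $Lt-L\sqrt{(s-t)^2+|\zeta^\perp|^2}\le v(\zeta)\le Lt+L\sqrt{(s-t)^2+|\zeta^\perp|^2}$; using $a\le\sqrt{a^2+b^2}\le a+b^2/(2a)$ for $a>0$ and letting $t\to+\infty$ in the left inequality and $t\to-\infty$ in the right one, both sides tend to $Ls$, so $v(\zeta)=L\langle\hat e,\zeta\rangle$. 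Thus $v$ is linear; undoing the isometry, $u_\fz(x)=v(A(0)^{-1/2}x)=\langle {\bf e},x\rangle$ with ${\bf e}:=L\,A(0)^{-1/2}\hat e$, and since $|\hat e|=1$ one computes $H_\fz(x,{\bf e})=\langle A(0){\bf e},{\bf e}\rangle=L^2=(\lip_{d_\fz}u_\fz(0))^2$, which is the asserted identity.

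There is no genuinely hard step here once Lemma~\ref{l5.3} is in hand; the only thing to watch is the bookkeeping with the non--Euclidean blow--down metric $d_\fz$. Recognizing that $(\rn,d_\fz)$ is isometric to ordinary Euclidean space reduces the whole statement to the elementary rigidity fact that a globally $L$--Lipschitz function on $\rn$ whose value at one point realizes the slope $L$ at every scale, both from above and from below, must be affine --- and this in turn is just the triangle inequality together with the strict convexity of balls.
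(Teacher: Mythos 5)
Your proof is correct and it is genuinely different in spirit from what the paper does. The paper's own argument is very short: it records that $u_\fz$ is an absolute minimizer for $H_\fz$ satisfying the rigid slope identities of Lemma~\ref{l5.3}, and then cites Crandall--Evans--Gariepy \cite{ceg} when $A(0)=I_n$ and Wang--Yu \cite[Lemma 3.4]{wy} for general constant $A(0)$. You instead make the whole step self-contained: you reduce the general $A(0)$ case to the Euclidean one by observing that $d_\fz(x,y)=|A(0)^{-1/2}(x-y)|$, so that $\Psi=A(0)^{1/2}$ is a global isometry carrying Euclidean balls to $d_\fz$-balls, and then you re-prove the Crandall--Evans--Gariepy rigidity fact from scratch (an $L$-Lipschitz function on $\rn$ whose max and min over every centred ball both saturate the Lipschitz bound must be affine) using only the strict convexity of Euclidean balls. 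The bookkeeping is accurate: translating Lemma~\ref{l5.3}(ii) through $\Psi$ gives the global Euclidean $L$-Lipschitz bound for $v=u_\fz\circ\Psi$, translating Lemma~\ref{l5.3}(i) gives $\max_{\overline B(0,r)}v=Lr$, and applying Lemma~\ref{l5.3}(i) to $-u$ (legitimate since $H(x,-\xi)=H(x,\xi)$ makes $-u$ an absolute minimizer with $\lip_{d_A}(-u)(0)=L>0$ and the same subsequence has blow-down $-u_\fz$) gives $\min_{\overline B(0,r)}v=-Lr$. The equality-in-triangle-inequality and the $t\to\pm\infty$ squeeze are clean. Your final computation $H_\fz({\bf e})=\langle A(0){\bf e},{\bf e}\rangle=L^2$ also (silently but correctly) repairs a typo in the paper's statement, which should read $H_\fz({\bf e})=(\lip_{d_\fz}u_\fz(0))^2$ rather than $H_\fz(x,{\bf e})=\lip_{d_\fz}u_\fz(0)$. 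What your approach buys is a reader who does not have to unpack the hypotheses of \cite[Lemma 3.4]{wy}; what it costs is about a page of argument where the paper has two lines of citation.
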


\begin{proof}
Notice that by Lemma \ref{l5.2},
$u_\fz$ is an absolute minimizer on $\rn$ associated to the Hamiltonian $H_\fz$.
Moreover $u_\fz$ satisfies Lemma \ref{l5.3} (i) and (ii).
If $A(0)=I_n$, then $H_\fz(\xi)=\langle \xi,\,\xi\rangle$ and hence Lemma \ref{l5.4}
follows from  \cite{ceg}. Generally, Lemma \ref{l5.4} follows from Lemma 3.4 of \cite{wy},
where  $H_\fz$ satisfies the conditions required there.
\end{proof}

\begin{proof}[Proof of Theorem \ref{t5.1}]
Without loss of generality, we may assume that $x=0$, $u(x)=0$ and $\lip_{d_A}u(0)>0$.
Indeed, set $\wz u(z)=u(x+z)-u(x)$ for $z\in\rn$, and $\wz H(z,\,\xi)=\langle A(x+z)\xi,\,\xi\rangle$.
Then $\wz u$ is an absolute minimizer of $\wz H$ if and only if $ u$ is an absolute minimizer of $ H$;
 Theorem \ref{t5.1} holds for $\wz u$ at $0$ if and only if Theorem \ref{t5.1} holds for $  u$ at $x$.
But $\wz u(0)=0$.
We also notice that  $\lip_{d_A}u(0)=0$  together with the equivalence
of $d$ and the Euclidean distance yields that  $\lip\, u(0)=0$, and hence, $u$ is differentiable at $0$ with $\nabla u(0)=0$.
This  means that \eqref{e2.x5} holds with ${\bf e}=0$
and $  \lip_{d_A} u(0)=H(0,\,{\bf e})=0.$

Now we consider the scaling of the absolute minimizer $u$ by $u_j(y)=\frac{u(r_j y)}{r_j} $ as above.
$u_\fz$ is the limit of some subsequence  of $u_j$, which is still denoted by $u_j$ for simple.
Then \eqref{e2.x5} is reduced to showing
$u_\fz(z)=\langle {\bf e},\,z\rangle$ for some vector ${\bf e}\in\rn$ and
$H_\fz({\bf e})=\lip_{d_A}u(0)$. But this follows from Lemma \ref{l5.4} and Lemma \ref{l5.3}.
\end{proof}

\begin{rem}\rm
(i) In the above proof, we do need the  continuity of $A$ at $x$
to conclude that $d_\fz$ is the intrinsic distance associated to $H_\fz$.
We do not know what happens if $A$ is only assumed to be weak upper
semicontinuous at $x.$

(ii)
We expect that the above linear approximation property provided by
Theorem  \ref{t5.1}  may help to understand
the $C^1$-regularity or the differentiability everywhere of the
absolute minimizer
associated to a continuous diffusion matrix $A$, see
\cite{s05,es,es12,wy} in the case $A=I_n$.

\end{rem}

\noindent Pekka Koskela

\noindent Department of Mathematics and Statistics,
P. O. Box 35 (MaD),
FI-40014, University of Jyv\"askyl\"a,
Finland
\smallskip

\noindent{\it E-mail }:   \texttt{pkoskela@maths.jyu.fi}

\bigskip

\noindent Nageswari Shanmugalingam   

\noindent Department of Mathematical Sciences, P.O.Box 210025, University of Cincinnati,
 Cincinnati, OH 45221-0025, U.S.A.

\smallskip

\noindent {\it E-mail}: \texttt{shanmun@uc.edu}  

\bigskip

\noindent Yuan Zhou

\noindent
Department of Mathematics, Beijing University of Aeronautics and Astronautics, Beijing 100191, P. R. China

\noindent{\it E-mail }:  \texttt{yuanzhou@buaa.edu.cn}


\end{document}